\definecolor{labelkey}{rgb}{0.6,0,1}
\newtheorem{assumption}{Assumption}
\newtheorem{corollary}{Corollary}
\newtheorem{definition}{Definition}
\newtheorem{example}{Example}
\newtheorem{lemma}{Lemma}
\newtheorem{proposition}{Proposition}
\newtheorem{theorem}{Theorem}
\numberwithin{assumption}{section}
\numberwithin{corollary}{section}
\numberwithin{equation}{section}
\numberwithin{definition}{section}
\numberwithin{example}{section}
\numberwithin{lemma}{section}
\numberwithin{proposition}{section}
\numberwithin{remark}{section}
\numberwithin{theorem}{section}
\renewcommand{\leq}{\leqslant}
\renewcommand{\le}{\leqslant}
\renewcommand{\ge}{\geqslant}
\newcommand{\clr}{\mathrm{clr}}
\DeclareMathOperator{\argmin}{argmin}
\DeclareMathOperator{\prop2}{\,\propto\,}
\journal{arXiv}
\begin{document}
\begin{frontmatter}

\title{Orthogonal decomposition of multivariate densities \\
in Bayes spaces and its connection with copulas}

\author[label1]{Christian Genest}
\address[label1]{McGill University, Montr\'eal (Qu\'ebec) Canada}

\author[label2]{Karel Hron}
\address[label2]{Palack\'y University Olomouc, Olomouc, Czech Republic}

\author[label1]{Johanna G. Ne$\check{\rm s}$lehov\'a}

\begin{abstract}
Bayes spaces were initially designed to provide a geometric framework for the modeling and analysis of distributional data. It has recently come to light that this methodology can be exploited to provide an orthogonal decomposition of bivariate probability distributions into an independent and an interaction part. In this paper, new insights into these results are provided by reformulating them using Hilbert space theory and a multivariate extension is developed using a distributional analog of the Hoeffding--Sobol identity. A connection between the resulting decomposition of a multivariate density and its copula-based representation is also provided. 
\end{abstract}

\begin{keyword}
Copulas \sep 
Bayes spaces \sep
Dependence structure \sep
Hilbert space \sep
Orthogonal decomposition

\end{keyword}

\end{frontmatter}

\section{Introduction\label{sec:1}}

In analyzing compositional data and, more generally, observations expressible as a function defined on a finite set or on an interval, the interest often focuses on the shape of the response, in recognition of the fact that the information carried by the observed function is relative rather than absolute in nature. It is then natural to cast the analysis in a space of functions in which maps $f$ and $c \times f$ for arbitrary strictly positive real number $c \in (0, \infty)$ belong to the same equivalence class.

The notion of a Bayes space, reviewed in Section~\ref{sec:2}, is tailor-made for this purpose. Originally introduced by \cite{Egozcue/etal:2006} with a linear space structure, it was further extended to a full Hilbert space by \cite{Boogaart/etal:2010}, \cite{Egozcue/etal:2013}, and \cite{Boogaart/etal:2014}, who endowed it with a notion of scalar product which is applicable to any strictly positive function whose logarithm is square integrable with respect to a reference measure $\lambda$ on a space $\Omega$. They also highlighted the fact that through a centered log ratio (clr) transformation, also recalled in Section~\ref{sec:2}, a Bayes space $\mathcal{B}^2 (\Omega)$ is isomorphic to the classical $L_0^2 (\Omega)$ space of square-integrable functions which integrate to zero on $\Omega$.

Starting with the work of \cite{Delicado:2011}, Bayes spaces have slowly emerged as a tool for the analysis of probability density functions on the real line. See, among others, \cite{Hron/etal:2016}, \cite{Talska/etal:2018,Talska/etal:2020,Talska/etal:2021} for applications to age distribution, newborn screening, income distribution, and particle size distribution, respectively. See also \cite{Seo/Beare:2019} for an analysis of wage data, as well as \cite{Menafoglio/etal:2014,Menafoglio/etal:2016,Menafoglio/etal:2018,Menafoglio/etal:2021} for numerous applications in geosciences. 

In their recent review of Bayes space methodology and alternative techniques for the analysis of probability density functions as data objects, \cite{Petersen/etal:2022}  mentioned that multivariate densities are also frequently encountered in practice and that extending the methods they reviewed to this setting is a non-trivial task. As these authors point out, the Bayes space representation provides a sound theoretical base for this task. Indeed \cite{Hron/etal:2020} recently showed that on the Bayes space $\mathcal{B}^2 (\Omega_1 \times \Omega_2)$ endowed with a product reference measure $\lambda = \lambda_1 \times \lambda_2$ which is a probability measure, any element can be decomposed into three mutually orthogonal terms, namely two geometric margins whose direct sum constitutes the independence part, and a third term representing an interaction. Their paper also includes an illustration to bivariate anthropometric data.

In this paper, the work of \cite{Hron/etal:2020} is revisited and generalized in two different ways. After a summary of basic facts about general Bayes spaces in Section~\ref{sec:2}, the bivariate approach of \cite{Hron/etal:2020} is extended in Section~\ref{sec:3} to arbitrary finite reference measure $\lambda$, and the terms of the decomposition are shown to correspond to projections in mutually orthogonal subspaces. In Section~\ref{sec:4}, a $d$-variate extension of the orthogonal decomposition of \cite{Hron/etal:2020} is then obtained from an analog of the Hoeffding--Sobol decomposition due to \cite{Kuo/etal:2010}; see also \cite{Mercadier/etal:2022}. As in the bivariate case, the decomposition is proved to be a direct sum of projections into mutually orthogonal subspaces of $\mathcal{B}^2 (\lambda)$. The $2^d - 1$ terms in the general decomposition are akin to $d$ main effects and $2^d - d -1$ multi-way interactions in analysis of variance.

In Section~\ref{sec:5}, two choices of reference measure $\lambda$ are considered. On the one hand, the use of Lebesgue measure elucidates the connection between the geometric margins of the Bayes space decomposition of a probability density function and its margins. On the other hand, the notion of Fr\'echet space of multivariate probability distributions \citep{Joe:1997} suggests the use of the product of the marginal distributions as an alternative reference measure $\lambda$. This leads to an orthogonal decomposition of the unique underlying copula density associated to an element of the Bayes space. As an illustration of the insights provided by this reduction, it is shown that the only non-trivial terms in the orthogonal decomposition of the multivariate Gaussian copula are those which correspond to 
the main effects and two-way interactions. Conclusions and perspectives for future work can be found in Section~\ref{sec:6}.

\section{Bayes spaces\label{sec:2}}

This section gives an overview of the theory of Bayes spaces developed by \cite{Boogaart/etal:2010,Boogaart/etal:2014} and summarizes the key results needed for the orthogonal decomposition of probability densities initiated by~\cite{Hron/etal:2020} in the bivariate case and generalized here.

Consider an arbitrary measurable space $(\Omega, \mathcal{A})$ and a $\sigma$-finite, positive, real-valued reference measure $\lambda$ on $(\Omega, \mathcal{A})$. Let $\mathcal{M}(\lambda)$ be the set of all $\sigma$-finite, positive, real-valued measures $\mu$ on $(\Omega,\mathcal{A})$ which are equivalent to $\lambda$, meaning that for all $A \in \mathcal{A}$,
\[
\mu(A) = 0 \quad \Longleftrightarrow \quad \lambda(A) = 0.
\]

Motivated by the fact that proportional likelihoods carry the same amount of information, \cite{Boogaart/etal:2010} set, for any two measures $\nu$, $\mu \in \mathcal{M}(\lambda)$,
\begin{equation}
\label{eq:2.1}
\nu \prop2 \mu \quad \Longleftrightarrow \quad \exists_{c \in (0, \infty)} \quad \forall_{A \in \mathcal{A}} \quad \nu(A) = c \, \mu(A).
\end{equation}
This relation being reflexive, symmetric, and transitive, it defines an equivalence relation on $\mathcal{M}(\lambda)$.

\begin{definition}
\label{def:2.1}
The Bayes space $\mathcal{B}(\lambda)$ is the quotient space $\mathcal{M}(\lambda)|{\prop2}$, i.e., the space of equivalence classes of measures in $\mathcal{M}(\lambda)$ with respect to the relation~$\prop2$. 
\end{definition}

\subsection{Operations on $\mathcal{B}(\lambda)$\label{sec:2.1}}

In their paper, \cite{Boogaart/etal:2010}  introduce two operations on $\mathcal{M}(\lambda)$, namely perturbation $\oplus$ and powering $\odot$. The latter are inspired by analogous operations on the simplex that give rise to the Aitchison geometry underlying the analysis of compositional data.

\begin{definition}
\label{def:2.2}
Given arbitrary measures $\mu$, $\nu \in \mathcal{M}(\lambda)$, let $f_\mu$, $f_\nu$ denote their Radon--Nikodym derivatives with respect to $\lambda$. Then, for any $A \in \mathcal{A}$ and $\alpha \in \mathbb{R}$,
\begin{equation}
\label{eq:2.2}
(\mu \oplus \nu) (A) = \int_A f_\mu f_\nu  d \lambda, \quad (\alpha \odot \mu) (A) = \int_A (f_\mu)^\alpha d \lambda.
\end{equation}
By convention, $\mu \ominus \nu = \mu \oplus ((-1) \odot \nu))$.
\end{definition}

\cite{Boogaart/etal:2010} show that $\mu \oplus \nu$ and $\alpha \odot \mu$ are elements of $\mathcal{M}(\lambda)$. They also prove, inter alia, that $(\mathcal{B}(\lambda), \oplus, \odot)$ is a real vector space. 

\subsection{Interpretation in terms of $\lambda$-densities\label{sec:2.2}}

Because each measure $\mu \in \mathcal{M}$ can be identified with its Radon--Nikodym derivative $f_\mu$ with respect to $\lambda$, or $\lambda$-density for short, the space $(\mathcal{B}(\lambda), \oplus, \odot)$ can be understood as the quotient space of $\lambda$-densities of measures in $\mathcal{M}(\lambda)$ with respect to the equivalence relation $\prop2$. The analog of Eq.~\eqref{eq:2.1} is then
\[
f_\mu \prop2 f_\nu \quad \Longleftrightarrow \quad \exists_{c \in (0, \infty)} \quad f_\mu = c f_\nu. 
\]
The perturbation and powering operations are then understood as operations on $\lambda$-densities, i.e., for any $\alpha \in \mathbb{R}$,
\[
f_\mu \oplus f_\nu \prop2 f_\mu f_\nu, \quad \alpha \odot f_\mu \prop2 (f_\mu)^{\alpha}.
\]
which is the analog of Eq.~\eqref{eq:2.1}. Because of this equivalence, the measure- and density-based characterizations of the Bayes space $\mathcal{B}(\lambda)$ will be used interchangeably.

\subsection{Hilbert space structure\label{sec:2.3}}

For the application of Bayes spaces considered in this paper, it is important to endow $\mathcal{B}(\lambda)$ with a scalar product that would turn it into a Hilbert space. However, two restrictions need to be made to accomplish this, namely
\begin{itemize}
\item [(i)]
$\lambda$ needs to be a finite measure;
\item [(ii)]
$\mathcal{B}(\lambda)$ needs to be restricted to the subspace $\mathcal{B}^2 (\lambda)$ of $\lambda$-densities whose logarithm is square integrable, viz.
\[
\mathcal{B}^2 (\lambda) = \left\{ f_\mu \in \mathcal{B}(\lambda) \,{\Big |} \int_\Omega \left|\ln (f_\mu) \right|^2  d \, \lambda < \infty \right\}.
\]
\end{itemize}
These assumptions are henceforth taken to hold.

By Proposition~1 in \cite{Boogaart/etal:2014}, $(\mathcal{B}^2 (\lambda),\oplus, \odot)$ is a vector subspace of $(\mathcal{B}(\lambda),\oplus, \odot)$. Having made these restrictions, one can define a scalar product on $\mathcal{B}^2 (\lambda)$ as proposed by~\cite{Talska/etal:2020}. 

\begin{definition}
\label{def:2.3}
For $f_\mu$, $f_\nu \in \mathcal{B}^2 (\lambda)$, set
\begin{equation}
\label{eq:2.3}
\langle f_\mu,f_\nu\rangle_{\mathcal{B}^2 (\lambda)} = \frac{1}{2\mathtt{\lambda}(\Omega)} \int_\Omega \int_\Omega \ln \left\{ \frac{f_\mu(\omega_1)}{f_\mu(\omega_2)} \right\} \ln \left\{ \frac{f_\nu(\omega_1)}{f_\nu(\omega_2)} \right\} d \,\lambda(\omega_1) d\, \lambda(\omega_2).
\end{equation}
\end{definition}

It then follows that $(\mathcal{B}^2 (\lambda), \langle \cdot, \cdot \rangle_{\mathcal{B}^2 (\lambda)})$ is a separable Hilbert space, as stated in Theorem~1 of \cite{Boogaart/etal:2014}, albeit with a somewhat different definition of the scalar product involving the factor $1/\{2\lambda^2(\Omega)\}$ instead of $1/\{2\lambda(\Omega)\}$. The change in definition proposed by \cite{Talska/etal:2020} extends the ideas of \cite{Egozcue/etal:2016} and mimics subcompositional dominance in compositions. The two definitions coincide when $\lambda$ is a probability measure.

The scalar product in \eqref{eq:2.3} induces a norm and a distance in the usual way. That is, for arbitrary $f_\mu, f_\nu \in \mathcal{B}^2 (\lambda)$, one sets
\[
||f_\mu||_{\mathcal{B}^2 (\lambda)} = \sqrt{\langle f_\mu, f_\mu\rangle_{\mathcal{B}^2 (\lambda)}},\quad d_{\mathcal{B}^2 (\lambda)}(f_\mu,f_\nu)=||f_\mu\ominus f_\nu||_{\mathcal{B}^2 (\lambda)},
\]
where $f_\mu\ominus f_\nu=f_\mu\oplus[(-1)\odot f_\nu]$ is the perturbation-subtraction of densities. 

\subsection{clr transformation\label{sec:2.4}}

As explained by \cite{Boogaart/etal:2014}, the Bayes space $\mathcal{B}^2 (\lambda)$ is in one-to-one correspondence with the subspace  
\begin{equation}\label{eq:2.4}
L_0^2 (\lambda) = \left\{ f : \Omega \to \mathbb{R} \, {\Big |} \int_\Omega f^2 d \lambda < \infty, \int_\Omega f d\lambda = 0 \right\}
\end{equation}
of the classical Hilbert space $L^2 (\lambda)$ of square integrable functions with respect to $\lambda$. This correspondence is provided by the mapping ${\rm clr}: \mathcal{B}^2 (\lambda) \to L_0^2 (\lambda)$, defined for every $f_\mu \in \mathcal{B}^2 (\lambda)$, by
\[
\clr (f_\mu) = \ln (f_\mu) - \frac{1}{\lambda({\Omega})} \int_\Omega \ln (f_\mu) d \lambda
\]
and known as the centred log-ratio transformation, or clr transformation for short. 

It is easy to see from Theorem~1 in \cite{Boogaart/etal:2014} that with the definition of the scalar product in \eqref{eq:2.3}, the clr transformation is an isomorphism. In particular, the map clr is linear, i.e., for any $f_\mu$, $f_\nu$ in $\mathcal{B}^2 (\lambda)$ and $\alpha \in \mathbb{R}$, one has
\[
\clr (f_\mu \oplus f_\nu) = \clr (f_\mu) + \clr (f_\nu), \quad \clr (\alpha \odot f_\mu) =\alpha \cdot \clr (f_\mu).
\]
In the context of Bayes spaces, the space $L_0^2 (\lambda) $ is also referred to as the clr space.

\section{Orthogonal decomposition of bivariate densities\label{sec:3}}

Up to this point, the reference measure $\lambda$ needed only be real-valued, positive, and finite. Focusing on the case where $(\Omega, \mathcal{A})$ is a product space and $\lambda$ is a product of probability measures, \cite{Hron/etal:2020} used Bayes space methodology to develop an orthogonal decomposition of bivariate probability densities in $\mathcal{B}^2 (\lambda)$. 

In this section, the findings of \cite{Hron/etal:2020} are reviewed and recast in a more general framework which accommodates arbitrary positive finite reference measures $\lambda$ rather than probability measures. Moreover, it is shown that the elements of the decomposition are in fact orthogonal projections on certain subspaces of $\mathcal{B}^2 (\lambda)$. 

\subsection{Notation\label{sec:3.1}}

Let $(\Omega_1, \mathcal{A}_1, \lambda_1)$ and $(\Omega_2, \mathcal{A}_2, \lambda_2)$ be two measurable spaces with finite, positive, real-valued measures $\lambda_1$ and $\lambda_2$, respectively. Set
\[
\Omega = \Omega_1 \times \Omega_2, \quad \mathcal{A} = \mathcal{A}_1 \otimes \mathcal{A}_2, \quad \lambda = \lambda_1 \otimes \lambda_2.
\]
Let $\mathcal{B}(\lambda)$ be the Bayes space on $(\Omega,\mathcal{A})$ and $\mathcal{B}^2 (\lambda)$ be the subspace of $\lambda$-densities whose logarithm is square-integrable, as in Section~\ref{sec:2.3}. For each $i \in \{1, 2 \}$, let
\begin{itemize}
\item [(i)]
$\mathcal{B}(\lambda_i)$ be the Bayes space on $(\Omega_i, \mathcal{A}_i)$ with reference measure $\lambda_i$;
\item [(ii)]
$\mathcal{B}^2(\lambda_i)$ be the subspace of $\mathcal{B}(\lambda_i)$ consisting of $\lambda_i$-densities whose logarithm is square-integrable. 
\end{itemize}

The spaces $\mathcal{B}^2(\lambda_1)$ and $\mathcal{B}^2(\lambda_2)$ can be embedded in $\mathcal{B}^2 (\lambda)$ as follows: any $\mu_1 \in  \mathcal{B}^2(\lambda_1)$ is mapped to the product measure $\mu_1 \otimes \lambda_2$ and similarly, any $\mu_2 \in \mathcal{B}^2(\lambda_2)$ is mapped to $\lambda_1 \otimes \mu_2$. These embeddings are respectively denoted $\mathcal{B}^2_{ \{ 1 \} } (\lambda)$ and $\mathcal{B}^2_{ \{ 2 \} } (\lambda)$. 

In what follows, $L_0^2 (\lambda)$, $L_0^2(\lambda_1)$, and $L_0^2(\lambda_2)$ refer to the subspaces of the respective $L_2$ spaces on $\Omega$, $\Omega_1$, and $\Omega_2$ as specified in Eq.~\eqref{eq:2.4}.

\subsection{Properties of the subspaces $\mathcal{B}^2_{ \{ 1 \} } (\lambda)$ and $\mathcal{B}^2_{ \{ 2 \} } (\lambda)$\label{sec:3.2}}

The key to the orthogonal decomposition of bivariate densities proposed by \cite{Hron/etal:2020} is the fact that the subspaces $\mathcal{B}^2_{ \{ 1 \} } (\lambda)$ and $\mathcal{B}^2_{ \{ 2 \} } (\lambda)$ of $\mathcal{B}^2 (\lambda)$ are complete and orthogonal. To prove this result, which is formally stated below, the following technical result will be needed.

\begin{lemma}
\label{lem:3.1}
Consider arbitrary $f, g \in \mathcal{B}^2_{\rm ind}(\lambda)$ and suppose that $f \prop2 f_1 f_2$ and $g \prop2 g_1 g_2$, where for $j \in \{1,2\}$, $f_j, g_j$ are $\lambda_j$-densities. Then 
\begin{enumerate}
\item[(i)] $\langle f ,g  \rangle_{\mathcal{B}^2 (\lambda)} = \lambda_2(\Omega_2) \langle f_1,g_1  \rangle_{\mathcal{B}^2(\lambda_1)} + \lambda_1(\Omega_1) \langle f_2,g_2  \rangle_{\mathcal{B}^2(\lambda_2)}$.
\item[(ii)] $\| f\|^2_{\mathcal{B}^2 (\lambda)} = \lambda_2(\Omega_2) \| f_1\|^2_{\mathcal{B}^2(\lambda_1)} +  \lambda_1(\Omega_1) \| f_2\|^2_{\mathcal{B}^2(\lambda_2)}$.
\end{enumerate}
\end{lemma}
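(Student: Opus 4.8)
The plan is to transfer the computation to the clr space $L_0^2(\lambda)$, where the $\mathcal{B}^2$ scalar product is the ordinary $L^2$ pairing and the product structure $f\prop2 f_1f_2$ turns into an additive decomposition. First, I would record that, with the normalization of Definition~\ref{def:2.3}, the clr transformation is an isometry, i.e.
\[
\langle f_\mu,f_\nu\rangle_{\mathcal{B}^2 (\lambda)}=\int_\Omega \clr(f_\mu)\,\clr(f_\nu)\,d\lambda,
\]
and similarly with $\lambda_j$ in place of $\lambda$ on each factor space $\mathcal{B}^2(\lambda_j)$. This is a one-line consequence of the definition: writing $\ln\{f_\mu(\omega_1)/f_\mu(\omega_2)\}=\clr(f_\mu)(\omega_1)-\clr(f_\mu)(\omega_2)$ (the additive constant cancels), one expands the product of the two differences into four pieces; the two mixed pieces integrate to zero because $\clr(f_\mu),\clr(f_\nu)\in L_0^2(\lambda)$, and the two remaining pieces each contribute $\lambda(\Omega)\int_\Omega\clr(f_\mu)\clr(f_\nu)\,d\lambda$, which cancels the factor $1/\{2\lambda(\Omega)\}$. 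This is just the isometry already contained in Theorem~1 of \cite{Boogaart/etal:2014}; spelling it out keeps the argument self-contained.

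Second, I would check that the clr of a product factorizes: if $f\prop2 f_1f_2$, then for $\omega=(x,y)\in\Omega_1\times\Omega_2$,
\[
\clr(f)(x,y)=\clr(f_1)(x)+\clr(f_2)(y).
\]
This follows directly from the definition of clr, using $\lambda=\lambda_1\otimes\lambda_2$ and Fubini to split $\int_\Omega\ln(f_1f_2)\,d\lambda$ into $\lambda_2(\Omega_2)\int_{\Omega_1}\ln f_1\,d\lambda_1+\lambda_1(\Omega_1)\int_{\Omega_2}\ln f_2\,d\lambda_2$; the multiplicative constant hidden in $\prop2$ is killed by clr, so the identity does not depend on the chosen factorization. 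In passing I would note that the right-hand side of the lemma is well defined: the clr images $\clr(f_j)$, and hence $\langle f_j,g_j\rangle_{\mathcal{B}^2(\lambda_j)}$, are unchanged if $f_j$ is rescaled, and $f_j\in\mathcal{B}^2(\lambda_j)$ (as follows from $f\in\mathcal{B}^2_{\rm ind}(\lambda)$, or directly: taking the representatives with $\int_{\Omega_j}\ln f_j\,d\lambda_j=0$ gives $\int_\Omega|\ln f|^2\,d\lambda=\lambda_2(\Omega_2)\|\clr(f_1)\|_{L^2(\lambda_1)}^2+\lambda_1(\Omega_1)\|\clr(f_2)\|_{L^2(\lambda_2)}^2<\infty$).

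Combining the two steps, $\langle f,g\rangle_{\mathcal{B}^2 (\lambda)}=\int_{\Omega_1\times\Omega_2}\bigl[\clr(f_1)(x)+\clr(f_2)(y)\bigr]\bigl[\clr(g_1)(x)+\clr(g_2)(y)\bigr]\,d(\lambda_1\otimes\lambda_2)$. Expanding into four terms and using Fubini, the two mixed terms vanish because $\int_{\Omega_1}\clr(f_1)\,d\lambda_1=0$ and $\int_{\Omega_2}\clr(g_2)\,d\lambda_2=0$ (and symmetrically), while the two pure terms equal $\lambda_2(\Omega_2)\int_{\Omega_1}\clr(f_1)\clr(g_1)\,d\lambda_1$ and $\lambda_1(\Omega_1)\int_{\Omega_2}\clr(f_2)\clr(g_2)\,d\lambda_2$; by the first step these are $\lambda_2(\Omega_2)\langle f_1,g_1\rangle_{\mathcal{B}^2(\lambda_1)}$ and $\lambda_1(\Omega_1)\langle f_2,g_2\rangle_{\mathcal{B}^2(\lambda_2)}$, which is (i). Part (ii) is then the special case $g=f$, $g_1=f_1$, $g_2=f_2$ of (i).

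I do not expect a genuine obstacle here: once one passes to clr the computation is elementary. The only points needing attention are the bookkeeping of the volume constants $\lambda_1(\Omega_1)$ and $\lambda_2(\Omega_2)$ produced by the mismatch between the $1/\{2\lambda(\Omega)\}$ normalization on the product space and the $1/\{2\lambda_j(\Omega_j)\}$ normalizations on the factors, together with the well-definedness remarks above. One could instead expand the double integral in Definition~\ref{def:2.3} directly, without clr, at the cost of a marginally longer but equally routine computation.
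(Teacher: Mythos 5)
Your proposal is correct and follows essentially the same route as the paper: pass to the clr space via the isometry, use $\clr(f)=\clr(f_1)+\clr(f_2)$, expand the $L^2(\lambda)$ pairing into four terms, kill the mixed terms by the zero-integral property of clr images, and track the volume factors $\lambda_2(\Omega_2)$ and $\lambda_1(\Omega_1)$ when reducing to the factor spaces. The only difference is that you verify the isometry of clr directly from Definition~\ref{def:2.3} rather than citing it, which makes the argument slightly more self-contained but does not change its substance.
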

 
\begin{proof} As statement (ii) is a direct consequence of statement (i), it suffices to prove the latter. To this end, first observe that the definition of the clr transformation immediately implies that
\[
\clr (f) = \clr (f_1) + \clr (f_2),
\]
where for $i \in \{1,2\}$, $\clr (f_i)$ refers to the transformation of $f_i$ by the clr mapping from $\mathcal{B}^2(\lambda_j)$ to $L^2_0(\lambda_j)$. An analogous result holds for $g$, viz. $\clr (g) = \clr (g_1) + \clr (g_2)$.  Second, note that because for any $i \in \{1,2\}$, $\clr (f_i)$ and $\clr (g_i)$ are elements of $L_0^2(\lambda_j)$ and in particular integrate to $0$, one has, for any distinct $i, j \in \{1,2\}$,
\[
\langle \clr (f_i), \clr (g_j) \rangle_{L^2 (\lambda)} = \int_{\Omega_i} \clr (f_i) d \lambda_i \int_{\Omega_j} \clr (g_j) d \lambda_j = 0. 
\]
These observations and the fact that the clr transformation is an isomorphism imply
\begin{align*}
\langle f ,g  \rangle_{\mathcal{B}^2 (\lambda)} & = \langle \clr (f),\clr (g)  \rangle_{L^2 (\lambda)} = \langle \clr (f_1) + \clr (f_2),\clr (g_1)+\clr (g_2)  \rangle_{L^2 (\lambda)} \\
&= \langle \clr (f_1) ,\clr (g_1)  \rangle_{L^2 (\lambda)} + \langle \clr (f_2),\clr (g_2)  \rangle_{L^2 (\lambda)} \\
& = \lambda_2(\Omega_2) \langle \clr (f_1) ,\clr (g_1)  \rangle_{L^2(\lambda_1)} + \lambda_1(\Omega_1) \langle \clr (f_2),\clr (g_2)  \rangle_{L^2(\lambda_2)}\\
 & = \lambda_2(\Omega_2) \langle f_1 ,g_1  \rangle_{\mathcal{B}^2(\lambda_1)} + \lambda_1(\Omega_1) \langle f_2,g_2  \rangle_{\mathcal{B}^2(\lambda_2)}
\end{align*} 
as was to be shown.
\end{proof}

\begin{lemma}
\label{lem:3.2} 
The spaces $\mathcal{B}^2_{ \{ 1 \} } (\lambda)$ and $\mathcal{B}^2_{ \{ 2 \} } (\lambda)$ are complete and orthogonal.
\end{lemma}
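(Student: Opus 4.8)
The plan is to deduce both properties directly from Lemma~\ref{lem:3.1}, together with the fact (recalled in Section~\ref{sec:2.3}) that each one-dimensional Bayes space $\mathcal{B}^2(\lambda_i)$ is itself a Hilbert space and hence complete. The first step is to unwind the embeddings. An element of $\mathcal{B}^2_{\{1\}}(\lambda)$ is the class of a product measure $\mu_1 \otimes \lambda_2$, so it admits a representative density of the form $f_1 \cdot 1$, where $f_1$ is a $\lambda_1$-density and $1$ denotes the constant (neutral) density on $\Omega_2$; symmetrically, an element of $\mathcal{B}^2_{\{2\}}(\lambda)$ has a representative $1 \cdot g_2$. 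In particular both subspaces consist of densities proportional to a product of a $\lambda_1$-density and a $\lambda_2$-density (one factor being constant), so they lie in $\mathcal{B}^2_{\rm ind}(\lambda)$ and Lemma~\ref{lem:3.1} applies to any pair drawn from them.

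For orthogonality, take $f \in \mathcal{B}^2_{\{1\}}(\lambda)$ and $g \in \mathcal{B}^2_{\{2\}}(\lambda)$ with representatives $f \prop2 f_1 \cdot 1$ and $g \prop2 1 \cdot g_2$. Applying Lemma~\ref{lem:3.1}(i) with second factor $1$ on the $\mathcal{B}^2_{\{1\}}(\lambda)$ side and first factor $1$ on the $\mathcal{B}^2_{\{2\}}(\lambda)$ side yields
\[
\langle f, g \rangle_{\mathcal{B}^2 (\lambda)} = \lambda_2(\Omega_2)\,\langle f_1, 1 \rangle_{\mathcal{B}^2(\lambda_1)} + \lambda_1(\Omega_1)\,\langle 1, g_2 \rangle_{\mathcal{B}^2(\lambda_2)} = 0,
\]
because the constant density is the origin of each Hilbert space $\mathcal{B}^2(\lambda_j)$ and is thus orthogonal to every element. (Equivalently, one can argue in the clr picture: $\clr(f)$ is a function of $\omega_1$ alone integrating to zero against $\lambda_1$, $\clr(g)$ a function of $\omega_2$ alone integrating to zero against $\lambda_2$, and by Fubini $\langle \clr(f), \clr(g) \rangle_{L^2(\lambda)}$ factors as a product of two vanishing integrals.)

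For completeness, I would note that the embedding $\iota_1 : \mathcal{B}^2(\lambda_1) \to \mathcal{B}^2 (\lambda)$, $f_1 \mapsto f_1 \cdot 1$, whose image is by definition $\mathcal{B}^2_{\{1\}}(\lambda)$, is a linear bijection onto that image, and by Lemma~\ref{lem:3.1}(ii) with second factor $1$ it rescales norms by the fixed constant $\sqrt{\lambda_2(\Omega_2)} \in (0,\infty)$; hence $\iota_1$ is a homeomorphism onto $\mathcal{B}^2_{\{1\}}(\lambda)$. Since $(\mathcal{B}^2(\lambda_1), \langle \cdot, \cdot \rangle_{\mathcal{B}^2(\lambda_1)})$ is a Hilbert space, hence complete, its homeomorphic image $\mathcal{B}^2_{\{1\}}(\lambda)$ is complete as well, and the same argument applies verbatim to $\mathcal{B}^2_{\{2\}}(\lambda)$. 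An alternative route, should one prefer to work inside $L_0^2 (\lambda)$, is to observe via the clr isomorphism that $\clr(\mathcal{B}^2_{\{1\}}(\lambda))$ is the range of the bounded idempotent $h \mapsto \lambda_2(\Omega_2)^{-1} \int_{\Omega_2} h \, d\lambda_2$ on $L_0^2 (\lambda)$, and ranges of bounded projections are closed.

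I do not expect a genuine obstacle in this lemma: the substance is confined to the bookkeeping of the embeddings — recognizing the representative $f_1 \cdot 1$ so that Lemma~\ref{lem:3.1} can be invoked with the second factor equal to the neutral element — and to citing the correct completeness statement for the one-dimensional Bayes spaces. The one point that must be stated with care is that $\lambda_1(\Omega_1)$ and $\lambda_2(\Omega_2)$ are strictly positive and finite, since this is precisely what turns the rescaling in Lemma~\ref{lem:3.1}(ii) into an equivalence of norms rather than a degenerate map, and hence what makes $\iota_1$ a homeomorphism onto its image.
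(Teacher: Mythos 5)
Your proposal is correct and follows essentially the same route as the paper: orthogonality via Lemma~\ref{lem:3.1}(i) applied to the representatives $f_1 \cdot 1$ and $1 \cdot g_2$, and completeness via the norm identity of Lemma~\ref{lem:3.1}(ii) transferring completeness from $\mathcal{B}^2(\lambda_1)$ and $\mathcal{B}^2(\lambda_2)$. Your additional remarks (the homeomorphism phrasing, the clr/bounded-projection alternative, and the observation that finiteness and positivity of $\lambda_i(\Omega_i)$ are what make the rescaling nondegenerate) are sound elaborations of what the paper leaves implicit.
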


\begin{proof}
For any $f \in \mathcal{B}^2_{ \{ 1 \} } (\lambda)$, one has $f \prop2 f_1 \cdot 1$, where $f_1$ is a $\lambda_1$-density and $1$ is the neutral element in $\mathcal{B}^2 (\lambda_2)$. It follows from Lemma~\ref{lem:3.1} that
\[
\| f\|^2_{\mathcal{B}^2 (\lambda)} = \lambda_2(\Omega_2) \| f_1\|^2_{\mathcal{B}^2(\lambda_1)}.
\]
Therefore, the completeness of $\mathcal{B}^2_{ \{ 1 \} } (\lambda)$ follows directly from that of $\mathcal{B}^2 (\lambda_1)$ shown by \cite{Boogaart/etal:2014}. The argument for $\mathcal{B}^2 (\lambda_2)$ is analogous.

The fact that the subspaces $\mathcal{B}^2_{ \{ 1 \} } (\lambda)$ and $\mathcal{B}^2_{ \{ 2 \} } (\lambda)$ are orthogonal also follows from Lemma~\ref{lem:3.1}. Indeed, for arbitrary $f \in \mathcal{B}^2_{ \{ 1 \} } (\lambda)$ and $g \in \mathcal{B}^2_{ \{ 2 \} } (\lambda)$, one has $f \prop2 f_1 \cdot 1$ and $g \prop2 1 \cdot g_2$ with $f_1 \in \mathcal{B}^2 (\lambda_1)$ and $g_2 \in \mathcal{B}^2 (\lambda_2)$. Thus
\[
\langle f, g \rangle_{\mathcal{B}^2 (\lambda)} = \lambda_2 (\Omega_2) \langle f_1,1  \rangle_{\mathcal{B}^2(\lambda_1)} + \lambda_1 (\Omega_1) \langle 1,g_2  \rangle_{\mathcal{B}^2(\lambda_2)} = 0.
\]
This concludes the argument.
\end{proof}

The first component in the orthogonal decomposition of bivariate densities proposed by~\cite{Hron/etal:2020} is the so-called independence space, defined by
\begin{equation}
\label{eq:3.1}
\mathcal{B}_{\rm ind}^2 (\lambda) = \{ \mu \in \mathcal{B}^2 (\lambda) \;| \;  \exists_{\mu_1 \in \mathcal{B}^2(\lambda_1)} \; \exists_{\mu_2 \in \mathcal{B}^2(\lambda_2)} \; \mu = \mu_1 \otimes \mu_2\}.
\end{equation}
From well-known results about product spaces, $\mathcal{B}_{\rm ind}^2 (\lambda)$ can be viewed as the space of $\lambda$-densities, which are products of a $\lambda_1$-density and a $\lambda_2$-density. As stated below, this space is complete and related in a simple way to the spaces $\mathcal{B}^2_{ \{ 1 \} } (\lambda)$ and $\mathcal{B}^2_{ \{ 2 \} } (\lambda)$.

\begin{lemma}
\label{lem:3.3} 
The space $\mathcal{B}_{\rm ind}^2 (\lambda)$ of $\mathcal{B}^2 (\lambda)$ is complete and $\mathcal{B}^2_{ \{ 1 \} } (\lambda) \oplus \mathcal{B}^2_{ \{ 2 \} } (\lambda) = \mathcal{B}_{\rm ind}^2 (\lambda)$.  
\end{lemma}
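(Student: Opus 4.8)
The plan is to establish the two claims of Lemma~\ref{lem:3.3} separately, starting with the algebraic identity $\mathcal{B}^2_{\{1\}}(\lambda) \oplus \mathcal{B}^2_{\{2\}}(\lambda) = \mathcal{B}_{\rm ind}^2(\lambda)$ and then deducing completeness from it together with Lemma~\ref{lem:3.2}. For the identity, I would argue by double inclusion at the level of $\lambda$-densities. If $\mu \in \mathcal{B}^2_{\{1\}}(\lambda) \oplus \mathcal{B}^2_{\{2\}}(\lambda)$, then $\mu = \mu' \oplus \mu''$ with $\mu' \in \mathcal{B}^2_{\{1\}}(\lambda)$ and $\mu'' \in \mathcal{B}^2_{\{2\}}(\lambda)$; by the embedding definitions, $\mu' \prop2 (f_1 \otimes 1)$ for some $\lambda_1$-density $f_1 \in \mathcal{B}^2(\lambda_1)$ and $\mu'' \prop2 (1 \otimes f_2)$ for some $\lambda_2$-density $f_2 \in \mathcal{B}^2(\lambda_2)$, so the density of $\mu$ is proportional to $(f_1 \otimes 1)(1 \otimes f_2) = f_1 \otimes f_2$, which exhibits $\mu$ as an element of $\mathcal{B}_{\rm ind}^2(\lambda)$ via Eq.~\eqref{eq:3.1}. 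Conversely, if $\mu \in \mathcal{B}_{\rm ind}^2(\lambda)$ with $\mu = \mu_1 \otimes \mu_2$, write its density as $f_1 \otimes f_2 \prop2 (f_1 \otimes 1) (1 \otimes f_2)$, which is the perturbation of the embedded densities; one only needs to check that $f_1 \in \mathcal{B}^2(\lambda_1)$ and $f_2 \in \mathcal{B}^2(\lambda_2)$, i.e.\ that their logarithms are individually square integrable, which follows from $\int_\Omega |\ln(f_1\otimes f_2)|^2\, d\lambda < \infty$ together with Fubini and the finiteness of $\lambda_1$ and $\lambda_2$ (expanding $|\ln f_1 + \ln f_2|^2$ and noting that cross terms and each square are integrable forces $\int_{\Omega_1}|\ln f_1|^2 d\lambda_1 < \infty$ and likewise for $f_2$).

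For completeness of $\mathcal{B}_{\rm ind}^2(\lambda)$, I would invoke the standard Hilbert space fact that the (internal) direct sum of two closed, mutually orthogonal subspaces of a Hilbert space is itself closed. By Lemma~\ref{lem:3.2}, $\mathcal{B}^2_{\{1\}}(\lambda)$ and $\mathcal{B}^2_{\{2\}}(\lambda)$ are complete (hence closed as subspaces of the Hilbert space $\mathcal{B}^2(\lambda)$) and orthogonal, so their sum is closed, and by the identity just proved this sum is exactly $\mathcal{B}_{\rm ind}^2(\lambda)$. A closed subspace of a complete space is complete, which gives the claim. Alternatively, if one prefers a self-contained argument, a Cauchy sequence $(\mu^{(n)})$ in $\mathcal{B}_{\rm ind}^2(\lambda)$ decomposes as $\mu^{(n)} = \mu_1^{(n)} \oplus \mu_2^{(n)}$ with $\mu_i^{(n)} \in \mathcal{B}^2_{\{i\}}(\lambda)$, and by the Pythagorean identity from Lemma~\ref{lem:3.1}(ii), namely $\|\mu^{(n)} \ominus \mu^{(m)}\|^2_{\mathcal{B}^2(\lambda)} = \lambda_2(\Omega_2)\|f_1^{(n)} \ominus f_1^{(m)}\|^2_{\mathcal{B}^2(\lambda_1)} + \lambda_1(\Omega_1)\|f_2^{(n)} \ominus f_2^{(m)}\|^2_{\mathcal{B}^2(\lambda_2)}$, each coordinate sequence is Cauchy in its own complete space $\mathcal{B}^2(\lambda_i)$ (here using $\lambda_i(\Omega_i) > 0$), hence converges to some $f_i$, and then $f_1 \otimes f_2$ is the limit of $\mu^{(n)}$ and lies in $\mathcal{B}_{\rm ind}^2(\lambda)$.

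The main obstacle I anticipate is not in the completeness step, which is essentially bookkeeping once Lemmas~\ref{lem:3.1} and~\ref{lem:3.2} are in hand, but rather in the converse inclusion of the algebraic identity: verifying that a generic element of $\mathcal{B}_{\rm ind}^2(\lambda)$ really does factor into pieces each lying in the correct one-dimensional-factor Bayes space with the square-integrability constraint. The subtlety is that membership in $\mathcal{B}^2(\lambda)$ only controls $\int_\Omega|\ln f_1 + \ln f_2|^2\,d\lambda$ jointly, and one must rule out pathological cancellation between $\ln f_1$ and $\ln f_2$; this is where finiteness of $\lambda_1,\lambda_2$ (so that constants are square integrable) and the product structure of $\lambda$ are used decisively, via a Fubini argument showing that joint square integrability of a sum of a function of $\omega_1$ and a function of $\omega_2$ forces square integrability of each summand.
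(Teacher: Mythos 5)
Your proof is correct and follows essentially the same route as the paper: completeness via the Pythagorean norm identity of Lemma~\ref{lem:3.1}~(ii) together with the completeness of $\mathcal{B}^2(\lambda_1)$ and $\mathcal{B}^2(\lambda_2)$, and the identity $\mathcal{B}^2_{\{1\}}(\lambda)\oplus\mathcal{B}^2_{\{2\}}(\lambda)=\mathcal{B}^2_{\rm ind}(\lambda)$ via orthogonality (Lemma~\ref{lem:3.2}) and the factorization $f\prop2 f_1 f_2=(f_1\cdot 1)\oplus(1\cdot f_2)$. The only remark worth making is that the ``main obstacle'' you anticipate is vacuous here: the definition in Eq.~\eqref{eq:3.1} already requires $\mu_1\in\mathcal{B}^2(\lambda_1)$ and $\mu_2\in\mathcal{B}^2(\lambda_2)$, so the individual square integrability of $\ln f_1$ and $\ln f_2$ is built into membership in $\mathcal{B}^2_{\rm ind}(\lambda)$ and your Fubini/cancellation argument is not needed (though it would be, under a weaker definition).
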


\begin{proof}
The completeness of $\mathcal{B}_{\rm ind}^2 (\lambda)$ in $\mathcal{B}^2 (\lambda)$ is a direct consequence of Lemma~\ref{lem:3.1} (ii) and the completeness of the spaces $\mathcal{B}^2(\lambda_1)$ and $\mathcal{B}^2(\lambda_2)$. The fact that $\mathcal{B}_{\rm ind}^2 (\lambda)$ is the direct sum of $\mathcal{B}^2_{ \{ 1 \} } (\lambda)$ and $\mathcal{B}^2_{ \{ 2 \} } (\lambda)$ follows from their orthogonality shown in Lemma~\ref{lem:3.2} and the property that densities of product measures are products of densities, i.e., that any $f \in \mathcal{B}_{\rm ind}^2 (\lambda)$ satisfies $f \prop2 f_1 f_2$, where $f_1$ is a $\lambda_1$-density and $f_2$ is a $\lambda_2$-density. Thus $f = (f_1 \cdot 1) \oplus (1 \cdot f_2)$ is a sum of two elements from $\mathcal{B}^2_{ \{ 1 \} } (\lambda)$ and $\mathcal{B}^2_{ \{ 2 \} } (\lambda)$, respectively.
\end{proof}

\subsection{Geometric margins\label{sec:3.3}}
  
Mimicking \cite{Hron/etal:2020}, one can define the geometric margins of an element $f_\mu \in \mathcal{B}^2 (\lambda)$ to be the $\lambda$-densities $f_{\mu,1} \in \mathcal{B}^2_{\{ 1 \}} (\lambda)$ and $f_{\mu,2} \in \mathcal{B}^2_{\{ 2 \}} (\lambda)$ given, for any $(\omega_1,\omega_2) \in \Omega$, by
\[
f_{\mu,1} (\omega_1,\omega_2) \equiv f_{\mu,1} (\omega_1) \prop2 \exp \left[ \frac{1}{\lambda_2 (\Omega_2)} \int_{\Omega_2} \ln \{ f_\mu(\omega_1,\omega^*) \} d \lambda_2(\omega^*) \right]
\]
and
\[
f_{\mu,2}(\omega_1,\omega_2) \equiv f_{\mu,2} (\omega_2) \prop2 \exp \left[ \frac{1}{\lambda_1(\Omega_1)} \int_{\Omega_1} \ln \{ f_\mu(\omega^*,\omega_2) \} d \lambda_1(\omega^*) \right].
\]

These definitions are more general than those of \cite{Hron/etal:2020}, which were restricted to the case where $\lambda_1$ and $\lambda_2$ are probability measures. It will be seen below that when $\lambda_1$ and $\lambda_2$ are merely finite, the factors $1/\lambda_2(\Omega_2)$ and $1/\lambda_1(\Omega_1)$ must be included so that $f_{\mu,1}$ and $f_{\mu,2}$ can be interpreted as orthogonal projections of $f_\mu$ onto $\mathcal{B}^2_{ \{ 1 \} } (\lambda)$ and $\mathcal{B}^2_{ \{ 2 \} } (\lambda)$, respectively. 

Before the issue of orthogonal projections can be addressed, it is useful to compute the clr transformation of the geometric margins and to see how it relates to the clr transformation of $f_\mu$. 
For each $i \in \{1, 2 \}$, the clr transformation of the geometric margin $f_{\mu,i}$, denoted $\clr (f_{\mu,i})$, is a function of a single variable, but in contrast to \cite{Hron/etal:2020}, the factor $1/\lambda_i (\Omega_i)$ is still present.

\begin{lemma}
\label{lem:3.4}
Given any $f_\mu \in \mathcal{B}^2 (\lambda)$ with geometric margins $f_{\mu,1}$ and $f_{\mu,2}$, one has
\[
\clr (f_{\mu,1}) = \frac{1}{\lambda_2 (\Omega_2)} \int_{\Omega_2} \clr (f_\mu) d \lambda_2, \quad \clr (f_{\mu,2}) =  \frac{1}{\lambda_1 (\Omega_1)} \int_{\Omega_1} \clr (f_\mu) d \lambda_1.
\]
\end{lemma}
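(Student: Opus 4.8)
The plan is to reduce both claimed identities to the definition of the clr transformation together with a single application of Fubini's theorem; since the two identities are interchanged by swapping the roles of $\Omega_1$ and $\Omega_2$, it suffices to prove the first, $\clr (f_{\mu,1}) = \lambda_2(\Omega_2)^{-1}\int_{\Omega_2}\clr (f_\mu)\,d\lambda_2$.

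First I would fix a convenient representative of $f_{\mu,1}$, namely the one with $\ln (f_{\mu,1})(\omega_1) = \lambda_2(\Omega_2)^{-1}\int_{\Omega_2}\ln\{f_\mu(\omega_1,\omega^*)\}\,d\lambda_2(\omega^*)$; the multiplicative constant inherent in the definition of $f_{\mu,1}$ contributes only an additive constant to $\ln(f_{\mu,1})$, which is annihilated by $\clr$, so this choice is harmless. Along the way I would note that this function belongs to $L^2(\lambda_1)$, so that $f_{\mu,1}\in \mathcal{B}^2_{\{1\}}(\lambda)$ and $\clr (f_{\mu,1})$ is well defined: by the conditional Jensen (equivalently, Cauchy--Schwarz) inequality one has $|\ln (f_{\mu,1})(\omega_1)|^2 \le \lambda_2(\Omega_2)^{-1}\int_{\Omega_2}|\ln f_\mu(\omega_1,\omega^*)|^2\,d\lambda_2(\omega^*)$, and integrating over $\Omega_1$ against $\lambda_1$ bounds $\int_{\Omega_1}|\ln f_{\mu,1}|^2\,d\lambda_1$ by $\lambda_2(\Omega_2)^{-1}\int_\Omega |\ln f_\mu|^2\,d\lambda < \infty$.

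Next I would expand both sides by means of $\clr (h) = \ln (h) - \lambda(\Omega)^{-1}\int_\Omega \ln (h)\,d\lambda$. On the left-hand side, since $f_{\mu,1}$ depends on $\omega_1$ only and $\lambda(\Omega) = \lambda_1(\Omega_1)\lambda_2(\Omega_2)$, the centering integral collapses to $\lambda_1(\Omega_1)^{-1}\int_{\Omega_1}\ln (f_{\mu,1})\,d\lambda_1$, giving $\clr (f_{\mu,1}) = \ln (f_{\mu,1}) - \lambda_1(\Omega_1)^{-1}\int_{\Omega_1}\ln (f_{\mu,1})\,d\lambda_1$. On the right-hand side, integrating $\clr (f_\mu)(\omega_1,\cdot)$ against $\lambda_2/\lambda_2(\Omega_2)$ yields $\ln (f_{\mu,1})(\omega_1)$ from the first term, by the choice of representative above, minus the constant $\lambda(\Omega)^{-1}\int_\Omega \ln (f_\mu)\,d\lambda$. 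Comparing the two expressions, the identity reduces to the single equality $\lambda(\Omega)^{-1}\int_\Omega \ln (f_\mu)\,d\lambda = \lambda_1(\Omega_1)^{-1}\int_{\Omega_1}\ln (f_{\mu,1})\,d\lambda_1$.

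This last equality is exactly Fubini's theorem applied to $\ln (f_\mu)$ on the product space: the inner $\lambda_2$-integral of $\ln (f_\mu)$, divided by $\lambda_2(\Omega_2)$, is by definition $\ln (f_{\mu,1})$. The only hypothesis to verify before invoking Fubini is that $\ln (f_\mu) \in L^1(\lambda)$, which holds because $\lambda$ is finite and $\ln (f_\mu)\in L^2(\lambda)$ by membership in $\mathcal{B}^2 (\lambda)$, so Cauchy--Schwarz gives $\int_\Omega |\ln f_\mu|\,d\lambda \le \lambda(\Omega)^{1/2}\,\|\ln f_\mu\|_{L^2(\lambda)} < \infty$. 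I do not expect a genuine obstacle here; the one place demanding care is the bookkeeping of the normalizing factors $1/\lambda_i(\Omega_i)$, which is precisely the subtlety flagged in the text as distinguishing the present finite-measure setting from the probability-measure case treated by \cite{Hron/etal:2020}.
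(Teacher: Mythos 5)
Your proof is correct and follows essentially the same route as the paper's: expand the definition of the clr transformation on both sides and use Fubini's theorem to match the centering constants. The only differences are cosmetic reorganization (you compare the two constant terms directly rather than folding the centering inside the $\Omega_2$-integral) and the added integrability checks, which the paper leaves implicit.
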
 
\begin{proof}
To establish the first identity, write
\begin{multline*}
\clr (f_{\mu,1})(\omega_1,\omega_2) = \frac{1}{\lambda_2(\Omega_2)} \int_{\Omega_2} \ln \{ f_\mu(\omega_1,\omega^*) \} d \lambda_2(\omega^*)  \\
 - \frac{1}{\lambda_2(\Omega_2)\lambda(\Omega)} \int_{\Omega} \int_{\Omega_2}\ln \{f_\mu (\omega_1^*,\omega^*)\} d \lambda_2(\omega^*) d \lambda_1 (\omega_1^*) d \lambda_2(\omega_2^*) .
 \end{multline*}
Then observe that by Fubini's theorem, this expression can be rewritten as
\[
 \frac{1}{\lambda_2(\Omega_2)}\int_{\Omega_2}  \Bigl[ \ln \{ f_\mu(\omega_1,\omega^*) \} - \frac{1}{\lambda(\Omega)} \int_{\Omega} \ln \{f_\mu (\omega_1^*,\omega_2^*)\}   d \lambda_1(\omega_1^*) d \lambda_2(\omega_2^*) \Bigr] d \lambda_2(\omega^*). 
\]
The term in the square brackets is precisely $\clr (f_\mu)$, as was to be shown. The second identity can be proved analogously.
\end{proof}

\subsection{Geometric margins as projections\label{sec:3.4}}

The following result provides a key insight into what geometric margins represent.

\begin{proposition}
\label{prop:3.1}
For arbitrary $f_\mu \in \mathcal{B}^2 (\lambda)$ and $i \in \{1,2\}$, $f_{\mu,i}$ is the unique orthogonal projection of $f_{\mu}$ on $\mathcal{B}^2_{ \{ i \} } (\lambda)$. In particular,
\[
f_{\mu,i} = \argmin_{f \in \mathcal{B}_{\{ i \}}^2 (\lambda)} \| f_{\mu} \ominus f \|_{\mathcal{B}^2 (\lambda)}.
\]
\end{proposition}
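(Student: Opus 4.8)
The plan is to exploit the fact, recalled in Section~\ref{sec:2.4}, that the clr transformation is an isometric isomorphism between $\mathcal{B}^2(\lambda)$ and $L^2_0(\lambda)$, so that orthogonal projections in $\mathcal{B}^2(\lambda)$ correspond to orthogonal projections in $L^2_0(\lambda)$. Fix $i = 1$ (the case $i=2$ being symmetric). Since $\mathcal{B}^2_{\{1\}}(\lambda)$ is complete by Lemma~\ref{lem:3.2}, it is a closed subspace of the Hilbert space $\mathcal{B}^2(\lambda)$, hence the orthogonal projection of $f_\mu$ onto it exists and is unique, and it coincides with the unique minimiser of $\|f_\mu \ominus f\|_{\mathcal{B}^2(\lambda)}$ over $f \in \mathcal{B}^2_{\{1\}}(\lambda)$. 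So it suffices to show that $f_{\mu,1}$ is that orthogonal projection, i.e.\ that $f_{\mu,1} \in \mathcal{B}^2_{\{1\}}(\lambda)$ and that $f_\mu \ominus f_{\mu,1}$ is orthogonal to every element of $\mathcal{B}^2_{\{1\}}(\lambda)$.

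First I would check membership: $f_{\mu,1}$ depends only on $\omega_1$ and is of the form $f_1 \cdot 1$ for a $\lambda_1$-density $f_1$, so it lies in $\mathcal{B}^2_{\{1\}}(\lambda)$ provided $\ln f_1$ is square-integrable against $\lambda_1$; this follows from the definition of $f_{\mu,1}$, Jensen's inequality (or Cauchy--Schwarz) applied to the inner integral over $\Omega_2$, and the assumption $f_\mu \in \mathcal{B}^2(\lambda)$. Next, using the isometry I pass to the clr side: by Lemma~\ref{lem:3.4}, $\clr(f_{\mu,1}) = \frac{1}{\lambda_2(\Omega_2)}\int_{\Omega_2}\clr(f_\mu)\,d\lambda_2$, which is exactly the conditional-expectation-type averaging of $\clr(f_\mu)$ over the second coordinate. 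The orthogonal complement of $\clr\bigl(\mathcal{B}^2_{\{1\}}(\lambda)\bigr)$ inside $L^2_0(\lambda)$ consists of those $h \in L^2(\lambda)$ with $\int_\Omega h\,d\lambda = 0$ that are orthogonal to all functions depending only on $\omega_1$ with zero $\lambda_1$-integral; a short computation with Fubini shows this orthogonal complement is precisely the set of $h \in L^2_0(\lambda)$ with $\int_{\Omega_2} h(\cdot,\omega_2)\,d\lambda_2(\omega_2) = 0$ $\lambda_1$-a.e. Since $\clr(f_\mu) - \clr(f_{\mu,1})$ manifestly has vanishing $\Omega_2$-average by Lemma~\ref{lem:3.4}, it lies in this complement, which gives the desired orthogonality $\langle f_\mu \ominus f_{\mu,1}, f\rangle_{\mathcal{B}^2(\lambda)} = 0$ for all $f \in \mathcal{B}^2_{\{1\}}(\lambda)$.

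The main obstacle, such as it is, is bookkeeping rather than conceptual: one must be careful that the normalising constant $1/\lambda_i(\Omega_i)$ in the definition of the geometric margin is exactly what makes $\clr(f_{\mu,1})$ the $\lambda_2$-\emph{average} of $\clr(f_\mu)$ rather than its $\lambda_2$-integral, and this is precisely why the projection characterisation holds for finite $\lambda_i$ and not merely for probability measures (as the remark after the definition of geometric margins in Section~\ref{sec:3.3} anticipates). A clean alternative that avoids describing the orthogonal complement explicitly is to verify the variational identity directly: for any $f \in \mathcal{B}^2_{\{1\}}(\lambda)$ write $f_\mu \ominus f = (f_\mu \ominus f_{\mu,1}) \oplus (f_{\mu,1} \ominus f)$, note $f_{\mu,1}\ominus f \in \mathcal{B}^2_{\{1\}}(\lambda)$, and use Lemma~\ref{lem:3.4} together with Fubini to show the cross term $\langle f_\mu \ominus f_{\mu,1},\, f_{\mu,1}\ominus f\rangle_{\mathcal{B}^2(\lambda)}$ vanishes, whence $\|f_\mu\ominus f\|^2_{\mathcal{B}^2(\lambda)} = \|f_\mu\ominus f_{\mu,1}\|^2_{\mathcal{B}^2(\lambda)} + \|f_{\mu,1}\ominus f\|^2_{\mathcal{B}^2(\lambda)}$, which is minimised uniquely at $f = f_{\mu,1}$.
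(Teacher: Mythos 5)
Your proposal is correct and follows essentially the same route as the paper: invoke completeness of $\mathcal{B}^2_{\{1\}}(\lambda)$ (Lemma~\ref{lem:3.2}) for existence and uniqueness of the projection, then verify orthogonality of $f_\mu \ominus f_{\mu,1}$ to all of $\mathcal{B}^2_{\{1\}}(\lambda)$ by passing to the clr side and using Lemma~\ref{lem:3.4} together with Fubini to show the $\lambda_2$-average of $\clr(f_\mu) - \clr(f_{\mu,1})$ vanishes. Your added checks (membership of $f_{\mu,1}$ in $\mathcal{B}^2_{\{1\}}(\lambda)$, the explicit description of the orthogonal complement, and the variational Pythagoras argument) are sound elaborations but do not change the substance of the argument.
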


\begin{proof}
First note that because $\mathcal{B}^2_{ \{ 1 \} } (\lambda)$ is a complete subspace of $\mathcal{B}^2 (\lambda)$ by Lemma~\ref{lem:3.2}, there exists a unique orthogonal projection of $f_\mu$ onto $\mathcal{B}^2_{ \{ 1 \} } (\lambda)$. To verify that $f_{\mu,1}$ is this projection, it suffices to take an arbitrary $g \in \mathcal{B}^2_{ \{ 1 \} } (\lambda)$ and prove that
\[
\langle f_\mu \ominus f_{\mu,1}, g \rangle_{\mathcal{B}^2 (\lambda)} = 0. 
\]
Given that $g \in \mathcal{B}^2_{ \{ 1 \} } (\lambda)$, one has $g \prop2 g_1 \cdot 1$ for some $\lambda_1$-density $g_1$, where $1$ denotes the neutral element in $\mathcal{B}^2(\lambda_2)$. From the proof of Lemma~\ref{lem:3.1} and the fact that the clr transformation is an isomorphism, one has
\begin{multline*}
\langle f_\mu \ominus f_{\mu,1}, g \rangle_{\mathcal{B}^2 (\lambda)}=  \langle \clr (f_{\mu}) - \clr (f_{\mu,1}), \clr (g_1) \rangle_{L_2} \\= \int_{\Omega_1} \{\clr (g_1)\}(\omega_1) \left\{ \int_{\Omega_2} \clr (f_{\mu})(\omega_1,\omega_2) d \, \lambda_2(\omega_2)  - \lambda_2(\Omega_2) \clr (f_{\mu,1})(\omega_1) \right\} d \lambda_1(\omega_1).
\end{multline*}
By Lemma~\ref{lem:3.4}, the term in curly brackets is $0$, which completes the argument in the case $i = 1$. The case $i = 2$ is similar. 
\end{proof}

An analog of Proposition~\ref{prop:3.1} also exists in the clr space. To formulate this result, one must first embed $L_0^2 (\lambda_1)$ and $L_0^2 (\lambda_2)$ in $L_0^2 (\lambda)$ by setting, for $i \in \{1, 2 \}$,
\[
L_{0,i}^2 (\lambda) = \{ f \in L_0^2 \; | \; \exists_{f_i \in L_0^2(\lambda_i)} \;\; \forall_{(\omega_1,\omega_2) \in \Omega} \;  f(\omega_1,\omega_2) = f_i(\omega_i)\}.
\]
Because the clr transformation is an isomorphism, it follows from Lemma~\ref{lem:3.2} that $L_{0,i}^2 (\lambda)$ are complete subspaces of $L_0^2 (\lambda)$. 

\begin{corollary}
\label{cor:3.1}
For arbitrary $f_\mu \in \mathcal{B}^2 (\lambda)$ and $i \in \{1,2\}$, $\clr (f_{\mu,i})$ is the unique orthogonal projection of $\clr (f_{\mu})$ on $L_{0,i}^2 (\lambda)$.
\end{corollary}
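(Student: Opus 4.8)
The plan is to deduce the statement from Proposition~\ref{prop:3.1} by transporting it through the clr isomorphism, rather than redoing the orthogonality computation from scratch. The starting point is the observation, already implicit in the discussion preceding the corollary, that the clr map restricts to a bijection between $\mathcal{B}^2_{\{i\}}(\lambda)$ and $L_{0,i}^2(\lambda)$: if $f \in \mathcal{B}^2_{\{i\}}(\lambda)$, then $f \prop2 f_i \cdot 1$ for some $\lambda_i$-density $f_i$, so $\clr(f) = \clr(f_i)$ depends only on $\omega_i$ and lies in $L_0^2(\lambda_i)$ viewed inside $L_0^2(\lambda)$; conversely, any element of $L_{0,i}^2(\lambda)$ is of the form $h_i(\omega_i)$ with $h_i \in L_0^2(\lambda_i)$, and since clr is an isomorphism from $\mathcal{B}^2(\lambda_i)$ onto $L_0^2(\lambda_i)$, one has $h_i = \clr(g_i)$ for some $\lambda_i$-density $g_i$, whence $h$ is the clr image of $g_i \cdot 1 \in \mathcal{B}^2_{\{i\}}(\lambda)$.

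Next I would invoke the general principle that an isometric isomorphism between Hilbert spaces carries the orthogonal projection onto a closed subspace to the orthogonal projection onto the image subspace. Concretely, the defining property of $f_{\mu,i}$ established in Proposition~\ref{prop:3.1} is that $f_{\mu,i} \in \mathcal{B}^2_{\{i\}}(\lambda)$ and $\langle f_\mu \ominus f_{\mu,i}, g\rangle_{\mathcal{B}^2(\lambda)} = 0$ for every $g \in \mathcal{B}^2_{\{i\}}(\lambda)$. Applying the clr map, using its linearity and the fact that it preserves the scalar product, this translates to $\clr(f_{\mu,i}) \in L_{0,i}^2(\lambda)$ and $\langle \clr(f_\mu) - \clr(f_{\mu,i}), h\rangle_{L^2(\lambda)} = 0$ for every $h \in L_{0,i}^2(\lambda)$, where I have used the bijection of the first step to let $h$ range over all of $L_{0,i}^2(\lambda)$ as $g = \clr^{-1}(h)$ ranges over $\mathcal{B}^2_{\{i\}}(\lambda)$. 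Since $L_{0,i}^2(\lambda)$ is a complete, hence closed, subspace of the Hilbert space $L_0^2(\lambda)$, these two properties characterize $\clr(f_{\mu,i})$ as the unique orthogonal projection of $\clr(f_\mu)$ onto $L_{0,i}^2(\lambda)$.

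I do not anticipate a genuine obstacle here; the argument is essentially a bookkeeping exercise in transporting Proposition~\ref{prop:3.1} along an isometry. The one point deserving care is the identification $\clr(\mathcal{B}^2_{\{i\}}(\lambda)) = L_{0,i}^2(\lambda)$, and in particular the surjectivity half, since without it one could only conclude the orthogonality relation for $h$ in the clr image of $\mathcal{B}^2_{\{i\}}(\lambda)$; the argument of the first paragraph settles this. As an alternative route, should one prefer a self-contained proof, one can argue directly as in Proposition~\ref{prop:3.1}: take $h \in L_{0,i}^2(\lambda)$, say $h(\omega_1,\omega_2) = h_1(\omega_1)$ for $i=1$, and compute $\langle \clr(f_\mu) - \clr(f_{\mu,1}), h\rangle_{L^2(\lambda)}$; by Fubini's theorem this equals $\int_{\Omega_1} h_1(\omega_1)\bigl\{\int_{\Omega_2}\clr(f_\mu)(\omega_1,\omega_2)\,d\lambda_2(\omega_2) - \lambda_2(\Omega_2)\clr(f_{\mu,1})(\omega_1)\bigr\}\,d\lambda_1(\omega_1)$, and the braced term vanishes by Lemma~\ref{lem:3.4}. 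Either way, the completeness of $L_{0,i}^2(\lambda)$, already recorded above, supplies existence and uniqueness of the projection.
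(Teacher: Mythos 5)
Your argument is correct and coincides with the paper's intended justification: the corollary is stated there without proof, as an immediate consequence of Proposition~\ref{prop:3.1} and the fact that the clr transformation is an isometric isomorphism, which is exactly the transport argument you spell out (including the identification $\clr(\mathcal{B}^2_{\{i\}}(\lambda)) = L_{0,i}^2(\lambda)$ that the paper leaves implicit).
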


\subsection{Decomposition of \cite{Hron/etal:2020}\label{sec:3.5}}

Proposition~\ref{prop:3.1} sheds new light on the decomposition of $f_\mu$ into the so-called independence and interaction parts defined by \cite{Hron/etal:2020} as 
\begin{equation}
\label{eq:3.2}
f_{\mu,{\rm ind}} = f_{\mu,1} \oplus f_{\mu,2} = f_{\mu,1} f_{\mu,2}, \quad f_{\mu,{\rm int}} = f_\mu \ominus f_{\mu,\rm{ind}}= \frac{f_\mu}{f_{\mu,1} f_{\mu,2}}.
\end{equation}
Indeed, from Proposition~\ref{prop:3.1} and Lemma~\ref{lem:3.2}, $f_{\mu,1}$ and $f_{\mu,2}$ are orthogonal in the Bayes space, i.e.,
\[
\langle f_{\mu,1} ,f_{\mu, 2} \rangle_{\mathcal{B}^2 (\lambda)} = 0, \quad f_{\mu,{\rm ind}}  \in \mathcal{B}_{\rm ind}^2 (\lambda).  
\]
However, more is true: $f_{\mu,{\rm ind}}$ is in fact the unique orthogonal projection of $f_\mu$ onto $\mathcal{B}_{\rm ind}^2 (\lambda)$. This observation is formally stated and proved below.

\begin{proposition}\label{prop:3.2}
Let $f_\mu$ be an arbitrary element of $\mathcal{B}^2 (\lambda)$. Then the independence part $f_{\mu, {\rm ind}}$ is the unique orthogonal projection of $f_\mu$ on $\mathcal{B}_{\rm ind}^2 (\lambda)$.
\end{proposition}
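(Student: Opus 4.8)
The plan is to exploit the structure already established: $\mathcal{B}_{\rm ind}^2(\lambda)$ is a complete subspace of $\mathcal{B}^2(\lambda)$ by Lemma~\ref{lem:3.3}, so a unique orthogonal projection of $f_\mu$ onto it exists. It then suffices to verify that $f_{\mu,{\rm ind}} = f_{\mu,1}\oplus f_{\mu,2}$ has the two defining properties of that projection, namely that $f_{\mu,{\rm ind}}\in\mathcal{B}_{\rm ind}^2(\lambda)$ and that $f_\mu\ominus f_{\mu,{\rm ind}}$ is orthogonal to every element of $\mathcal{B}_{\rm ind}^2(\lambda)$. The first property is immediate from~\eqref{eq:3.2} and the remark following~\eqref{eq:3.1}.

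For the orthogonality, first I would use Lemma~\ref{lem:3.3} to write an arbitrary $h\in\mathcal{B}_{\rm ind}^2(\lambda)$ as $h = h^{(1)}\oplus h^{(2)}$ with $h^{(i)}\in\mathcal{B}^2_{\{i\}}(\lambda)$, and similarly $f_{\mu,{\rm ind}} = f_{\mu,1}\oplus f_{\mu,2}$ with $f_{\mu,i}\in\mathcal{B}^2_{\{i\}}(\lambda)$ by the definitions in Section~\ref{sec:3.3}. By bilinearity of the scalar product it is enough to show that $\langle f_\mu\ominus f_{\mu,{\rm ind}}, h^{(i)}\rangle_{\mathcal{B}^2(\lambda)} = 0$ for $i\in\{1,2\}$. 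For $i=1$, I would split
\[
\langle f_\mu\ominus f_{\mu,1}\ominus f_{\mu,2},\, h^{(1)}\rangle_{\mathcal{B}^2(\lambda)}
= \langle f_\mu\ominus f_{\mu,1},\, h^{(1)}\rangle_{\mathcal{B}^2(\lambda)} - \langle f_{\mu,2},\, h^{(1)}\rangle_{\mathcal{B}^2(\lambda)}.
\]
The first term vanishes because, by Proposition~\ref{prop:3.1}, $f_{\mu,1}$ is the orthogonal projection of $f_\mu$ onto $\mathcal{B}^2_{\{1\}}(\lambda)$, so $f_\mu\ominus f_{\mu,1}$ is orthogonal to all of $\mathcal{B}^2_{\{1\}}(\lambda)$, in particular to $h^{(1)}$. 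The second term vanishes because $f_{\mu,2}\in\mathcal{B}^2_{\{2\}}(\lambda)$ and $h^{(1)}\in\mathcal{B}^2_{\{1\}}(\lambda)$, which are orthogonal by Lemma~\ref{lem:3.2}. The case $i=2$ is symmetric. Since $h\in\mathcal{B}_{\rm ind}^2(\lambda)$ was arbitrary, $f_\mu\ominus f_{\mu,{\rm ind}}$ is orthogonal to $\mathcal{B}_{\rm ind}^2(\lambda)$, and by the characterization of orthogonal projections onto complete subspaces of a Hilbert space, $f_{\mu,{\rm ind}}$ is the unique such projection; the variational formulation $f_{\mu,{\rm ind}} = \argmin_{f\in\mathcal{B}_{\rm ind}^2(\lambda)}\|f_\mu\ominus f\|_{\mathcal{B}^2(\lambda)}$ follows as well.

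There is essentially no serious obstacle here; the result is a clean corollary of Proposition~\ref{prop:3.1} and Lemma~\ref{lem:3.2}. The only point requiring a little care is making sure the decomposition $h = h^{(1)}\oplus h^{(2)}$ is legitimate for \emph{every} $h\in\mathcal{B}_{\rm ind}^2(\lambda)$ and that the two cross-term cancellations invoke exactly the right orthogonality — the projection orthogonality from Proposition~\ref{prop:3.1} for the ``diagonal'' term and the subspace orthogonality from Lemma~\ref{lem:3.2} for the ``off-diagonal'' term. One could alternatively phrase the whole argument in the clr space via Corollary~\ref{cor:3.1}, using that $\clr(f_{\mu,{\rm ind}}) = \clr(f_{\mu,1}) + \clr(f_{\mu,2})$ is the sum of the projections of $\clr(f_\mu)$ onto the orthogonal subspaces $L_{0,1}^2(\lambda)$ and $L_{0,2}^2(\lambda)$, hence the projection onto their direct sum; but the Bayes-space argument above is self-contained and equally short.
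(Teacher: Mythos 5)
Your proof is correct and follows essentially the same route as the paper's: both decompose an arbitrary element of $\mathcal{B}_{\rm ind}^2 (\lambda)$ into its $\mathcal{B}^2_{ \{ 1 \} } (\lambda)$ and $\mathcal{B}^2_{ \{ 2 \} } (\lambda)$ components, use Proposition~\ref{prop:3.1} to kill the matching term, and use the orthogonality of the two marginal subspaces (Lemmas~\ref{lem:3.1}--\ref{lem:3.2}) to kill the cross term. The only difference is cosmetic: the paper verifies $\langle f_\mu , g \rangle_{\mathcal{B}^2 (\lambda)} = \langle f_{\mu,{\rm ind}} , g \rangle_{\mathcal{B}^2 (\lambda)}$ by computing both sides via Lemma~\ref{lem:3.1}, whereas you cancel the cross terms directly, which is an equivalent bookkeeping of the same argument.
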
 

\begin{proof}
Because $\mathcal{B}_{\rm ind}^2 (\lambda)$ is a complete subspace of $\mathcal{B}^2 (\lambda)$ by Lemma~\ref{lem:3.2}, it suffices to show that for any $g \in \mathcal{B}_{\rm ind}^2 (\lambda)$, 
\[
\langle f_\mu \ominus f_{\mu, {\rm ind}}, g \rangle_{\mathcal{B}^2 (\lambda)} = 0. 
\]
To this end, write $g \prop2 g_1 g_2$, where $g_1$ is a $\lambda_1$-density and $g_2$ is a $\lambda_2$-density. Let $g_1^* \prop2 g_1 \cdot 1 $ and $g_2^* \prop2 1 \cdot g_2$ so that $g = g_1^* \oplus g_2^*$.  Using well-known properties of scalar products, the claim follows if one can show that
\[
\langle f_\mu , g \rangle_{\mathcal{B}^2 (\lambda)}  =  \langle  f_{\mu, {\rm ind}}, g \rangle_{\mathcal{B}^2 (\lambda)}. 
\]
On the one hand, Lemma~\ref{lem:3.1} implies that 
\[
\langle  f_{\mu, {\rm ind}}, g \rangle_{\mathcal{B}^2 (\lambda)} = \lambda_2(\Omega_2) \langle  f_{\mu, 1}, g_1 \rangle_{\mathcal{B}^2 (\lambda_1)} + \lambda_1(\Omega_1) \langle  f_{\mu, 2}, g_2 \rangle_{\mathcal{B}^2 (\lambda_2)}.
\]
On the other hand, $ \langle f_\mu , g \rangle_{\mathcal{B}^2 (\lambda)} =  \langle f_\mu , g_1^* \rangle_{\mathcal{B}^2 (\lambda)} +  \langle f_\mu , g_2^* \rangle_{\mathcal{B}^2 (\lambda)}$. However, Proposition~\ref{prop:3.1} implies that for each $i \in \{1, 2 \}$, one has
\[
\langle f_\mu , g_i^* \rangle_{\mathcal{B}^2 (\lambda)} = \langle f_\mu \ominus f_{\mu,i} , g_i^* \rangle_{\mathcal{B}^2 (\lambda)} + \langle  f_{\mu,i} , g_i^* \rangle_{\mathcal{B}^2 (\lambda)} = \langle  f_{\mu,i} , g_i^* \rangle_{\mathcal{B}^2 (\lambda)}.
\]
From Eq.~\eqref{eq:2.3}, it is immediate that $\langle  f_{\mu,i} , g_i^* \rangle_{\mathcal{B}^2 (\lambda)} = \lambda_{3-i} (\Omega_{3-i}) \langle  f_{\mu,i} , g_i \rangle_{\mathcal{B}^2 (\lambda_i)}$ and hence the argument is complete.
\end{proof}

Proposition~\ref{prop:3.2} in turn implies that by construction, the interaction part $f_{\mu,{\rm int}}$ introduced in Eq.~\eqref{eq:3.2} lies in the orthogonal complement of $\mathcal{B}_{\rm ind}^2 (\lambda)$, viz.
\begin{equation}
\label{eq:3.3}
\mathcal{B}_{\rm int}^2 (\lambda) = \{ f_\mu \in \mathcal{B}^2 (\lambda) \; | \; \forall_{f_{\nu} \in \mathcal{B}_{\rm ind}^2 (\lambda) } \; \; \langle f_\mu , f_\nu \rangle_{\mathcal{B}^2 (\lambda)} = 0 \},
\end{equation}
which can be termed the interaction space. 

\begin{corollary}
\label{cor:3.2}
 Let $f_\mu$ be an arbitrary element of $\mathcal{B}^2 (\lambda)$. Then the interaction part $f_{\mu, {\rm int}}$ is the unique orthogonal projection of $f_\mu$ on $\mathcal{B}_{\rm int}^2 (\lambda)$.
\end{corollary}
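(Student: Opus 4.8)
The plan is to read off the statement from the abstract Hilbert-space decomposition $H = M \oplus M^\perp$, with $M = \mathcal{B}_{\rm ind}^2(\lambda)$. By its very definition in Eq.~\eqref{eq:3.3}, the interaction space $\mathcal{B}_{\rm int}^2(\lambda)$ is the orthogonal complement of $\mathcal{B}_{\rm ind}^2(\lambda)$ inside the Hilbert space $\mathcal{B}^2(\lambda)$. Since orthogonal complements are automatically closed, $\mathcal{B}_{\rm int}^2(\lambda)$ is a complete subspace, so a unique orthogonal projection of $f_\mu$ onto it exists; it then suffices to identify that projection with $f_{\mu,{\rm int}}$.

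First I would check that $f_{\mu,{\rm int}}$ actually belongs to $\mathcal{B}_{\rm int}^2(\lambda)$. By Proposition~\ref{prop:3.2}, $f_{\mu,{\rm ind}}$ is the orthogonal projection of $f_\mu$ onto $\mathcal{B}_{\rm ind}^2(\lambda)$, which means precisely that the residual $f_\mu \ominus f_{\mu,{\rm ind}}$ is orthogonal to every element of $\mathcal{B}_{\rm ind}^2(\lambda)$. Because $f_{\mu,{\rm int}} = f_\mu \ominus f_{\mu,{\rm ind}}$ by Eq.~\eqref{eq:3.2}, this is exactly the membership condition in Eq.~\eqref{eq:3.3}, so $f_{\mu,{\rm int}} \in \mathcal{B}_{\rm int}^2(\lambda)$.

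Next I would verify the defining property of the orthogonal projection onto $\mathcal{B}_{\rm int}^2(\lambda)$, namely that $f_\mu \ominus f_{\mu,{\rm int}}$ is orthogonal to $\mathcal{B}_{\rm int}^2(\lambda)$. Using the bilinearity of $\langle\cdot,\cdot\rangle_{\mathcal{B}^2(\lambda)}$ with respect to $\oplus$ and $\ominus$, one has $f_\mu \ominus f_{\mu,{\rm int}} = f_{\mu,{\rm ind}} \in \mathcal{B}_{\rm ind}^2(\lambda)$, and every $g \in \mathcal{B}_{\rm int}^2(\lambda)$ is, by Eq.~\eqref{eq:3.3}, orthogonal to all of $\mathcal{B}_{\rm ind}^2(\lambda)$, hence $\langle f_\mu \ominus f_{\mu,{\rm int}}, g\rangle_{\mathcal{B}^2(\lambda)} = \langle f_{\mu,{\rm ind}}, g\rangle_{\mathcal{B}^2(\lambda)} = 0$. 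By the uniqueness of the orthogonal projection onto a complete subspace of a Hilbert space, $f_{\mu,{\rm int}}$ is therefore the orthogonal projection of $f_\mu$ onto $\mathcal{B}_{\rm int}^2(\lambda)$, and in particular $f_{\mu,{\rm int}} = \argmin_{f \in \mathcal{B}_{\rm int}^2(\lambda)} \|f_\mu \ominus f\|_{\mathcal{B}^2(\lambda)}$.

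I do not anticipate a genuine obstacle: the argument is just the identity ``projection onto $M^\perp$ equals $x \mapsto x \ominus P_M x$'' specialized to $M = \mathcal{B}_{\rm ind}^2(\lambda)$, whose completeness was established in Lemma~\ref{lem:3.3} and whose associated projection was identified in Proposition~\ref{prop:3.2}. The only minor points needing care are noting that $\mathcal{B}_{\rm int}^2(\lambda)$ inherits completeness as an orthogonal complement, and handling the $\oplus/\ominus$ arithmetic so that $f_\mu \ominus f_{\mu,{\rm int}}$ is correctly recognized as $f_{\mu,{\rm ind}}$.
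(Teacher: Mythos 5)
Your argument is correct and is essentially the proof the paper intends: the corollary is left as an immediate consequence of Proposition~\ref{prop:3.2} and the definition of $\mathcal{B}_{\rm int}^2(\lambda)$ in Eq.~\eqref{eq:3.3} as the orthogonal complement of $\mathcal{B}_{\rm ind}^2(\lambda)$, which is exactly the route you take. Your two checks --- that $f_{\mu,{\rm int}}=f_\mu\ominus f_{\mu,{\rm ind}}$ lies in $\mathcal{B}_{\rm int}^2(\lambda)$ by Proposition~\ref{prop:3.2}, and that the residual $f_\mu\ominus f_{\mu,{\rm int}}=f_{\mu,{\rm ind}}$ is orthogonal to $\mathcal{B}_{\rm int}^2(\lambda)$ by construction --- together with the completeness of an orthogonal complement, are precisely what is needed.
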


Put together, the above developments make it possible to recover the following orthogonal decomposition of bivariate densities discovered by \cite{Hron/etal:2020}:
\begin{equation}
\label{eq:3.4}
f_\mu = f_{\mu,1} \oplus f_{\mu,2} \oplus  f_{\mu,{\rm int}} = f_{\mu,{\rm ind}} \oplus f_{\mu,{\rm int}}.
\end{equation}

\subsection{Consequences of the orthogonal decomposition \eqref{eq:3.4}\label{sec:3.6}}

\cite{Hron/etal:2020} study the consequences of this decomposition in detail and derive a number of results about it from first principles. As described below, their main findings can be recovered as simple consequences of Propositions~\ref{prop:3.1} and \ref{prop:3.2}.

\begin{proposition}
\label{prop:3.3}
For any $f_\mu \in \mathcal{B}^2 (\lambda)$, the following statements hold true.
\begin{enumerate}
\item[(i)] 
\emph{Pythagoras' Theorem:} One always has
\[
\| f_\mu\|^2_{\mathcal{B}^2 (\lambda)} = \| f_{\mu,\rm{ind}}\|^2_{\mathcal{B}^2 (\lambda)} + \| f_{\mu,\rm{int}}\|^2_{\mathcal{B}^2 (\lambda)} = \| f_{\mu,1}\|^2_{\mathcal{B}^2 (\lambda)} + \| f_{\mu,2}\|^2_{\mathcal{B}^2 (\lambda)}+ \| f_{\mu,\rm{int}}\|^2_{\mathcal{B}^2 (\lambda)}.
\]
\item[(ii)] 
\emph{Margin-free property of the interaction part:} 
The geometric margins $f_{\mu, {\rm int}, 1}$ and $f_{\mu, {\rm int}, 2}$ of  $f_{\mu, {\rm int}}$ satisfy 
\[
f_{\mu, {\rm int}, 1} \prop2 1, \quad f_{\mu, {\rm int}, 2} \prop2 1.
\]
\item[(iii)] 
\emph{Independence:}
If $f_\mu = f_{\mu_1} f_{\mu_2} \in \mathcal{B}^2_{\rm ind}(\lambda)$, then $f_{\mu,1} \prop2 f_{\mu_1}$, $f_{\mu,2} \prop2 f_{\mu_2}$, and 
\[
f_\mu \in \mathcal{B}^2_{\rm ind}(\lambda) \quad \Longleftrightarrow \quad f_\mu \prop2 f_{\mu,1} \oplus f_{\mu,2} \quad \Longleftrightarrow \quad f_{\mu, {\rm int}} \prop2 1.
\] 
\item[(iv)] 
\emph{Yule perturbation:} 
For $g \prop2 g_1 g_2 \in  \mathcal{B}^2_{\rm ind}(\lambda)$, set $h = f_\mu \oplus g$. Then the independence and interaction parts of $h$ satisfy 
\[
h_{\rm ind} = f_{\mu, {\rm ind}} \oplus g, \quad h_{\rm int} = f_{\mu, {\rm int}}.
\]
In particular, the geometric margins of $h$ are $h_1\prop2 f_{\mu,1} g_1$  and $h_2 \prop2 f_{\mu,2} g_2$.
\end{enumerate}
\end{proposition}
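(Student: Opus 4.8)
The plan is to obtain all four parts as formal consequences of the projection interpretation established in Propositions~\ref{prop:3.1}--\ref{prop:3.2} and Corollary~\ref{cor:3.2}, together with Lemmas~\ref{lem:3.1}--\ref{lem:3.4}, so that essentially no argument from first principles is needed. For part~(i), I would first invoke Corollary~\ref{cor:3.2}, which places $f_{\mu,{\rm int}}$ in $\mathcal{B}^2_{\rm int}(\lambda)$, the orthogonal complement of $\mathcal{B}^2_{\rm ind}(\lambda)$, while $f_{\mu,{\rm ind}} \in \mathcal{B}^2_{\rm ind}(\lambda)$; hence these two are orthogonal in $\mathcal{B}^2(\lambda)$ and Pythagoras' theorem applied to $f_\mu = f_{\mu,{\rm ind}} \oplus f_{\mu,{\rm int}}$ gives the first equality. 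For the second, I would apply Pythagoras once more to $f_{\mu,{\rm ind}} = f_{\mu,1} \oplus f_{\mu,2}$, the two summands being orthogonal in $\mathcal{B}^2(\lambda)$ by Lemma~\ref{lem:3.2}.

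For part~(ii), the key point is that $\mathcal{B}^2_{\{i\}}(\lambda) \subseteq \mathcal{B}^2_{\rm ind}(\lambda)$ by Lemma~\ref{lem:3.3}, so that $f_{\mu,{\rm int}} \in \mathcal{B}^2_{\rm int}(\lambda)$ is orthogonal to $\mathcal{B}^2_{\{i\}}(\lambda)$; since by Proposition~\ref{prop:3.1} the geometric margin $f_{\mu,{\rm int},i}$ is the orthogonal projection of $f_{\mu,{\rm int}}$ onto $\mathcal{B}^2_{\{i\}}(\lambda)$, and the projection of a vector orthogonal to a subspace is the neutral element, one concludes $f_{\mu,{\rm int},i} \prop2 1$. (Alternatively, $\clr(f_{\mu,{\rm int},i})$ may be computed directly from Lemma~\ref{lem:3.4} using $\clr(f_{\mu,{\rm int}}) = \clr(f_\mu) - \clr(f_{\mu,1}) - \clr(f_{\mu,2})$ and the fact that $\clr(f_{\mu,j})$ integrates to $0$ over $\Omega_j$ for $j \neq i$.) For part~(iii), if $f_\mu \prop2 f_{\mu_1} f_{\mu_2} \in \mathcal{B}^2_{\rm ind}(\lambda)$ then $f_\mu$ coincides with its own orthogonal projection onto $\mathcal{B}^2_{\rm ind}(\lambda)$, whence $f_{\mu,{\rm ind}} \prop2 f_\mu$ and $f_{\mu,{\rm int}} \prop2 1$ by Proposition~\ref{prop:3.2}. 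To identify the geometric margins, I would apply Lemma~\ref{lem:3.4} to $\clr(f_\mu) = \clr(f_{\mu_1}) + \clr(f_{\mu_2})$, reading the summands as embedded one-variable clr transforms: integrating over $\Omega_2$ annihilates $\clr(f_{\mu_2})$ and leaves $\clr(f_{\mu,1}) = \clr(f_{\mu_1})$, i.e.\ $f_{\mu,1} \prop2 f_{\mu_1}$, and symmetrically $f_{\mu,2} \prop2 f_{\mu_2}$. The chain of equivalences is then formal: $f_\mu \in \mathcal{B}^2_{\rm ind}(\lambda)$ iff $f_\mu$ equals its projection $f_{\mu,{\rm ind}} = f_{\mu,1} \oplus f_{\mu,2}$ (forward because an element of a subspace is its own projection, backward because $\mathcal{B}^2_{\{1\}}(\lambda) \oplus \mathcal{B}^2_{\{2\}}(\lambda) = \mathcal{B}^2_{\rm ind}(\lambda)$ by Lemma~\ref{lem:3.3}), and this is equivalent to $f_{\mu,{\rm int}} = f_\mu \ominus f_{\mu,{\rm ind}} \prop2 1$ by the very definition~\eqref{eq:3.2}.

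For part~(iv), I would use that orthogonal projection onto a complete subspace is a linear map with respect to $\oplus$ and $\odot$, which is immediate from the clr isomorphism and standard Hilbert space theory. Writing $P$ and $P_i$ for the orthogonal projections onto $\mathcal{B}^2_{\rm ind}(\lambda)$ and $\mathcal{B}^2_{\{i\}}(\lambda)$, one has $Pg = g$ and $P_i g = g_i^*$, where $g_i^*$ is the embedding of $g_i$, because $g \in \mathcal{B}^2_{\rm ind}(\lambda)$ has geometric margins $g_i$ by part~(iii). Linearity then yields $h_{\rm ind} = P(f_\mu \oplus g) = f_{\mu,{\rm ind}} \oplus g$ and $h_i = P_i(f_\mu \oplus g) = f_{\mu,i} \oplus g_i^*$, that is $h_i \prop2 f_{\mu,i} g_i$; and finally $h_{\rm int} = h \ominus h_{\rm ind} = (f_\mu \oplus g) \ominus (f_{\mu,{\rm ind}} \oplus g) = f_\mu \ominus f_{\mu,{\rm ind}} = f_{\mu,{\rm int}}$.

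The only genuine subtlety I anticipate is bookkeeping: keeping the embeddings $\mathcal{B}^2_{\{i\}}(\lambda)$ distinct from the one-variable spaces $\mathcal{B}^2(\lambda_i)$, carrying along the scaling factors $\lambda_i(\Omega_i)$ that Lemma~\ref{lem:3.1} introduces, and, in part~(iv), making explicit that orthogonal projection commutes with perturbation. None of this is deep; once the projection picture of Section~\ref{sec:3.4} is in place, the whole proposition is essentially formal.
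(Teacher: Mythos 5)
Your proposal is correct and follows essentially the same route as the paper: the authors likewise treat (i), (ii), (iv), and the equivalences in (iii) as immediate consequences of the projection interpretation in Propositions~\ref{prop:3.1} and \ref{prop:3.2}, and only write out the identification $f_{\mu,i}\prop2 f_{\mu_i}$ in (iii), which they verify by the same integration argument you give (they compute directly with $\exp$ and $\ln$ where you pass through Lemma~\ref{lem:3.4} in the clr space, but the two computations are identical up to the clr isomorphism).
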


\begin{proof}
The only claim that is not immediate from Propositions~\ref{prop:3.1} and \ref{prop:3.2} is the statement in (iii) that if $f_\mu = f_{\mu_1} f_{\mu_2} \in \mathcal{B}^2_{\rm ind}(\lambda)$, then $f_{\mu,1} \prop2 f_{\mu_1}$ and $f_{\mu,2} \prop2 f_{\mu_2}$.  It suffices to verify the first of these claims. Given that $f_\mu = f_{\mu_1} f_{\mu_2}$, one has
\[
f_{\mu, 1} = \exp \left[ \frac{1}{\lambda_2(\Omega_2)} \int_{\Omega_2} (\ln f_{\mu_1} + \ln f_{\mu_2}) d\lambda_2 \right] \prop2 \exp \left\{ \frac{1}{\lambda_2(\Omega_2)} \int_{\Omega_2} \ln (f_{\mu_1}) d\lambda_2 \right\} = f_{\mu_1},
\]
as for any $(\omega_1,\omega_2) \in \Omega$, $f_{\mu_1}$ depends on $\omega_1$ only while $f_{\mu_2}$ depends on $\omega_2$ only.
\end{proof}

The orthogonal decomposition in Eq.~\eqref{eq:3.4} has an obvious analog in the clr space, also shown by \cite{Hron/etal:2020}, viz.
\[
\clr (f_{\mu,{\rm ind}}) = \clr (f_{\mu,1}) + \clr (f_{\mu,2}), \quad \clr (f_{\mu,{\rm int}} ) = \clr (f_\mu) - \clr (f_{\mu,1}) - \clr (f_{\mu,2}).
\]
Pythagoras' theorem in Proposition~\ref{prop:3.3} (i) then becomes
\begin{align*}
\| \clr (f_\mu)\|^2_{L^2 (\lambda)} &= \|\clr (f_{\mu,\rm{ind}})\|^2_{L^2 (\lambda)} + \|\clr (f_{\mu,\rm{int}})\|^2_{L^2 (\lambda)} \\
&= \| \clr (f_{\mu,1})\|^2_{L^2 (\lambda)} + \|\clr (f_{\mu,2})\|^2_{L^2 (\lambda)}+ \|\clr (f_{\mu,\rm{int}})\|^2_{L^2 (\lambda)}.
\end{align*}
Statement (ii) in Proposition~\ref{prop:3.3} implies, among others, that if one were to decompose $f_{\mu, \rm{int}}$ into an independence and interaction part, the independence part would be proportional to $1$ while the interaction part would again be $f_{\mu, \rm{int}}$. This is why \cite{Hron/etal:2020} regard $f_{\mu, \rm{int}}$ as a margin-free component of $f_\mu$.

Finally, for any element $f_\mu \in \mathcal{B}^2 (\lambda)$ which corresponds to a finite measure $\mu$, i.e.,  
\begin{equation}\label{eq:3.5}
\int_{\Omega} f_\mu d \lambda < \infty.
\end{equation}
the (standard) arithmetic margins of $f_\mu$ are defined, for all $\omega_1 \in \Omega_1$ and $\omega_2 \in \Omega_2$, by
\[
f_{\mu [1]} (\omega_1) = \int_{\Omega_2} f_\mu (\omega_1, \omega_2) d \lambda (\omega_1, \omega_2) , \quad f_{\mu [2]} (\omega_2) = \int_{\Omega_1} f_\mu (\omega_1, \omega_2) d \lambda (\omega_1, \omega_2).
\]
If in addition $f_\mu \in \mathcal{B}^2_{\rm ind}(\lambda)$, i.e., $f_\mu = f_{\mu_1} f_{\mu_2}$, these arithmetic margins are $f_{\mu [1]} = f_{\mu_1}$ and  $f_{\mu [2]} = f_{\mu_2}$. Proposition~\ref{prop:3.3}~(iii) then has the following immediate implication. 

\begin{corollary}
\label{cor:3.3}
If $f_\mu \in \mathcal{B}_{\rm ind}^2 (\lambda)$ is such that \eqref{eq:3.5} holds, then its arithmetic and geometric margins coincide, i.e., $f_{\mu, 1} \prop2 f_{\mu [1]}$ and $f_{\mu, 2} \prop2 f_{\mu [2]}$.
\end{corollary}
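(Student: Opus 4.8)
The plan is to chain two facts, each of which routes through the product representation of $f_\mu$: the identification of the geometric margins furnished by Proposition~\ref{prop:3.3}~(iii), and the elementary evaluation of the arithmetic margins of a product density already recorded just above the statement.

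First I would unfold the hypothesis: since $f_\mu \in \mathcal{B}_{\rm ind}^2 (\lambda)$, by definition \eqref{eq:3.1} there are a $\lambda_1$-density $f_{\mu_1} \in \mathcal{B}^2(\lambda_1)$ and a $\lambda_2$-density $f_{\mu_2} \in \mathcal{B}^2(\lambda_2)$ with $f_\mu \prop2 f_{\mu_1} f_{\mu_2}$. Proposition~\ref{prop:3.3}~(iii) then gives at once that $f_{\mu,1} \prop2 f_{\mu_1}$ and $f_{\mu,2} \prop2 f_{\mu_2}$.

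Next I would compute the arithmetic margins. By Tonelli's theorem and the product form of $f_\mu$, for every $\omega_1 \in \Omega_1$,
\[
f_{\mu[1]}(\omega_1) = \int_{\Omega_2} f_{\mu_1}(\omega_1)\, f_{\mu_2}(\omega_2)\, d\lambda_2(\omega_2) = f_{\mu_1}(\omega_1) \int_{\Omega_2} f_{\mu_2}\, d\lambda_2,
\]
and symmetrically $f_{\mu[2]} = f_{\mu_2}\,(\int_{\Omega_1} f_{\mu_1}\, d\lambda_1)$. The only point warranting a word is that these constants are finite positive reals: positivity is clear because $f_{\mu_1}, f_{\mu_2}$ are strictly positive $\lambda_i$-a.e., and finiteness follows from the standing assumption \eqref{eq:3.5}, since by Tonelli $\int_\Omega f_\mu\, d\lambda = (\int_{\Omega_1} f_{\mu_1}\, d\lambda_1)(\int_{\Omega_2} f_{\mu_2}\, d\lambda_2) < \infty$ forces each of the two strictly positive factors to be finite. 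Hence $f_{\mu[1]} \prop2 f_{\mu_1}$ and $f_{\mu[2]} \prop2 f_{\mu_2}$.

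Combining the two displays, $f_{\mu,1} \prop2 f_{\mu_1} \prop2 f_{\mu[1]}$ and $f_{\mu,2} \prop2 f_{\mu_2} \prop2 f_{\mu[2]}$, which is the assertion. There is no genuine obstacle here; the argument is essentially a bookkeeping exercise, and the one piece of care needed is precisely the finiteness remark above, which explains why \eqref{eq:3.5} is imposed. Note also that all equalities are read modulo $\prop2$, so the non-uniqueness of the factorization $f_\mu \prop2 f_{\mu_1} f_{\mu_2}$ is immaterial.
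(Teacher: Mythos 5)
Your proposal is correct and follows exactly the paper's route: the paper likewise notes that for a product density the arithmetic margins are the factors and then invokes Proposition~\ref{prop:3.3}~(iii) to identify the geometric margins with the same factors, calling the corollary an immediate consequence. Your added remark on the finiteness and positivity of the normalizing constants is a sensible bit of extra care but does not change the argument.
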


\subsection{Illustration\label{sec:3.7}}

Before proceeding with a multivariate extension of decomposition \eqref{eq:3.4}, it may be useful to illustrate the concepts introduced here in a simple case where algebraically closed expressions are available. For additional numerical examples, see, e.g., \cite{Hron/etal:2020}.

\begin{example}
\label{ex:3.1}
Consider the following three-parameter bivariate beta distribution density studied by \cite{Jones:2002} corresponding to a special case of the work of~\cite{Libby/Novick:1982}. For arbitrary $\alpha_0, \alpha_1, \alpha_2 \in (0, \infty)$ and all $x_1, x_2 \in (0, 1)$, let 
\[
f_\mu (x_1, x_2) = \frac{1}{B(\alpha_0, \alpha_1, \alpha_2)} \, \frac{x_1^{\alpha_1-1}(1-x_1)^{\alpha_0 + \alpha_2-1}x_2^{\alpha_2-1}(1-x_2)^{\alpha_0+\alpha_1-1}}{(1-x_1x_2)^{\alpha_0 + \alpha_1 + \alpha_2}} \, ,
\]
where $B(\alpha_0, \alpha_1, \alpha_2)  = \Gamma (\alpha_0) \Gamma (\alpha_1) \Gamma (\alpha_2) / \Gamma (\alpha_0 + \alpha_1 + \alpha_2)$ is the generalized beta function. The univariate margins are beta distributions with parameters $(\alpha_1,\alpha_0)$ and $(\alpha_2, \alpha_0)$, respectively; the limiting case $\alpha_0 + \alpha_1 + \alpha_2 \to 0$ corresponds to independence \citep{Jones:2002} .

Using Lebesgue measure on $[0,1]^2$ as the reference measure $\lambda$, one finds through direct calculations that, for all $x_1, x_2 \in (0, 1)$,
\begin{align*}
\clr (f_\mu) (x_1,x_2) &= \ln \{f_\mu (x_1, x_2) \} -  \int_\Omega \ln \{ f_\mu (x_1, x_2)\} dx_1 dx_2 \\
& = (\alpha_1 - 1) \ln (x_1) + (\alpha_0 + \alpha_2 - 1) \ln (1 - x_1) \\
& \qquad + (\alpha_2 - 1) \ln (x_2) + (\alpha_0 + \alpha_1-1) \ln (1 - x_2)\\
& \qquad + (\alpha_0 + \alpha_1 + \alpha_2) \{ \pi^2/6 - \ln (1 - x_1x_2)\} - 4.
\end{align*}
The corresponding geometric marginals are then given, for all $x_1, x_2 \in (0, 1)$ and $j \in \{ 1, 2 \}$, by
\begin{multline*}
\clr (f_{\mu, j}) (x_j) = (\alpha_j-1) \ln (x_j) + (\alpha_0 + \alpha_j - 1) \ln (1 - x_j)\\
+ (\alpha_0 + \alpha_1 + \alpha_2) \left\{ \frac{\pi^2}{6} + \frac{(1 - x_1) \ln (1 - x_1)}{x_1} \right\} - 2.
\end{multline*}
These margins can then be used directly to compute the independent and the interactions terms, which are respectively given, for all $x_1, x_2 \in (0, 1)$, by
\begin{align*}
\clr (f_{\rm ind})(x_1, x_2) & = (\alpha_1 - 1) \ln (x_1) + (\alpha_2 - 1) \ln (x_2) \\
& \qquad + (\alpha_0 + \alpha_2 - 1) \ln (1 - x_1) + (\alpha_0 + \alpha_1-1) \ln (1 - x_2)\\
& \qquad + (\alpha_0 + \alpha_1 + \alpha_2) \left\{ \frac{(1 - x_1) \ln (1 - x_1)}{x_1} + \frac{(1 - x_2) \ln (1 - x_2)}{x_2} \right\} \\
& \qquad + (\alpha_0 + \alpha_1 + \alpha_2) \, \frac{\pi^2}{3} - 4
\end{align*}
and
\begin{multline*}
\clr (f_{\rm int})(x_1, x_2) =-(\alpha_0 + \alpha_1 + \alpha_2) \left\{ \frac{(1 - x_1) \ln (1 - x_1)}{x_1} + \frac{(1 - x_2) \ln (1 - x_2)}{x_2}\right\} \\
 -(\alpha_0 + \alpha_1 + \alpha_2 ) \left\{ \frac{\pi^2}{6} + \ln (1 - x_1x_2) \right\}.
\end{multline*}
It is readily seen that $\clr (f_{\rm int})(x_1, x_2) \to 0$ as $\alpha_0 + \alpha_1 + \alpha_2 \to 0$, as implied by Proposition~\ref{prop:3.3} (iii).
\end{example}

\section{Extension to the multivariate case\label{sec:4}}

In this section, representation \eqref{eq:3.4} of \cite{Hron/etal:2020} is extended to the multivariate case through an orthogonal decomposition of an arbitrary $d$-variate density $f_\mu \in \mathcal{B}^2 (\lambda)$ into a direct sum of $2^d-1$ mutually orthogonal parts, viz.
\begin{equation}
\label{eq:4.1}
f_\mu = \bigoplus_{I \subseteq D} g_{\mu, I}
\end{equation}
indexed by non-empty subsets $I \subseteq D = \{ 1, \ldots, d \}$, where each term $g_{\mu,I}$ depends only on the variables with indices in the set $I$. 

A multivariate extension of this sort was recently derived from first principles by \cite{Facevicova/etal:2022} in the discrete case. In contrast, the decomposition proposed here applies to any density $f_\mu \in \mathcal{B}^2 (\lambda)$ with finite reference measure $\lambda$. This is achieved with the help of the Hoeffding--Sobol decomposition formula due to \cite{Kuo/etal:2010}; see also \cite{Mercadier/etal:2022}. 

\subsection{Notation\label{sec:4.1}}

Let $(\Omega, \mathcal{A})$ be a $d$-dimensional product space and let $\lambda$ be a finite product reference measure. Specifically, suppose that for $i \in D$, $(\Omega_i, \mathcal{A}_i)$ is a measurable space and $\lambda_i$ is a finite, positive, real-valued measure on it. Set
\[
\Omega = \Omega_1 \times \cdots \times \Omega_d, \quad \mathcal{A} = \mathcal{A}_1 \otimes \cdots \otimes \mathcal{A}_d, \quad \lambda = \lambda_1 \otimes \cdots \otimes \lambda_d.
\]
As in Section~\ref{sec:3}, let $\mathcal{B}(\lambda)$ be the Bayes space on $(\Omega,\mathcal{A})$ and $\mathcal{B}^2 (\lambda)$ be the subspace of $\lambda$-densities whose logarithm is square-integrable.

For arbitrary non-empty $I \subseteq D$, let
\[
\Omega_I = \bigtimes_{i \in I} \Omega_i, \quad \mathcal{A}_I = \bigotimes_{i \in I} \mathcal{A}_i, \quad \lambda_I = \bigotimes_{i \in I} \lambda_i. 
\]
Further, let $\mathcal{B}(\lambda_I)$ denote the Bayes space on $(\Omega_I, \mathcal{A}_I)$ with reference measure $\lambda_I$, and let $\mathcal{B}^2(\lambda_I)$ be its subspace of $\lambda_I$-densities whose logarithm is square-integrable. By analogy with Eq.~\eqref{eq:2.4}, $L^2_0(\lambda_I)$ denotes the subspace of functions that integrate to $0$ of the $L_2$ space $L_2(\lambda_I)$ on $\Omega_I$ with reference measure $\lambda_I$.

Finally, for arbitrary $I \subseteq D$, $I \neq \varnothing$, let $\mathcal{B}^2_{ I }(\lambda)$ be the subspace of $\mathcal{B}^2 (\lambda)$ given by
\[
\mathcal{B}_I^2 (\lambda)=\Bigl\{\mu\in\mathcal{B}^2 (\lambda)\; \Bigl | \;\exists_{\mu_I \in\mathcal{B}^2(\lambda_I)} \forall_{A_i \in \mathcal{A}_i} \; \mu\Bigl(\bigtimes_{i \in D} A_i\Bigr)= \prod_{i \not \in I}\lambda_i(A_i) \mu_I \Bigl(\bigtimes_{i \in I} A_i \Bigr) \Bigr\},
\]
so that any $\lambda$-density in $\mathcal{B}_I^2 (\lambda)$ is a function of arguments with indices in $I$ only. Note that when $D = I$, $\mathcal{B}^2_D(\lambda)=\mathcal{B}^2 (\lambda)$. By convention, $\mathcal{B}^2_\varnothing(\lambda)$ is the trivial subspace that contains only the neutral element $\lambda$. 

\subsection{Higher-order geometric margins\label{sec:4.2}}

The following result is a multivariate analog of Lemmas~\ref{lem:3.2} and \ref{lem:3.4}. It plays a key role in the construction of the decomposition of multivariate densities $f_\mu \in \mathcal{B}^2 (\lambda)$. 

\begin{proposition}
\label{prop:4.1}
Consider an arbitrary $\lambda$-density $f_\mu \in \mathcal{B}^2 (\lambda)$ and an arbitrary non-empty set $I \subsetneq D$. Then  $\mathcal{B}_I^2 (\lambda)$ is a complete subspace of $\mathcal{B}^2 (\lambda)$ and the unique orthogonal projection of $f_\mu$ onto $\mathcal{B}_I^2 (\lambda)$ is given by
\[
f_{\mu, I} = \exp \left\{ \frac{1}{\lambda_{D \setminus I}(\Omega_{D \setminus I})}  \int_{\Omega_{D \setminus I}} \ln (f_\mu) d \lambda_{D \setminus I} \right\}.
\]
Furthermore, 
\[
\clr (f_{\mu, I}) = \frac{1}{\lambda_{D \setminus I}(\Omega_{D \setminus I})} \int_{\Omega_{D \setminus I}} \clr (f_\mu) d \lambda _{D \setminus I}.
\]
\end{proposition}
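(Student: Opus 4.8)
The plan is to transport the whole statement through the clr isomorphism and thereby reduce it to a statement about orthogonal projection onto a closed subspace of the ordinary Hilbert space $L^2(\lambda)$. Recall from Section~\ref{sec:2.4}, and from the computation underlying the proof of Lemma~\ref{lem:3.1}, that clr is an isometric isomorphism from $\mathcal{B}^2 (\lambda)$ onto $L_0^2 (\lambda)$, so that $\langle f,g\rangle_{\mathcal{B}^2 (\lambda)} = \langle \clr(f),\clr(g)\rangle_{L^2(\lambda)}$ for all $f,g\in\mathcal{B}^2 (\lambda)$. Introduce the subspace $L^2_{0,I}(\lambda)$ of $L^2_0(\lambda)$ consisting of those functions that depend on the coordinates in $I$ only, exactly as was done for $|I|=1$ before Corollary~\ref{cor:3.1}.

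First I would check that clr maps $\mathcal{B}^2_I(\lambda)$ onto $L^2_{0,I}(\lambda)$. If $f\prop2 g_I$ with $g_I$ a $\lambda_I$-density in $\mathcal{B}^2(\lambda_I)$, then $\ln f$ depends on $\omega_I$ only and, since $\lambda(\Omega)=\lambda_I(\Omega_I)\,\lambda_{D\setminus I}(\Omega_{D\setminus I})$, the constant subtracted in the clr transformation equals $\lambda_I(\Omega_I)^{-1}\int_{\Omega_I}\ln g_I\,d\lambda_I$; hence $\clr(f)$, viewed as a function on $\Omega_I$, is precisely $\clr(g_I)\in L^2_0(\lambda_I)$. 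Conversely, any $h\in L^2_{0,I}(\lambda)$ is the clr image of $\exp(h)$, which is proportional to the $\lambda$-density of $\mu_I\otimes\lambda_{D\setminus I}$ with $\mu_I$ having $\lambda_I$-density $\exp(h)\in\mathcal{B}^2(\lambda_I)$; thus $\exp(h)\in\mathcal{B}^2_I(\lambda)$. Along the way the identity $\int_\Omega k^2\,d\lambda=\lambda_{D\setminus I}(\Omega_{D\setminus I})\int_{\Omega_I}k^2\,d\lambda_I$, valid for $k$ depending on $\omega_I$ only, shows that the natural embedding of $L^2_0(\lambda_I)$ into $L^2_{0,I}(\lambda)$ multiplies the norm by the constant $\sqrt{\lambda_{D\setminus I}(\Omega_{D\setminus I})}$, so completeness of $L^2_0(\lambda_I)$ transfers to $L^2_{0,I}(\lambda)$ and hence, via the isometry clr, to $\mathcal{B}^2_I(\lambda)$. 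By the Hilbert space projection theorem the orthogonal projection onto $\mathcal{B}^2_I(\lambda)$ therefore exists and is unique.

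Next I would identify this projection explicitly. For $h\in L^2_0(\lambda)$ put $P_I h = \lambda_{D\setminus I}(\Omega_{D\setminus I})^{-1}\int_{\Omega_{D\setminus I}}h\,d\lambda_{D\setminus I}$. One checks that $P_I h$ depends on $\omega_I$ only, that it integrates to $0$ against $\lambda$ by Fubini, and that it is square-integrable by Jensen's inequality, so $P_I h\in L^2_{0,I}(\lambda)$; and that $h-P_Ih$ is orthogonal to every $k\in L^2_{0,I}(\lambda)$, since by Fubini $\int_\Omega (h-P_Ih)\,k\,d\lambda = \int_{\Omega_I} k\bigl[\int_{\Omega_{D\setminus I}}(h-P_Ih)\,d\lambda_{D\setminus I}\bigr]\,d\lambda_I$ and the bracket vanishes by the very definition of $P_I$. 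Hence $P_I$ is the orthogonal projection of $L^2_0(\lambda)$ onto $L^2_{0,I}(\lambda)$, which already yields the second displayed identity of the proposition upon taking $h=\clr(f_\mu)$.

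It then remains to verify that $\clr(f_{\mu,I})=P_I\,\clr(f_\mu)$, for then $f_{\mu,I}$ is the clr-preimage of the projection of $\clr(f_\mu)$ and therefore, by the isometry, the orthogonal projection of $f_\mu$ onto $\mathcal{B}^2_I(\lambda)$. Since $\ln f_{\mu,I}=\lambda_{D\setminus I}(\Omega_{D\setminus I})^{-1}\int_{\Omega_{D\setminus I}}\ln f_\mu\,d\lambda_{D\setminus I}$ depends on $\omega_I$ only and has square-integrable modulus (Jensen again, which also gives $f_{\mu,I}\in\mathcal{B}^2_I(\lambda)$), subtracting its $\lambda$-mean and interchanging the order of integration by Fubini, exactly as in the proof of Lemma~\ref{lem:3.4}, rewrites $\clr(f_{\mu,I})$ as $\lambda_{D\setminus I}(\Omega_{D\setminus I})^{-1}\int_{\Omega_{D\setminus I}}[\,\ln f_\mu - \lambda(\Omega)^{-1}\int_\Omega\ln f_\mu\,d\lambda\,]\,d\lambda_{D\setminus I}=P_I\,\clr(f_\mu)$, as claimed. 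The only thing to watch throughout is the bookkeeping of the normalizing constants $\lambda_i(\Omega_i)$ — in particular the factor $\lambda_{D\setminus I}(\Omega_{D\setminus I})$, whose omission would break both the membership $\clr(f_{\mu,I})\in L^2_{0,I}(\lambda)$ and the orthogonality of $h-P_Ih$ — but once these are in place every step is a routine Fubini or Jensen argument. One could alternatively bypass clr and argue directly in $\mathcal{B}^2(\lambda)$, imitating Lemma~\ref{lem:3.2} and Proposition~\ref{prop:3.1} by means of a $|D|$-fold analog of Lemma~\ref{lem:3.1}, but the clr route is shorter.
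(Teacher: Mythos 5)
Your proposal is correct and follows essentially the same route as the paper: completeness via the norm identity $\| f\|^2_{\mathcal{B}^2 (\lambda)} = \lambda_{D \setminus I}(\Omega_{D \setminus I}) \| f\|^2_{\mathcal{B}^2(\lambda_I)}$ inherited through the clr isometry, the Fubini computation identifying $\clr (f_{\mu, I})$ as the normalized partial integral of $\clr (f_\mu)$, and the orthogonality of $\clr(f_\mu) - \clr(f_{\mu,I})$ to functions of the $I$-coordinates by another application of Fubini. Your version merely packages the argument slightly more explicitly (defining the averaging operator on $L^2_0(\lambda)$ first and pulling back), which is a presentational rather than a mathematical difference.
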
 

\begin{proof}
To show that $\mathcal{B}_I^2 (\lambda)$ is complete, note that any $f_\mu \in \mathcal{B}_I^2 (\lambda)$ is a function of variables with indices in $I$ only, and so is its clr transformation. Hence, because the latter is an isomorphism, one has
\[
\| f_{\mu} \|^2_{\mathcal{B}^2 (\lambda)} = \| \clr (f_{\mu}) \|^2_{L^2 (\lambda)} = \lambda_{D \setminus I}(\Omega_{D \setminus I}) \| \clr (f_{\mu}) \|^2_{L^2(\lambda_I)} = \lambda_{D \setminus I}(\Omega_{D \setminus I}) \| f_{\mu} \|^2_{\mathcal{B}^2(\lambda_I)}. 
\]
The completeness of $\mathcal{B}_I^2 (\lambda)$ then follows from the completeness of $\mathcal{B}^2(\lambda_I)$. 

Proceeding along the same lines as in the proof of Proposition~\ref{prop:3.1}, choose an arbitrary $f_\mu \in \mathcal{B}^2 (\lambda)$. To prove that $f_{\mu, I}$  is the orthogonal projection of $f_\mu$ onto $\mathcal{B}_I^2 (\lambda)$, it suffices to check that, for any $g \in \mathcal{B}_I^2 (\lambda)$, one has
\[
\langle f_\mu \ominus f_{\mu, I}, g \rangle_{\mathcal{B}^2 (\lambda)} = \langle \clr (f_\mu) - \clr (f_{\mu, I}), \clr (g) \rangle_{L^2 (\lambda)} = 0.
\]
To verify this, one must first check that $\clr (f_{\mu, I})$ is as asserted. Indeed, it follows from Fubini's theorem that
\begin{multline*}
\int_\Omega \left\{ \frac{1}{\lambda_{D \setminus I}(\Omega_{D \setminus I})} \int_{\Omega_{D \setminus I}} \ln (f_\mu) d \lambda_{D \setminus I} \right\} d \lambda \\
= \int_{\Omega_I} \int_{\Omega_{D \setminus I}} \left\{ \frac{1}{\lambda_{D \setminus I}(\Omega_{D \setminus I})} \int_{\Omega_{D \setminus I}} \ln (f_\mu) d \lambda_{D \setminus I} \right\} d \lambda_{D \setminus I} \, d \lambda_I,
\end{multline*}
so that
\begin{multline*}
\clr (f_{\mu, I}) = \frac{1}{\lambda_{D \setminus I}(\Omega_{D \setminus I})} \int_{\Omega_{D \setminus I}} \ln (f_\mu) d \lambda_{D \setminus I} \\
- \frac{1}{\lambda(\Omega)} \int_\Omega \left\{ \frac{1}{\lambda_{D \setminus I}(\Omega_{D \setminus I})} \int_{\Omega_{D \setminus I}} \ln (f_\mu) d \lambda_{D \setminus I} \right\} d \lambda
\end{multline*}
can be rewritten as
\begin{align*}
\clr (f_{\mu, I}) & = \frac{1}{\lambda_{D \setminus I}(\Omega_{D \setminus I})} \int_{\Omega_{D \setminus I}} \ln (f_\mu) d \lambda_{D \setminus I}  - \frac{1}{\lambda(\Omega)} \int_\Omega  \ln (f_\mu) d \lambda\\ 
& = \frac{1}{\lambda_{D \setminus I}(\Omega_{D \setminus I})} \int_{\Omega_{D \setminus I}} \clr ( f_\mu) \, d \lambda_{D \setminus I} ,
\end{align*}
as claimed. Therefore,
\begin{align*}
\langle \clr (f_\mu) - \clr (f_{\mu, I}), \clr (g) \rangle_{L^2 (\lambda)} & = \int_{\Omega_I} \clr (g)  \int_{\Omega_{D \setminus I}} \{\clr (f_\mu) - \clr (f_{\mu, I})\}  \, d \lambda_{D \setminus I} \, d \lambda_I \\
 & = \int_{\Omega_I} \clr (g)  \lambda_{D \setminus I}(\Omega_{D \setminus I})\{ \clr (f_{\mu, I} - \clr (f_{\mu, I})\} \, d \lambda_I.
\end{align*}
The latter expression is clearly equal to $0$, as was to be shown. 
\end{proof}

Note that when $I = \varnothing$, one can define, analogously to Proposition~\ref{prop:4.1}, 
\begin{equation}
\label{eq:4.2}
f_{\mu, \varnothing} =  \exp \left\{ \frac{1}{\lambda(\Omega)}  \int_{\Omega} \ln (f_\mu) d \lambda \right\} \prop2 1.
\end{equation}
Furthermore, when $d = 2$ and $I = \{i\}$ for some $i \in \{1, 2 \}$, the projection $f_{\mu, I} = f_{\mu, \{ i \}}$ defined in Proposition~\ref{prop:4.1} is precisely the geometric margin $f_{\mu, i}$ defined in Section~\ref{sec:3}. This remark motivates the introduction of the following definition.

\begin{definition}
\label{def:4.1}
For arbitrary non-empty set $I \subsetneq D$, $f_{\mu, I}$ is called the $I$-th geometric margin of $f_\mu$. When $I = \{ i \}$ for some integer $i \in D$, one writes $f_{\mu, i}$ instead of $f_{\mu, \{i\}}$.  Furthermore, by convention, $f_{\mu, D}=f_\mu$ and $f_{\mu,\varnothing}$ is given by \eqref{eq:4.2}.
\end{definition}

\subsection{Decomposition of the multivariate density\label{sec:4.3}}

In order to call on Theorem~2.1 of \cite{Kuo/etal:2010}, introduce, for any $I \subseteq D$, the operator $P_{D \setminus I} : \mathcal{B}^2 (\lambda) \to \mathcal{B}^2_{ I}(\lambda)$ defined, for any $f \in \mathcal{B}^2 (\lambda)$, by
\begin{equation}
\label{eq:4.3}
P_{D \setminus I}(f_\mu) = f_{\mu, I}
\end{equation}
with $f_{\mu, I}$ as in Definition~\ref{def:4.1}. In particular, $P_\varnothing = {\rm id}$ is the identity map $f_\mu \mapsto f_\mu$. When $I = D \setminus \{ i \}$ for some $i \in D$, one has $D \setminus (D \setminus I) = \{i\}$ and simply writes $P_i \equiv P_{\{i\}}$.

With these conventions, which are chosen to match the notation of \cite{Kuo/etal:2010}, the form of the $I$-th geometric margin $f_{\mu, I}$ as given in Definition~\ref{def:4.1} implies that, for any $I \subseteq D$, 
\[
P_I = \prod_{i \in I} P_i, 
\]
where the product notation refers to convolution of mappings and an empty product refers to $P_\varnothing = {\rm id}$.  Furthermore, the following properties of $P_i$ hold. 

\begin{lemma}
\label{lem:4.1}
The operators $P_1, \ldots, P_d$ are commuting projections on $\mathcal{B}^2 (\lambda)$ such that, for all $f_\mu \in \mathcal{B}^2 (\lambda)$, $P_i(f_\mu) \in  \mathcal{B}^2_{D \setminus \{i\}}(\lambda)$ and $P_i (f_\mu) = f_\mu$ whenever $f_\mu \in \mathcal{B}^2_{D \setminus \{i\}}(\lambda)$.
\end{lemma}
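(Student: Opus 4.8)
The plan is to read off all three assertions directly from Proposition~\ref{prop:4.1}. Throughout I assume $d \geq 2$, the only case of interest, so that for each $i \in D$ the set $D \setminus \{i\}$ is a non-empty proper subset of $D$. Proposition~\ref{prop:4.1} then applies with $I = D \setminus \{i\}$ and shows that $P_i(f_\mu) = f_{\mu, D \setminus \{i\}}$ is the unique orthogonal projection of $f_\mu$ onto the complete subspace $\mathcal{B}^2_{D \setminus \{i\}}(\lambda)$, together with the explicit formulas
\[
P_i(f_\mu) = \exp \left\{ \frac{1}{\lambda_i(\Omega_i)} \int_{\Omega_i} \ln (f_\mu) \, d\lambda_i \right\}, \qquad \clr \{ P_i(f_\mu) \} = \frac{1}{\lambda_i(\Omega_i)} \int_{\Omega_i} \clr (f_\mu) \, d\lambda_i .
\]
Being the orthogonal projection onto $\mathcal{B}^2_{D \setminus \{i\}}(\lambda)$, the operator $P_i$ is in particular a linear idempotent projection whose range is contained in $\mathcal{B}^2_{D \setminus \{i\}}(\lambda)$, which already yields the first two properties in the statement. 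If a self-contained verification is wanted, the inclusion $P_i(f_\mu) \in \mathcal{B}^2_{D \setminus \{i\}}(\lambda)$ is immediate from the form of $f_{\mu, D \setminus \{i\}}$, and linearity follows from the clr formula above combined with the fact that clr is an isomorphism.

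For the fixed-point property, suppose $f_\mu \in \mathcal{B}^2_{D \setminus \{i\}}(\lambda)$, so a representative of $f_\mu$ does not depend on $\omega_i$. Then $\int_{\Omega_i} \ln (f_\mu) \, d\lambda_i = \lambda_i(\Omega_i) \ln (f_\mu)$, whence $P_i(f_\mu) = \exp\{ \ln (f_\mu)\} = f_\mu$ in $\mathcal{B}^2(\lambda)$; this is also just the general fact that an orthogonal projection restricts to the identity on its range. Together with the inclusion $P_i(\mathcal{B}^2(\lambda)) \subseteq \mathcal{B}^2_{D \setminus \{i\}}(\lambda)$, this gives $P_i \circ P_i = P_i$, confirming that each $P_i$ is a projection.

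It remains to show that the $P_i$ commute, the only step requiring a computation. For $i = j$ there is nothing to prove, so fix distinct $i, j \in D$ and an arbitrary $f_\mu \in \mathcal{B}^2 (\lambda)$. Applying the explicit formula of Proposition~\ref{prop:4.1} twice gives
\[
P_i \{ P_j (f_\mu) \} = \exp \left\{ \frac{1}{\lambda_i(\Omega_i) \, \lambda_j(\Omega_j)} \int_{\Omega_i} \int_{\Omega_j} \ln (f_\mu) \, d\lambda_j \, d\lambda_i \right\},
\]
and likewise for $P_j \{ P_i (f_\mu) \}$ with $i$ and $j$ interchanged. Since $\lambda$ is finite and $\ln (f_\mu) \in L^2(\lambda) \subseteq L^1(\lambda)$ by the defining property of $\mathcal{B}^2(\lambda)$, Fubini's theorem allows the integrations over $\Omega_i$ and $\Omega_j$ to be swapped, so the right-hand side is symmetric in $i$ and $j$ and therefore $P_i \{ P_j (f_\mu) \} = P_j \{ P_i (f_\mu) \}$.

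I do not anticipate any genuine obstacle: the argument is bookkeeping built on Proposition~\ref{prop:4.1}. The only points needing a little care are measure-theoretic, namely checking that $D \setminus \{i\}$ is a non-empty proper subset of $D$ so that Proposition~\ref{prop:4.1} is applicable (forcing $d \geq 2$), and justifying the use of Fubini's theorem through the $L^1$-integrability of $\ln (f_\mu)$, which is secured by the finiteness of $\lambda$.
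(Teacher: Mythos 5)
Your proposal is correct and follows essentially the same route as the paper: both read off linearity, the range inclusion, the fixed-point property, and idempotency directly from Proposition~\ref{prop:4.1}, and both get commutativity from the fact that composing $P_i$ and $P_j$ amounts to averaging $\ln(f_\mu)$ over $\Omega_i$ and $\Omega_j$ in either order. You merely make explicit (via Fubini) the identity $P_i \circ P_j = P_{\{i,j\}} = P_j \circ P_i$ that the paper declares apparent from the form of the geometric margins.
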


\begin{proof}
To show that the operators are commuting projections, one must check that, for all $i, j \in D$, $P_i \circ P_j = P_j \circ P_i$ and $P_i$ is  linear and idempotent, the latter meaning that $P_i \circ P_i = P_i$. The linearity of $P_i$ is apparent from Proposition~\ref{prop:4.1}, and so is the fact that $P_i \circ P_j = P_{\{i,j\}} = P_j \circ P_i$. Given that $P_i$ is a projection onto $\mathcal{B}^2_{D \setminus \{i\}}(\lambda)$, it is also immediate from Proposition~\ref{prop:4.1} that, for all $f_\mu \in \mathcal{B}^2 (\lambda)$, one has $P_i(f_\mu) \in  \mathcal{B}^2_{D \setminus \{i\}}(\lambda)$ and $P_i (f_\mu) = f_\mu$ whenever $f_\mu \in \mathcal{B}^2_{D \setminus \{i\}}(\lambda)$. That $P_i$ is idempotent is a trivial consequence of this observation. This concludes the argument.
\end{proof}

Lemma~\ref{lem:4.1} implies that the assumptions of Theorem~2.1 of \cite{Kuo/etal:2010} hold. The result below then follows directly from Eqs.~(2.6) and (2.7) in that paper. 

\begin{theorem}
\label{thm:4.1}
Any $f_\mu \in \mathcal{B}^2 (\lambda)$ can be decomposed in the form~\eqref{eq:4.1}, where for any $I \subseteq D$, $g_{\mu, I}$ is given by
\[
g_{\mu, I} = \Bigl ( \prod_{i \in I} ({\rm id} - P_i)\Bigr) P_{D \setminus I} (f _\mu).
\]
\end{theorem}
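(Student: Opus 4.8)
The plan is to verify that the hypotheses of Theorem~2.1 of \cite{Kuo/etal:2010} are met in the Bayes space setting and then simply quote that result. By Lemma~\ref{lem:4.1}, the operators $P_1, \ldots, P_d$ are commuting projections on the Hilbert space $\mathcal{B}^2 (\lambda)$, which is exactly the algebraic input required by the Hoeffding--Sobol formalism of \cite{Kuo/etal:2010}: their decomposition is derived purely from the identity $\mathrm{id} = \prod_{i \in D} \{(\mathrm{id} - P_i) + P_i\}$ expanded multilinearly, which holds in any commutative algebra of idempotents. Expanding this product over subsets $I \subseteq D$ gives $\mathrm{id} = \sum_{I \subseteq D} \bigl(\prod_{i \in I}(\mathrm{id} - P_i)\bigr)\bigl(\prod_{i \notin I} P_i\bigr)$, and since $\prod_{i \notin I} P_i = P_{D \setminus I}$ by the product formula established just before Lemma~\ref{lem:4.1}, applying both sides to $f_\mu$ yields $f_\mu = \bigoplus_{I \subseteq D} g_{\mu, I}$ with $g_{\mu, I} = \bigl(\prod_{i \in I}(\mathrm{id} - P_i)\bigr) P_{D \setminus I}(f_\mu)$, which is precisely \eqref{eq:4.1} together with the stated formula for $g_{\mu, I}$.

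The remaining points to check are the two structural claims implicit in the statement: first, that each $g_{\mu, I}$ genuinely depends only on the coordinates in $I$, i.e.\ lies in $\mathcal{B}^2_I(\lambda)$; and second, that the sum is a \emph{direct} sum of mutually orthogonal terms. For the first, I would argue that $P_{D \setminus I}(f_\mu) = f_{\mu, I} \in \mathcal{B}^2_I(\lambda)$ by Proposition~\ref{prop:4.1}, and that for any $i \in I$ the operator $\mathrm{id} - P_i$ maps $\mathcal{B}^2_J(\lambda)$ into $\mathcal{B}^2_J(\lambda)$ whenever $i \in J$ (since $P_i$ does, being a projection onto $\mathcal{B}^2_{D \setminus \{i\}}(\lambda)$ that fixes functions not depending on $\omega_i$ and more generally commutes with integration over the other coordinates); iterating over $i \in I$ keeps us inside $\mathcal{B}^2_I(\lambda)$. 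For orthogonality, I would invoke the Hoeffding--Sobol annihilation property: if $I \neq I'$, pick $i \in I \triangle I'$, say $i \in I \setminus I'$; then $P_i g_{\mu, I} = 0$ because $g_{\mu, I}$ contains the factor $\mathrm{id} - P_i$ and the $P_j$ commute, whereas $g_{\mu, I'} \in \mathcal{B}^2_{I'}(\lambda) \subseteq \mathcal{B}^2_{D \setminus \{i\}}(\lambda)$ so $P_i g_{\mu, I'} = g_{\mu, I'}$; since $P_i$ is the orthogonal projection onto $\mathcal{B}^2_{D \setminus \{i\}}(\lambda)$ (Proposition~\ref{prop:4.1}), self-adjointness gives $\langle g_{\mu, I}, g_{\mu, I'} \rangle = \langle g_{\mu, I}, P_i g_{\mu, I'} \rangle = \langle P_i g_{\mu, I}, g_{\mu, I'} \rangle = 0$. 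This is essentially Eqs.~(2.6) and (2.7) of \cite{Kuo/etal:2010}, transcribed into $\mathcal{B}^2(\lambda)$.

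The only genuine obstacle is bookkeeping: one must be careful that the $P_i$ are self-adjoint \emph{orthogonal} projections (not merely algebraic idempotents) so that the annihilation argument for orthogonality goes through, and this is exactly what Proposition~\ref{prop:4.1} supplies, since it identifies $P_i(f_\mu) = f_{\mu, D \setminus \{i\}}$ as the \emph{orthogonal} projection of $f_\mu$ onto $\mathcal{B}^2_{D \setminus \{i\}}(\lambda)$. Once commutativity (Lemma~\ref{lem:4.1}), idempotency (Lemma~\ref{lem:4.1}), and orthogonality of the projections (Proposition~\ref{prop:4.1}) are in hand, the decomposition, the membership $g_{\mu, I} \in \mathcal{B}^2_I(\lambda)$, and the pairwise orthogonality all follow by the standard inclusion--exclusion computation in \cite{Kuo/etal:2010}, and there is nothing further to prove. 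Accordingly I would keep the proof short: cite Lemma~\ref{lem:4.1} to confirm the hypotheses, cite Eqs.~(2.6)--(2.7) of \cite{Kuo/etal:2010} for the formula, and remark that mutual orthogonality and the coordinate-dependence of $g_{\mu, I}$ are the content of that theorem once the $P_i$ are known to be orthogonal projections.
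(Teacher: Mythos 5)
Your proposal is correct and follows essentially the same route as the paper: the paper's entire argument is to note that Lemma~\ref{lem:4.1} verifies the hypotheses of Theorem~2.1 of \cite{Kuo/etal:2010} and then quote Eqs.~(2.6)--(2.7) of that paper. The extra details you supply (the multilinear expansion of $\mathrm{id}=\prod_{i\in D}\{(\mathrm{id}-P_i)+P_i\}$, the membership $g_{\mu,I}\in\mathcal{B}^2_I(\lambda)$, and the orthogonality via self-adjointness of the $P_i$) are sound but go beyond what the paper proves at this point, the mutual orthogonality being deferred to Section~\ref{sec:4.4}.
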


The next result explains how the decomposition in Theorem~\ref{thm:4.1} relates to the one of \cite{Hron/etal:2020} in Eq.~\eqref{eq:3.4} in the bivariate case. 

\begin{proposition}
\label{prop:4.2}
For any $f_\mu \in \mathcal{B}^2 (\lambda)$, 
\[
f_\mu  =  f_{\mu, {\rm ind}} \oplus \bigoplus_{I \subseteq D, | I | \ge 2} f_{\mu, I, {\rm int}} 
\]
where the so-called independence and interaction parts are respectively given by
\[
 f_{\mu, {\rm ind}} = \bigoplus_{i=1}^d f_{\mu, i}, \quad f_{\mu, I, {\rm int}}  =  \bigoplus_{J \subseteq I, J \neq \varnothing}  \{ (-1)^{|I \setminus J|}\} \odot f_{\mu, J}.
\]
\end{proposition}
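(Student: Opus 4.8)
The plan is to take the decomposition $f_\mu = \bigoplus_{I \subseteq D} g_{\mu, I}$ furnished by Theorem~\ref{thm:4.1} and evaluate each $g_{\mu, I}$ in closed form by expanding the operator $\prod_{i \in I}(\mathrm{id} - P_i)$ and collecting terms. Everything can be done directly in $\mathcal{B}^2 (\lambda)$, with $\oplus$ and $\odot$ in the roles of sum and scalar multiple, or equivalently after applying the clr isomorphism and working with ordinary sums in $L_0^2 (\lambda)$; the combinatorial content is the same. First, since $P_1, \dots, P_d$ are commuting linear idempotents by Lemma~\ref{lem:4.1}, expanding the product and applying it to $f_\mu$ gives
\[
g_{\mu, I} = \Bigl(\prod_{i \in I}(\mathrm{id} - P_i)\Bigr) P_{D \setminus I}(f_\mu) = \bigoplus_{K \subseteq I} \{(-1)^{|K|}\} \odot \bigl(P_K \circ P_{D \setminus I}\bigr)(f_\mu),
\]
where $P_K = \prod_{i \in K} P_i$ and $P_\varnothing = \mathrm{id}$.

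Next I would simplify the composition inside each summand. Because $K \subseteq I$ is disjoint from $D \setminus I$ and the $P_i$ are commuting idempotents, $P_K \circ P_{D \setminus I} = P_{K \cup (D \setminus I)}$; applying this to $f_\mu$ and using the defining relation $P_{D \setminus M}(f_\mu) = f_{\mu, M}$ of the higher-order geometric margins (Eq.~\eqref{eq:4.3}) yields
\[
\bigl(P_K \circ P_{D \setminus I}\bigr)(f_\mu) = P_{K \cup (D \setminus I)}(f_\mu) = f_{\mu,\, D \setminus (K \cup (D \setminus I))} = f_{\mu,\, I \setminus K},
\]
the last equality because $K \subseteq I$. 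Substituting this back into the expansion of $g_{\mu, I}$ and reindexing the sum by $J = I \setminus K$, which permutes the subsets of $I$ and satisfies $|K| = |I \setminus J|$, gives
\[
g_{\mu, I} = \bigoplus_{J \subseteq I} \{(-1)^{|I \setminus J|}\} \odot f_{\mu, J}.
\]

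It then remains to dispose of the degenerate index sets. The $J = \varnothing$ summand equals $f_{\mu, \varnothing} \prop2 1$ by Eq.~\eqref{eq:4.2}, hence is the neutral element of $(\mathcal{B}^2 (\lambda), \oplus)$ and can be omitted; this identifies $g_{\mu, I}$ with $f_{\mu, I, {\rm int}}$ for every $I$ with $|I| \ge 2$, while for a singleton $I = \{i\}$ it gives $g_{\mu, \{i\}} = f_{\mu, i}$, and $g_{\mu, \varnothing} \prop2 1$. Plugging these identifications into $f_\mu = \bigoplus_{I \subseteq D} g_{\mu, I}$ and using once more that $1$ is neutral for $\oplus$ produces
\[
f_\mu = \Bigl(\bigoplus_{i=1}^d f_{\mu, i}\Bigr) \oplus \bigoplus_{I \subseteq D,\, |I| \ge 2} f_{\mu, I, {\rm int}} = f_{\mu, {\rm ind}} \oplus \bigoplus_{I \subseteq D,\, |I| \ge 2} f_{\mu, I, {\rm int}},
\]
which is the claimed decomposition.

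I do not expect any genuine obstacle here: the whole proof is formal manipulation of the commuting projections $P_1, \dots, P_d$. The points that need a little care are the binomial-type expansion of $\prod_{i \in I}(\mathrm{id} - P_i)$, the composition identity $P_K \circ P_{D \setminus I} = P_{K \cup (D \setminus I)}$ (which rests on the commutativity and idempotency from Lemma~\ref{lem:4.1}), checking that $K \mapsto I \setminus K$ is a bijection of the subsets of $I$ onto themselves, and treating the empty and singleton index sets separately so that the $f_{\mu, \varnothing} \prop2 1$ contributions vanish cleanly and the singleton terms reassemble into $f_{\mu, {\rm ind}}$.
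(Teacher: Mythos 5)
Your proof is correct and follows essentially the same route as the paper: both start from the decomposition of Theorem~\ref{thm:4.1} and reduce each $g_{\mu,I}$ to the alternating sum $\bigoplus_{J \subseteq I} \{(-1)^{|I\setminus J|}\}\odot f_{\mu,J}$ before reassembling the terms. The only differences are cosmetic: you derive that inclusion--exclusion identity yourself by expanding $\prod_{i\in I}(\mathrm{id}-P_i)$ over the commuting idempotents, where the paper cites Theorem~2.1~(b) of \cite{Kuo/etal:2010}, and you discard the $J=\varnothing$ contributions at once because $f_{\mu,\varnothing}\prop2 1$ is the neutral element, where the paper tracks the constant $\kappa$ explicitly and checks that its exponents sum to $(1-1)^{|D|}=0$.
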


\begin{proof} To prove the claim, one must relate the functions $g_{\mu, I}$ in Theorem~\ref{thm:4.1} to the elements of the decomposition in Proposition~\ref{prop:4.2}. To this end, set $\kappa = P_D (f_\mu) = f_{\mu, \varnothing}$ and keep in mind that $\kappa \in \mathbb{R}$ is a constant. From the definition of $P$ in Eq.~\eqref{eq:4.3} and  Theorem~2.1 (b) in  \cite{Kuo/etal:2010}, one also has, for any $I \subseteq D$,
\[
g_{\mu, I} = \bigoplus_{J \subseteq I} \{ (-1)^{|I \setminus J|}\} \odot P_{D \setminus J}(f_\mu) = [\{(-1)^{|I|}\}\odot \kappa] \oplus  \bigoplus_{J \subseteq I, J \neq \varnothing}  \{ (-1)^{|I \setminus J|}\} \odot f_{\mu, J}.
\]
In particular, $g_{\mu,\varnothing}=\kappa$ and if $I = \{ i \}$ for some integer $i \in D$, then $g_{\mu, I} =f_{\mu, i}/\kappa$. The decomposition in Theorem~\ref{thm:4.1} can then be rewritten as 
\[
f_\mu = \bigoplus_{I \subseteq D} [\{(-1)^{|I|}\}\odot \kappa] \bigoplus_{I \subseteq D, I \neq \varnothing} \bigoplus_{J \subseteq I, J \neq \varnothing}  \{ (-1)^{|I \setminus J|}\} \odot f_{\mu, J}.
\]
Given that 
\[
\bigoplus_{I \subseteq D} [\{(-1)^{|I|}\}\odot \kappa] = \kappa^{\sum_{I \subseteq D} (-1)^{|I|}} = \kappa^{(1-1)^{|D|}} = 1,
\]
one has 
\[
f_\mu = \bigoplus_{I \subseteq D, I \neq \varnothing} \bigoplus_{J \subseteq I, J \neq \varnothing}  \{ (-1)^{|I \setminus J|}\} \odot f_{\mu, J}
=\bigoplus_{i=1}^d f_{\mu, i} \oplus \bigoplus_{I \subseteq D, |I |\ge 2} \bigoplus_{J \subseteq I, J \neq \varnothing}  \{ (-1)^{|I \setminus J|}\} \odot f_{\mu, J}
\]
which is easily seen to have the desired form upon defining the independence and interaction parts as in Proposition~\ref{prop:4.2}.
\end{proof}

When $d = 2$, Proposition~\ref{prop:4.2} boils down to the decomposition~\eqref{eq:3.4}, with $f_{\mu, D, {\rm int}}$ being the same as $f_{\mu, {\rm int}}$ in Eq~\eqref{eq:3.2}. In general when $d \ge 2$ there are $2^d - (d+1)$ interaction parts indexed by sets $I \subseteq D$ with $|I| \ge 2$ and termed $I$-th interactions. 

Interestingly, each $I$-th interaction with $|I| \ge 2$ can be computed recursively from the univariate geometric margins and interactions pertaining to subsets $J \subsetneq I$. As seen below, this is a consequence of Theorem~4.2 (a) in \cite{Kuo/etal:2010}. 

\begin{proposition}
\label{prop:4.3}
For any $f_\mu \in \mathcal{B}^2 (\lambda)$ and $I \subseteq D$ with $|I| \ge 2$, the $I$-th interaction part in Proposition~\ref{prop:4.2} satisfies
\[
f_{\mu, I, {\rm int}} = f_{\mu, I} \ominus \Bigl[ \bigoplus_{J \subsetneq I, |J| \ge 2} f_{\mu, J, {\rm int}} \bigoplus_{i\in I} f_{\mu,i} \Bigr].
\]
\end{proposition}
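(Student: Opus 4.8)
The plan is to reconcile the two expressions for $f_{\mu,I,{\rm int}}$: the closed form given in Proposition~\ref{prop:4.2}, namely $f_{\mu,I,{\rm int}} = \bigoplus_{J \subseteq I, J \neq \varnothing} \{(-1)^{|I \setminus J|}\} \odot f_{\mu,J}$, and the recursive form claimed here. Since all manipulations take place in the Hilbert space $\mathcal{B}^2(\lambda)$ and the clr transformation is a linear isomorphism, I would first pass to the clr space, where $\oplus$ becomes ordinary addition and $\odot$ becomes scalar multiplication; this turns both sides into finite linear combinations of the functions $\clr(f_{\mu,J})$ for $\varnothing \neq J \subseteq I$, and the identity becomes a purely combinatorial statement about signed sums over subsets.

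Concretely, I would start from the right-hand side, expand each $f_{\mu,J,{\rm int}}$ for $J \subsetneq I$ with $|J| \ge 2$ using its closed form from Proposition~\ref{prop:4.2}, and also write $f_{\mu,i} = f_{\mu,\{i\}}$. Collecting the coefficient of a fixed $\clr(f_{\mu,K})$ (for $\varnothing \neq K \subseteq I$) on the right-hand side, one gets $\clr(f_{\mu,I})$ contributing $+1$ when $K = I$, the singleton terms contributing $+1$ when $|K| = 1$, and the subtracted interaction sum contributing $-\sum_{J: K \subseteq J \subsetneq I, |J| \ge 2} (-1)^{|J \setminus K|}$. One must check this total equals $(-1)^{|I \setminus K|}$, which is the coefficient on the left-hand side. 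This reduces to the standard binomial identity $\sum_{\ell=0}^{m} (-1)^\ell \binom{m}{\ell} = 0$ for $m \ge 1$ applied to the set $I \setminus K$, with careful bookkeeping of the boundary cases $|K| = 1$, $K = I$, and $|K| = |I| - 1$ (where no intermediate $J$ with $|J| \ge 2$ strictly between $K$ and $I$ may exist, or the constraint $|J|\ge 2$ excises exactly the terms that would otherwise spoil the cancellation).

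Alternatively — and this is the route the paper signals by citing Theorem~4.2(a) of \cite{Kuo/etal:2010} — I would avoid re-deriving the combinatorics by hand and instead invoke that recursion directly. In the notation established before Theorem~\ref{thm:4.1}, the ANOVA-type terms $g_{\mu,I}$ satisfy a recursion of the form $g_{\mu,I} = P_{D\setminus I}(f_\mu) \ominus \bigoplus_{J \subsetneq I} g_{\mu,J}$, i.e. $g_{\mu,I} \oplus \bigoplus_{J \subsetneq I} g_{\mu,J} = f_{\mu,I}$. Since $f_{\mu,I,{\rm int}} = g_{\mu,I} \oplus \kappa^{(-1)^{|I|+1}}$... more precisely, from the proof of Proposition~\ref{prop:4.2}, $g_{\mu,I} = [\{(-1)^{|I|}\}\odot\kappa] \oplus f_{\mu,I,{\rm int}}$ for $|I|\ge 1$, and $g_{\mu,\varnothing} = \kappa$, $g_{\mu,\{i\}} = f_{\mu,i}/\kappa$. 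Substituting these into the Kuo--Sloan--Wasilkowski recursion and tracking the powers of the constant $\kappa$ — which telescope via $\sum_{J \subseteq I}(-1)^{|J|} = 0$ restricted appropriately — yields the stated formula with the singletons $f_{\mu,i}$ peeled off separately (since for $|J|=1$ one has $g_{\mu,J}=f_{\mu,J}/\kappa$, not $f_{\mu,J,{\rm int}}$, and $f_{\mu,\varnothing,{\rm int}}$ is not defined so $J=\varnothing$ is handled by the $\kappa$ bookkeeping).

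The main obstacle I anticipate is neither deep nor conceptual but is the bookkeeping of the constant $\kappa = f_{\mu,\varnothing}$ and of the degenerate subsets: the recursion from \cite{Kuo/etal:2010} naturally ranges over all $J \subsetneq I$ including $J = \varnothing$ and all singletons, whereas the statement to be proved sums $f_{\mu,J,{\rm int}}$ only over $J$ with $|J|\ge 2$ and lists the singleton margins $f_{\mu,i}$ as a separate block. Making the translation airtight requires verifying that the contributions of $\kappa$ across all the omitted/relabelled terms cancel to $1$ (equivalently, that the corresponding clr-terms sum to $0$), which is exactly the identity $(1-1)^{|I|}=0$ used already in the proof of Proposition~\ref{prop:4.2}; I would state that cancellation explicitly and otherwise keep the argument short, citing Theorem~4.2(a) of \cite{Kuo/etal:2010} for the underlying recursion.
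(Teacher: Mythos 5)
Your second route is essentially the paper's own proof: it invokes the recursion $g_{\mu,I}=f_{\mu,I}\ominus\bigoplus_{J\subsetneq I}g_{\mu,J}$ from \cite{Kuo/etal:2010} (Theorem~2.1(a) there), substitutes $g_{\mu,J}=[\{(-1)^{|J|}\}\odot\kappa]\oplus f_{\mu,J,\rm int}$ for $|J|\ge 2$, $g_{\mu,\{i\}}=f_{\mu,i}\ominus\kappa$, $g_{\mu,\varnothing}=\kappa$, and cancels the powers of $\kappa$ via $\sum_{J\subseteq I}(-1)^{|J|}=0$, exactly as you describe. Your first, purely combinatorial verification in clr space is also sound (the boundary cases $|K|=1$ and $K=I$ check out), so the proposal is correct and matches the paper's argument.
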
 

\begin{proof}
Fix an arbitrary $f_\mu \in \mathcal{B}^2 (\lambda)$ and a subset $I \subseteq D$ with $|I| \ge 2$. From the proof of Proposition~\ref{prop:4.2}, it transpires that 
\[
f_{\mu, I, {\rm int} } = g_{\mu, I} \ominus [ \{(-1)^{|I|}\} \odot \kappa ] = g_{\mu, I} \oplus [ \{(-1)^{|I|+1}\} \odot \kappa ] = g_{\mu, I} \oplus \kappa^{(-1)^{|I|+1}},
\]
so that $g_{\mu, I} = \kappa^{(-1)^{|I|}}  \oplus f_{\mu, I, {\rm int} }$. Theorem~2.1~(a) in~\cite{Kuo/etal:2010} then implies
\[
f_{\mu, I, {\rm int} } = \kappa^{(-1)^{|I|+1}} \oplus g_{\mu, I} = \kappa^{(-1)^{|I|+1}} \oplus \Bigl[ f_{\mu, I} \ominus \bigoplus_{J \subsetneq I} g_{\mu, J} \Bigr]. 
\]
Now observe that
\[
\bigoplus_{J \subsetneq I} g_{\mu, J}  = \bigoplus_{J \subseteq I, |J| \ge 2 }\{ f_{\mu, J, {\rm int}} \oplus \kappa^{|J|} \} \bigoplus_{i \in I} \{ f_{\mu, i} \oplus \kappa^{-1}\} \oplus \kappa .
\]
Because $\oplus$ is commutative, the terms involving $\kappa$ can be factored out, leading to 
\[
 \kappa^{\sum_{J \subsetneq I} (-1)^{|J|}} = \kappa^{(1-1)^{|I|} - (-1)^{|I|}}= \kappa^{(-1)^{|I|+1}}  
 \]
by the multinomial formula. Accordingly,
\[
f_{\mu, I, {\rm int} } =\{ \kappa^{(-1)^{|I|+1}} \ominus \kappa^{(-1)^{|I|+1}}\} \oplus f_{\mu, I} \ominus \Bigl[ \bigoplus_{J \subseteq I, |J| \ge 2 } f_{\mu, J, {\rm int}}  \bigoplus_{i \in I} f_{\mu, i} \Bigr] ,
\]
which is precisely the claimed expression, as $\kappa^{(-1)^{|I|+1}} \ominus \kappa^{(-1)^{|I|+1}}=1$.
\end{proof}

\subsection{Orthogonality of the components in the decomposition\label{sec:4.4}}

In the bivariate case, Proposition~\ref{prop:3.2} and Corollary~\ref{cor:3.2} established that the elements of the decomposition of $f_\mu \in \mathcal{B}^2 (\lambda)$ are orthogonal projections on certain orthogonal subspaces of $\mathcal{B}^2 (\lambda)$.  This then implies that the independence and interaction part of the decomposition are orthogonal to each other, which had a number of useful consequences. It will now be seen that an analog exists in the multivariate case. To this end, the following two lemmas are instrumental. 

\begin{lemma}
\label{lem:4.2}
If $I$ and $J$ are two non-empty subsets of $D$, then the corresponding subspaces $\mathcal{B}_I^2 (\lambda)$ and $\mathcal{B}_J^2 (\lambda)$ are orthogonal.
\end{lemma}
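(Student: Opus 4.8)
The plan is to transport the statement into the clr space, where orthogonality reduces to an ordinary $L^2(\lambda)$ computation, and then to exploit the product structure of $\lambda$ via Fubini's theorem. Since the clr transformation is an isomorphism of Hilbert spaces (Section~\ref{sec:2.4}), for arbitrary $f \in \mathcal{B}_I^2(\lambda)$ and $g \in \mathcal{B}_J^2(\lambda)$ one has $\langle f, g\rangle_{\mathcal{B}^2(\lambda)} = \langle \clr(f), \clr(g)\rangle_{L^2(\lambda)}$, so it suffices to show that
\[
\langle \clr(f), \clr(g)\rangle_{L^2(\lambda)} = \int_{\Omega} \clr(f)\,\clr(g)\, d\lambda = 0 .
\]
First I would record the two structural facts that drive the argument. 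Because $f$ depends only on the coordinates indexed by $I$, so does $\clr(f)$; moreover, as the very computation in the proof of Proposition~\ref{prop:4.1} shows, the clr image of $\mathcal{B}_I^2(\lambda)$ is $L^2_0(\lambda_I)$ embedded in $L^2_0(\lambda)$, so that $\clr(f)$ satisfies the zero-mean identity $\int_{\Omega_I} \clr(f)\, d\lambda_I = 0$. The analogous statements hold for $g$ with $J$ in place of $I$.

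Next I would use the product form $\lambda = \bigotimes_{i\in D}\lambda_i$ to integrate block by block, partitioning $D = (I\setminus J) \sqcup (I\cap J) \sqcup (J\setminus I) \sqcup (D\setminus(I\cup J))$. The coordinates in $D\setminus(I\cup J)$ appear in neither factor and contribute only the constant $\lambda_{D\setminus(I\cup J)}(\Omega_{D\setminus(I\cup J)})$. The heart of the matter is then to integrate out a complete index block on which one of the two factors is constant: integrating over the coordinates on which only $f$ varies, $\clr(g)$ is unaffected and factors out of the inner integral, leaving an integral of $\clr(f)$ over the full block $\Omega_I$, which vanishes by the zero-mean identity $\int_{\Omega_I}\clr(f)\,d\lambda_I = 0$. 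This is the exact multivariate counterpart of the cross-term cancellation carried out in Lemma~\ref{lem:3.1} and exploited in the proof of Lemma~\ref{lem:3.2}, where the pairing of a clr transform with the neutral element produces $0$.

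I expect the main obstacle to be the careful bookkeeping of this Fubini reduction over the four-block partition of $D$, and in particular arranging the order of integration so that one factor is integrated over a \emph{complete} index block on which the other factor is constant; this is precisely the step that converts the zero-mean property into the desired cancellation, and it is where the relationship between $I$ and $J$ must be brought to bear. I would therefore carry out the reduction for general non-empty $I$ and $J$ and isolate the vanishing step as a single displayed identity, making transparent that the cancellation rests solely on the zero-mean characterization of the clr images established in Proposition~\ref{prop:4.1}, together with the separation of coordinates furnished by the product structure of $\lambda$.
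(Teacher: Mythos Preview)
Your overall strategy---transport to the clr space via the isometry and then use Fubini together with the zero-mean property of clr images---is exactly the paper's approach.

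There is, however, a genuine gap. The lemma as stated is in fact only true (and only proved in the paper, and only ever invoked in Lemmas~\ref{lem:4.4} and~\ref{lem:4.5}) when $I$ and $J$ are \emph{disjoint}. Your proposal explicitly sets out to handle ``general non-empty $I$ and $J$'' via the four-block partition $D = (I\setminus J)\sqcup(I\cap J)\sqcup(J\setminus I)\sqcup(D\setminus(I\cup J))$, but the key step you describe does not go through in that generality: integrating over the coordinates on which only $f$ varies means integrating over $\Omega_{I\setminus J}$ with the $I\cap J$ coordinates held fixed, which produces
\[
\int_{\Omega_{I\setminus J}} \clr(f)\, d\lambda_{I\setminus J},
\]
not $\int_{\Omega_I}\clr(f)\,d\lambda_I$. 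The former has no reason to vanish; only the latter is zero by the clr zero-mean identity. Indeed, when $I\cap J\neq\varnothing$ the subspaces are not orthogonal at all (for instance, if $I\subseteq J$ then $\mathcal{B}_I^2(\lambda)\subseteq\mathcal{B}_J^2(\lambda)$), so no amount of bookkeeping will rescue the argument.

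Once you add the hypothesis $I\cap J=\varnothing$, your four-block partition collapses to the paper's three-block one, the inner integral really is $\int_{\Omega_I}\clr(f)\,d\lambda_I=0$, and your argument coincides verbatim with the paper's proof.
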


\begin{proof}
Choose arbitrary $f_\mu \in \mathcal{B}^2_{ I }(\lambda)$ and $g_\nu \in \mathcal{B}^2_J(\lambda)$. Because the clr transformation is an isometry, it suffices to prove that $\langle \clr (f_\mu), \clr (g_\nu) \rangle_{L^2 (\lambda)} = 0$. This is follows at once from the fact that $\clr (f_\mu)$ and $\clr (g_\nu)$ are functions of variables in $\Omega_I$ and $\Omega_J$, respectively. Indeed,
\[
\langle \clr (f_\mu), \clr (g_\nu) \rangle_{L^2 (\lambda)} = \int_{\Omega_{D \setminus (I \cup J)}} \int_{\Omega_J} \clr (g_\nu) \Bigl\{ \int_{\Omega_I} \clr (f_\mu) d \lambda_I \Bigr\} d \lambda_J d \lambda_{D\setminus (I\cup J)} =0,
\] 
where the last equality follows from the fact that the term in curly brackets is $0$ by construction of the clr transformation.
\end{proof}

Now consider a non-empty set $I \subseteq D$ and a non-empty subset thereof, say $K \subsetneq I$. Because $\mathcal{B}^2_K(\lambda) \subsetneq \mathcal{B}^2_{ I}(\lambda)$, one can consider the orthogonal complement of $\mathcal{B}^2_K(\lambda)$ in $\mathcal{B}^2_{ I }(\lambda)$, viz.
\begin{equation}
\label{eq:4.4}
\mathcal{B}_{K,I, \perp} = \{ f \in \mathcal{B}^2_{ I }(\lambda) \; | \; \forall_{g \in \mathcal{B}^2_K(\lambda)} \; \langle f, g \rangle_{\mathcal{B}^2 (\lambda)} =0\}.
\end{equation}

\begin{lemma}
\label{lem:4.3}
Consider arbitrary distinct non-empty subsets $I, J \subseteq D$ such that $I \cap J = K \neq \varnothing$. Then $\mathcal{B}^2_{K, I,\perp}$ and $\mathcal{B}^2_{K, J,\perp}$ are orthogonal.
\end{lemma}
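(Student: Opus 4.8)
The plan is to mimic the proof of Lemma~\ref{lem:4.2} but work inside the clr space, where the orthogonal complements $\mathcal{B}^2_{K,I,\perp}$ and $\mathcal{B}^2_{K,J,\perp}$ transform into concrete subspaces of $L^2_0(\lambda)$. Since the clr transformation is an isometric isomorphism, it suffices to characterize the images $\clr(\mathcal{B}^2_{K,I,\perp})$ and $\clr(\mathcal{B}^2_{K,J,\perp})$ and show they are orthogonal in $L^2(\lambda)$. First I would note that $\clr$ maps $\mathcal{B}^2_I(\lambda)$ onto $L^2_{0,I}(\lambda)$, the functions in $L^2_0(\lambda)$ depending only on the coordinates in $I$, and maps $\mathcal{B}^2_K(\lambda)$ onto $L^2_{0,K}(\lambda)$. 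Hence $\clr(\mathcal{B}^2_{K,I,\perp})$ is exactly the orthogonal complement of $L^2_{0,K}(\lambda)$ inside $L^2_{0,I}(\lambda)$; call it $V_{K,I}$. So the task reduces to: if $I\cap J=K\neq\varnothing$ and $I\neq J$, then $V_{K,I}\perp V_{K,J}$ in $L^2(\lambda)$.

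Next I would identify $V_{K,I}$ concretely. A function $f\in L^2_{0,I}(\lambda)$ lies in $V_{K,I}$ iff it is orthogonal to every function depending only on the $K$-coordinates; since $\lambda$ is a product measure, this is equivalent to saying that the conditional expectation (i.e.\ the average over $\Omega_{I\setminus K}$ against $\lambda_{I\setminus K}/\lambda_{I\setminus K}(\Omega_{I\setminus K})$) of $f$ vanishes: $\int_{\Omega_{I\setminus K}} f\, d\lambda_{I\setminus K}=0$ pointwise in the $K$-coordinates. This is the standard Hoeffding--Sobol-type orthogonality and it is precisely the content that makes the ANOVA-like decomposition work; it can be verified by the same Fubini argument used throughout the paper (integrate $f$ against an arbitrary $g\in L^2_{0,K}(\lambda)$ and pull the $g$ outside the $\Omega_{I\setminus K}$-integral).

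With this characterization in hand, the orthogonality is a direct computation. Take $f\in V_{K,I}$ and $h\in V_{K,J}$; both are functions on $\Omega$ depending only on coordinates in $I$ and $J$ respectively, with vanishing average over $\Omega_{I\setminus K}$ and over $\Omega_{J\setminus K}$ respectively. Since $I\cup J$ partitions into $K$, $I\setminus K$, $J\setminus K$ (these are pairwise disjoint), and since $I\neq J$ forces at least one of $I\setminus K$, $J\setminus K$ to be non-empty --- say $I\setminus K\neq\varnothing$ --- I would write $\langle f,h\rangle_{L^2(\lambda)}$ using Fubini, integrating first over $\Omega_{D\setminus(I\cup J)}$ (giving a constant factor), then grouping the remaining integral as an integral over $\Omega_K\times\Omega_{J\setminus K}$ of $h$ times $\bigl(\int_{\Omega_{I\setminus K}} f\, d\lambda_{I\setminus K}\bigr)$; here $h$ does not depend on the $I\setminus K$-coordinates, so it pulls out, and the inner integral is identically $0$ by the defining property of $V_{K,I}$. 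Hence $\langle f,h\rangle_{L^2(\lambda)}=0$, and transferring back through $\clr$ gives $\langle\cdot,\cdot\rangle_{\mathcal{B}^2(\lambda)}=0$.

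The main obstacle is the bookkeeping of the index sets: one must be careful that $I\setminus K$ and $J$ are genuinely disjoint (which holds since $I\cap J=K$, so $(I\setminus K)\cap J=\varnothing$), so that $h$ really is constant in the $\Omega_{I\setminus K}$ directions and can be extracted from the inner integral. The only other subtlety worth flagging is the case analysis $I\neq J$: one of the two ``residual'' index sets is non-empty, and the argument is symmetric in $I$ and $J$, so it is enough to treat $I\setminus K\neq\varnothing$. Everything else is the same Fubini-plus-clr pattern already established in Lemmas~\ref{lem:3.1}, \ref{lem:4.2} and Proposition~\ref{prop:4.1}, so no new machinery is needed.
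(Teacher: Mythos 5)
Your proposal is correct and follows essentially the same route as the paper: pass to the clr space, observe that an element of the relative orthocomplement has vanishing average over $\Omega_{I\setminus K}$, and conclude by Fubini since the other factor does not depend on those coordinates. The only cosmetic difference is that the paper obtains the vanishing-average property by writing each element as $f_\mu\ominus f_{\mu,K}$ and invoking the explicit projection formula of Proposition~\ref{prop:4.1}, whereas you derive it as a characterization of the orthocomplement (where, strictly, orthogonality to $L^2_{0,K}(\lambda)$ only forces the partial average to be constant, and the constant is then zero because the function integrates to zero over all of $\Omega$).
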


\begin{proof}
Without loss of generality, assume that $I \setminus K \neq \varnothing$. First recall that any $f_\mu \in \mathcal{B}^2_{ I }(\lambda)$ is a function of variables in $\Omega_I$ only. From Proposition~\ref{prop:4.1}, its unique orthogonal projection on $\mathcal{B}^2_K$ satisfies 
\[
\ln (f_{\mu, K})= \frac{1}{\lambda_{D \setminus K}(\Omega_{D \setminus K})}  \int_{\Omega_{D \setminus K}} \ln (f_\mu) d \lambda_{D \setminus K} =  \frac{1}{\lambda_{I \setminus K}(\Omega_{I \setminus K})}  \int_{\Omega_{I \setminus K}} \ln (f_\mu) d \lambda_{I \setminus K}.
\]
Similarly,
\begin{equation}\label{eq:4.5}
\clr (f_{\mu, K}) = \frac{1}{\lambda_{I \setminus K}(\Omega_{I \setminus K})} \int_{\Omega_{I \setminus K}} \clr (f_\mu) d \lambda _{I \setminus K}.
\end{equation}

Now take arbitrary elements $f \in \mathcal{B}^2_{K,I, \perp} (\lambda)$ and $g \in \mathcal{B}^2_{K,J,\perp}(\lambda)$. Then there exist $f_\mu \in \mathcal{B}^2_{ I }(\lambda)$ and $g_\nu \in \mathcal{B}^2_J(\lambda)$ so that $f = f_\mu \ominus f_{\mu, K}$ and $g = g_\nu \ominus g_{\nu, K}$. Hence
\[ 
\langle \clr (f), \clr (g) \rangle_{L^2 (\lambda)} = \langle \clr (f_\mu)-\clr (f_{\mu,K}), \clr (g_\nu)-\clr (g_{\nu,K}) \rangle_{L^2 (\lambda)}.
\]
The latter expression can be written as
\[
\int_{\Omega_{D \setminus (I \cup J)}} \int_{\Omega_{J}} \{\clr (g_\nu)-\clr (g_{\nu,K})\} \Bigl[\int_{\Omega_{I \setminus K}} \{\clr (f_\mu)-\clr (f_{\mu,K})\} d \lambda_{I \setminus K}\Bigr] d \lambda_J d \lambda_{D \setminus (I \cup J)}.
\]
In view of Eq.~\eqref{eq:4.5}, the term in square brackets vanishes and hence $\langle f, g\rangle_{\mathcal{B}^2 (\lambda)} = 0$, as claimed.
\end{proof}

To state the main result of this section, the spaces $\mathcal{B}_{I,\rm int}^2 (\lambda)$ must be introduced, and this for each set $I \subseteq D$ with $|I| \ge 2$. This has to be done recursively. The first step is detailed in the following lemma.

\begin{lemma}\label{lem:4.4}
For each set $I = \{i, j\}$ with arbitrary $i \neq j \in D$, let $\mathcal{B}_{I,\rm int}^2 (\lambda)$ be the orthogonal complement of the direct sum $\mathcal{B}^2_{\{i\}}(\lambda) \oplus \mathcal{B}^2_{\{j\}}(\lambda)$ in $\mathcal{B}^2_{I}(\lambda)$, viz.
\[
\mathcal{B}_{I,\rm int}^2 (\lambda) = \{ f_\mu \in \mathcal{B}^2_{ I}(\lambda) \; | \; \forall_{f_{\nu} \in  \mathcal{B}^2_{\{i\}}(\lambda) \oplus \mathcal{B}^2_{\{j\}}(\lambda)} \;  \langle f_\mu , f_\nu \rangle_{\mathcal{B}^2 (\lambda)} = 0 \}.
\]
The subspaces $\mathcal{B}^2_{\{1\}}(\lambda), \ldots, \mathcal{B}^2_{\{d\}}(\lambda)$ and $\mathcal{B}_{I,\rm int}^2 (\lambda)$ with any $I \subseteq D$ with $|I| =2$ are then mutually orthogonal. 
\end{lemma}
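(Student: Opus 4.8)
The plan is to deduce the asserted mutual orthogonality from Lemmas~\ref{lem:4.2} and~\ref{lem:4.3} by a short case analysis, using only the elementary fact that, inside a fixed Hilbert space, the orthogonal complement reverses inclusions. Since ``mutually orthogonal'' means pairwise orthogonal, I would split the pairs drawn from the family $\{\mathcal{B}^2_{\{1\}}(\lambda),\dots,\mathcal{B}^2_{\{d\}}(\lambda)\}\cup\{\mathcal{B}_{I,\rm int}^2(\lambda):|I|=2\}$ into three types: (a) two distinct singleton spaces; (b) a singleton space and an interaction space; (c) two distinct interaction spaces. Before starting I would record that, for $I=\{i,j\}$, the sum $\mathcal{B}^2_{\{i\}}(\lambda)\oplus\mathcal{B}^2_{\{j\}}(\lambda)$ is a well-defined complete subspace of $\mathcal{B}^2_I(\lambda)$: the summands are complete by Proposition~\ref{prop:4.1} and orthogonal by Lemma~\ref{lem:4.2} (the index sets $\{i\},\{j\}$ being disjoint), so $\mathcal{B}_{I,\rm int}^2(\lambda)$ really is the orthogonal complement of this subspace in $\mathcal{B}^2_I(\lambda)$.

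Case (a) is immediate: $\mathcal{B}^2_{\{k\}}(\lambda)\perp\mathcal{B}^2_{\{l\}}(\lambda)$ for $k\neq l$ is Lemma~\ref{lem:4.2} applied to the disjoint sets $\{k\},\{l\}$. For case (b), fix $I=\{i,j\}$ and a singleton $\{k\}$. If $k\notin I$, then $\{k\}$ and $I$ are disjoint, so $\mathcal{B}^2_{\{k\}}(\lambda)\perp\mathcal{B}^2_I(\lambda)$ by Lemma~\ref{lem:4.2}, and since $\mathcal{B}_{I,\rm int}^2(\lambda)\subseteq\mathcal{B}^2_I(\lambda)$ by construction, orthogonality follows a fortiori. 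If $k\in I$, say $k=i$, then $\mathcal{B}^2_{\{i\}}(\lambda)$ is one of the two summands of $\mathcal{B}^2_{\{i\}}(\lambda)\oplus\mathcal{B}^2_{\{j\}}(\lambda)$, and $\mathcal{B}_{I,\rm int}^2(\lambda)$ is by definition orthogonal to that direct sum, hence to $\mathcal{B}^2_{\{i\}}(\lambda)$.

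Case (c) is the one that genuinely uses Lemma~\ref{lem:4.3}. Let $I=\{i,j\}$ and $J=\{k,l\}$ be distinct; since $|I|=|J|=2$, the intersection $I\cap J$ has at most one element. If $I\cap J=\varnothing$, then $\mathcal{B}_{I,\rm int}^2(\lambda)\subseteq\mathcal{B}^2_I(\lambda)$ and $\mathcal{B}_{J,\rm int}^2(\lambda)\subseteq\mathcal{B}^2_J(\lambda)$ are orthogonal by Lemma~\ref{lem:4.2}. If $I\cap J=\{m\}$, set $K=\{m\}$; then $K\subsetneq I$ and $K\subsetneq J$ (as $|K|=1<2$), so the spaces $\mathcal{B}^2_{K,I,\perp}$ and $\mathcal{B}^2_{K,J,\perp}$ of Eq.~\eqref{eq:4.4} are defined and orthogonal by Lemma~\ref{lem:4.3}. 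It then remains to verify the inclusions $\mathcal{B}_{I,\rm int}^2(\lambda)\subseteq\mathcal{B}^2_{K,I,\perp}$ and $\mathcal{B}_{J,\rm int}^2(\lambda)\subseteq\mathcal{B}^2_{K,J,\perp}$, after which orthogonality of the two interaction spaces is immediate. The first inclusion holds because $m\in I$ gives $\mathcal{B}^2_{\{m\}}(\lambda)\subseteq\mathcal{B}^2_{\{i\}}(\lambda)\oplus\mathcal{B}^2_{\{j\}}(\lambda)$ (take the remaining component to be the neutral element), and both orthogonal complements are taken inside $\mathcal{B}^2_I(\lambda)$, so the complement of the larger subspace $\mathcal{B}^2_{\{i\}}(\lambda)\oplus\mathcal{B}^2_{\{j\}}(\lambda)$ --- that is, $\mathcal{B}_{I,\rm int}^2(\lambda)$ --- is contained in the complement of $\mathcal{B}^2_{\{m\}}(\lambda)$ --- that is, $\mathcal{B}^2_{K,I,\perp}$; the second inclusion is identical with $J$ in place of $I$.

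I do not anticipate any real difficulty here: there is no new analytic estimate, and the clr/Fubini manipulations are already absorbed into Lemmas~\ref{lem:4.2} and~\ref{lem:4.3}. The only thing requiring care is bookkeeping --- keeping every orthogonal complement relative to the correct ambient space ($\mathcal{B}^2_I(\lambda)$ versus $\mathcal{B}^2_J(\lambda)$) so that the inclusion-reversing step in case (c) is legitimate, and checking the side conditions ($K\subsetneq I$, and whether $I\cap J$ is empty or a singleton) that decide which of Lemma~\ref{lem:4.2} or Lemma~\ref{lem:4.3} is the one to invoke.
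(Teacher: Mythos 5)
Your argument is correct and follows essentially the same route as the paper's own proof: Lemma~\ref{lem:4.2} for the disjoint-index cases, the definition of the orthogonal complement for a singleton contained in $I$, and the inclusions $\mathcal{B}_{I,\rm int}^2(\lambda)\subseteq\mathcal{B}^2_{K,I,\perp}$ combined with Lemma~\ref{lem:4.3} when two interaction sets share an index. Your extra justification of that inclusion via the inclusion-reversing property of orthogonal complements inside $\mathcal{B}^2_I(\lambda)$ is a welcome bit of explicitness that the paper leaves implicit.
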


\begin{proof}
The fact that the subspaces $\mathcal{B}^2_{\{ 1 \}} (\lambda), \ldots , \mathcal{B}^2_{\{d\}} (\lambda)$ are mutually orthogonal is already clear from Lemma~\ref{lem:4.2}. The same result also implies that $\mathcal{B}_{I,\rm int}^2 (\lambda)$ and $\mathcal{B}_{J,\rm int}^2 (\lambda)$ are orthogonal whenever $I \cap J = \varnothing$ and that $\mathcal{B}_{I,\rm int}^2 (\lambda)$ and $\mathcal{B}_{ \{ i \}}^2 (\lambda)$ are orthogonal whenever $i \not \in I$. It also follows by construction that $\mathcal{B}_{I,\rm int}^2 (\lambda)$ and $\mathcal{B}_{\{ i \}}^2 (\lambda)$ are orthogonal whenever $i \in I$. It thus remains to show that  $\mathcal{B}_{I,\rm int}^2 (\lambda)$ and $\mathcal{B}_{J,\rm int}^2 (\lambda)$ are orthogonal whenever $I \cap J \neq \varnothing$. Given that $|I| = |J|$, one has $I \cap J = K$ with $K = \{k\}$ for some $k \in D$, and hence $\mathcal{B}_{I,\rm int}^2 (\lambda) \subseteq \mathcal{B}_{K,I,\perp}^2 (\lambda)$, where $\mathcal{B}_{K,I,\perp}^2 (\lambda)$ is as in Eq.~\eqref{eq:4.4}. Similarly, one has $\mathcal{B}_{J,\rm int}^2 (\lambda) \subseteq \mathcal{B}_{K,J,\perp}^2 (\lambda)$. The orthogonality of $\mathcal{B}_{I,\rm int}^2 (\lambda)$ and $\mathcal{B}_{J,\rm int}^2 (\lambda)$ then follows at once from Lemma~\ref{lem:4.3}.
\end{proof}

If $d=2$, then there is nothing more to do; $I = D = \{1,2\}$ in Lemma~\ref{lem:4.4} and $\mathcal{B}_{D,\rm int}^2 (\lambda)$ then corresponds to the interaction space $\mathcal{B}^2_{\rm int}(\lambda)$ in Eq.~\eqref{eq:3.3} in Section~\ref{sec:3.5}.

If $d >2$, the construction of the interaction subspaces proceeds by induction, in which Lemma~\ref{lem:4.4} constitutes the base step. The induction hypothesis assumes that $\mathcal{B}_{I,\rm int}^2 (\lambda)$ has been defined for all subsets of cardinality at most $k$, where $2 \le k < d$ and that the subspaces $\mathcal{B}^2_{\{1\}}(\lambda), \ldots, \mathcal{B}^2_{\{d\}}(\lambda)$, and $\mathcal{B}_{I,\rm int}^2 (\lambda) $ with $I \subseteq D$, $2 \le |I| \le k$ are mutually orthogonal. 
For arbitrary $I \subseteq D$ with $|I| = k+1$, $\mathcal{B}_{I,\rm int}^2 (\lambda)$ is then defined as the orthogonal complement of the direct sum 
\begin{equation}\label{eq:4.6}
\mathcal{B}_{I,\oplus}^2 (\lambda)  = \bigoplus_{i\in I} \mathcal{B}^2_{\{i\}}(\lambda) \bigoplus_{J \subsetneq I, |J| \ge 2} \mathcal{B}_{J,\rm int}^2 (\lambda)
\end{equation}
in $\mathcal{B}^2_{I}(\lambda)$, i.e.,
\begin{equation}\label{eq:4.7}
\mathcal{B}_{I,\rm int}^2 (\lambda) = \{ f_\mu \in \mathcal{B}^2_{ I}(\lambda) \; | \; \forall_{f_{\nu} \in  \mathcal{B}_{I,\oplus}^2 (\lambda)} \;  \langle f_\mu , f_\nu \rangle_{\mathcal{B}^2 (\lambda)} = 0 \}.
\end{equation}
Note that the orthogonality of the subspaces in the induction hypothesis guarantees that the definitions \eqref{eq:4.6} and \eqref{eq:4.7} are meaningful.  Therefore, for the induction to proceed, this orthogonality needs to be extended to subspaces featuring subsets $I$ or cardinality at most $k+1$. This is done next.

\begin{lemma}
\label{lem:4.5}
Under the induction hypothesis, the subspaces $\mathcal{B}^2_{\{1\}}(\lambda), \ldots, \mathcal{B}^2_{\{d\}}(\lambda)$, and $\mathcal{B}_{I,\rm int}^2 (\lambda) $ with $I \subseteq D$, $2 \le |I| \le k+1$ are mutually orthogonal.
\end{lemma}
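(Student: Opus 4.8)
\section*{Proof proposal for Lemma~\ref{lem:4.5}}

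The plan is to check each pair of the listed subspaces directly, observing that a pair both of whose members are indexed by sets of cardinality at most $k$ is already orthogonal by the induction hypothesis, so only pairs involving at least one subspace $\mathcal{B}^2_{I,\rm int}(\lambda)$ with $|I| = k+1$ need attention. Before the case analysis I would record the following consequence of the induction hypothesis: for every non-empty $K \subseteq D$ with $|K| \le k$, each $\mathcal{B}^2_{K',\rm int}(\lambda)$ with $K' \subseteq K$, $|K'| \ge 2$ is a closed subspace of $\mathcal{B}^2 (\lambda)$ and
\[
\mathcal{B}^2_K(\lambda) = \bigoplus_{i \in K}\mathcal{B}^2_{\{i\}}(\lambda) \bigoplus_{K' \subseteq K,\; |K'| \ge 2} \mathcal{B}^2_{K',\rm int}(\lambda).
\]
This follows by a short induction on $|K|$: the case $|K| = 1$ is trivial, the case $|K| = 2$ is Lemma~\ref{lem:4.4} together with the completeness of the single-variable spaces from Proposition~\ref{prop:4.1} and the projection theorem in the Hilbert space $\mathcal{B}^2 (\lambda)$, and for $2 < |K| \le k$ it holds because, by \eqref{eq:4.6}--\eqref{eq:4.7}, $\mathcal{B}^2_{K,\rm int}(\lambda)$ is the orthogonal complement of the closed subspace $\mathcal{B}^2_{K,\oplus}(\lambda)$ inside the complete subspace $\mathcal{B}^2_K(\lambda)$. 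The form in which I will use this is: if $K \subsetneq I$ and $|K| \le k$, then $\mathcal{B}^2_K(\lambda) \subseteq \mathcal{B}^2_{I,\oplus}(\lambda)$, since each summand above is then also a summand of $\mathcal{B}^2_{I,\oplus}(\lambda)$ in \eqref{eq:4.6}.

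Now fix $I \subseteq D$ with $|I| = k+1$. For a singleton $\{\ell\}$: if $\ell \in I$, then $\mathcal{B}^2_{\{\ell\}}(\lambda)$ is one of the direct summands of $\mathcal{B}^2_{I,\oplus}(\lambda)$ in \eqref{eq:4.6}, so $\mathcal{B}^2_{I,\rm int}(\lambda) \perp \mathcal{B}^2_{\{\ell\}}(\lambda)$ by \eqref{eq:4.7}; if $\ell \notin I$, then $I$ and $\{\ell\}$ are disjoint, and orthogonality follows from Lemma~\ref{lem:4.2} and the inclusion $\mathcal{B}^2_{I,\rm int}(\lambda) \subseteq \mathcal{B}^2_I(\lambda)$. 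For a second interaction subspace $\mathcal{B}^2_{J,\rm int}(\lambda)$ with $J \neq I$ and $2 \le |J| \le k+1$: if $I \cap J = \varnothing$, Lemma~\ref{lem:4.2} again applies; if $J \subsetneq I$, then $2 \le |J| \le k$ and $\mathcal{B}^2_{J,\rm int}(\lambda)$ is a summand of $\mathcal{B}^2_{I,\oplus}(\lambda)$, so orthogonality follows from \eqref{eq:4.7}. The mirror possibility $I \subsetneq J$ cannot occur because $|I| \ge |J|$.

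This leaves the main case, in which $K := I \cap J$ is non-empty and a proper subset of both $I$ and $J$; here I would invoke Lemma~\ref{lem:4.3}. Since $K \subsetneq I$ forces $|K| \le |I| - 1 = k$, the preliminary observation gives $\mathcal{B}^2_K(\lambda) \subseteq \mathcal{B}^2_{I,\oplus}(\lambda)$; combining this with $\mathcal{B}^2_{I,\rm int}(\lambda) \perp \mathcal{B}^2_{I,\oplus}(\lambda)$ from \eqref{eq:4.7} and $\mathcal{B}^2_{I,\rm int}(\lambda) \subseteq \mathcal{B}^2_I(\lambda)$ shows $\mathcal{B}^2_{I,\rm int}(\lambda) \subseteq \mathcal{B}^2_{K,I,\perp}(\lambda)$ in the sense of \eqref{eq:4.4}. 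The identical argument with $J$ in place of $I$ — using $|K| \le |J| - 1 \le k$, the preliminary observation, and the fact that $\mathcal{B}^2_{J,\rm int}(\lambda)$ is, by its definition (via Lemma~\ref{lem:4.4} when $|J| = 2$ and via \eqref{eq:4.6}--\eqref{eq:4.7} otherwise), orthogonal to the corresponding direct sum of lower-order subspaces over $J$, which contains $\mathcal{B}^2_K(\lambda)$ — shows $\mathcal{B}^2_{J,\rm int}(\lambda) \subseteq \mathcal{B}^2_{K,J,\perp}(\lambda)$. Because $I \neq J$ and $I \cap J = K \neq \varnothing$, Lemma~\ref{lem:4.3} yields $\mathcal{B}^2_{K,I,\perp}(\lambda) \perp \mathcal{B}^2_{K,J,\perp}(\lambda)$, hence $\mathcal{B}^2_{I,\rm int}(\lambda) \perp \mathcal{B}^2_{J,\rm int}(\lambda)$. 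Together with the induction hypothesis, this establishes mutual orthogonality of all the listed subspaces. I expect the only genuinely delicate point to be the preliminary orthogonal decomposition of $\mathcal{B}^2_K(\lambda)$ and the resulting inclusion $\mathcal{B}^2_K(\lambda) \subseteq \mathcal{B}^2_{I,\oplus}(\lambda)$: this inclusion is exactly what converts the geometric statement of Lemma~\ref{lem:4.3} into orthogonality of the two interaction subspaces, and keeping the cardinality bounds ($|K| \le k$) straight in each subcase is where the argument must be handled with care.
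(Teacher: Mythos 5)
Your proof is correct and follows essentially the same route as the paper: the same case analysis (singletons via construction or Lemma~\ref{lem:4.2}, nested or disjoint index sets via Eq.~\eqref{eq:4.7} or Lemma~\ref{lem:4.2}, and the overlapping case via the inclusions into $\mathcal{B}^2_{K,I,\perp}$ and $\mathcal{B}^2_{K,J,\perp}$ followed by Lemma~\ref{lem:4.3}). The only difference is that you spell out, via the preliminary orthogonal decomposition of $\mathcal{B}^2_K(\lambda)$ and the inclusion $\mathcal{B}^2_K(\lambda)\subseteq\mathcal{B}^2_{I,\oplus}(\lambda)$, the step the paper dismisses with ``it is easy to see''; this is a genuine and welcome clarification but not a different argument.
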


\begin{proof}
Considering the induction hypothesis, it remains to be shown that for an arbitrary $I \subseteq D$ with $|I| = k+1$,  $\mathcal{B}_{I,\rm int}^2 (\lambda)$ is orthogonal to (i) $\mathcal{B}_{\{i\}}^2 (\lambda)$ for any $i \in D$; and (ii) any $\mathcal{B}_{J,\rm int}^2 (\lambda)$ where $J \subseteq D$, $I \neq J$ and $|J| \le k+1$. As in the proof of Lemma~\ref{lem:4.4}, (i) follows by construction if $i \in I$ and from Lemma~\ref{lem:4.2} when $i \neq I$. To show (ii), first observe that the definition \eqref{eq:4.7} implies the orthogonality of $\mathcal{B}_{I,\rm int}^2 (\lambda)$ and $\mathcal{B}_{J,\rm int}^2 (\lambda)$ whenever $J \subsetneq I$, because $\mathcal{B}_{J,\rm int}^2 (\lambda) \subseteq \mathcal{B}_{I,\oplus}^2 (\lambda)$. Also, because $\mathcal{B}_{I,\rm int}^2 (\lambda) \subseteq \mathcal{B}_{I}^2 (\lambda)$ and similarly for $J$, (ii) follows from Lemma~\ref{lem:4.2} whenever $I \cap J = \varnothing$. It thus remains to prove (ii) when $I \cap J = K$ where $K \neq\varnothing$ and $K \neq J$.  Here one can again proceed similarly as in the proof of Lemma~\ref{lem:4.4}. Namely, it is easy to see that $\mathcal{B}_{I,\rm int}^2 (\lambda) \subseteq \mathcal{B}_{K,I, \perp}(\lambda) $ and $\mathcal{B}_{J,\rm int}^2 (\lambda) \subseteq \mathcal{B}_{K,J, \perp}(\lambda) $. This implies (ii) because $\mathcal{B}_{K,I, \perp}(\lambda) $ and $\mathcal{B}_{K,J, \perp}(\lambda)$ are orthogonal by Lemma~\ref{lem:4.3}.
\end{proof}

It is now possible to state and prove the main result of this section.

\begin{theorem}
\label{thm:4.2}
For any $f_\mu \in \mathcal{B}^2 (\lambda)$ and any $I \subseteq D$ with $|I| \ge 2$, $f_{\mu, I, {\rm int}}$ is the unique orthogonal projection of $f_\mu$ on $\mathcal{B}_{I,\rm int}^2 (\lambda)$. Furthermore,
\[
\mathcal{B}^2 (\lambda) = \bigoplus_{i\in I} \mathcal{B}^2_{ \{ i \} }(\lambda) \bigoplus_{I \subsetneq D, |I| \ge 2} \mathcal{B}_{I,\rm int}^2 (\lambda).
\]
 and all spaces featuring in the direct sum are mutually orthogonal.
\end{theorem}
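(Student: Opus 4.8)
The plan is to derive Theorem~\ref{thm:4.2} from the decomposition of Proposition~\ref{prop:4.2}, the projection-and-completeness statement of Proposition~\ref{prop:4.1}, the operator identities behind Theorem~\ref{thm:4.1}, and the orthogonality bookkeeping of Lemmas~\ref{lem:4.2}--\ref{lem:4.5}. Three things have to be established: (a) the subspaces $\mathcal{B}^2_{\{1\}}(\lambda),\dots,\mathcal{B}^2_{\{d\}}(\lambda)$ together with $\mathcal{B}^2_{I,{\rm int}}(\lambda)$ for $2\le|I|\le d$ are mutually orthogonal, so that the recursive definitions~\eqref{eq:4.6}--\eqref{eq:4.7} remain meaningful all the way up to $I=D$; (b) for every $I$ with $|I|\ge2$ one has $f_{\mu,I,{\rm int}}\in\mathcal{B}^2_{I,{\rm int}}(\lambda)$, and also $f_{\mu,i}\in\mathcal{B}^2_{\{i\}}(\lambda)$; and (c) the orthogonal sum of all of these (closed) subspaces equals $\mathcal{B}^2(\lambda)$, whence the summands exhibited in Proposition~\ref{prop:4.2} are the orthogonal projections of $f_\mu$.

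Claim (a) is obtained by running the induction of Lemma~\ref{lem:4.5} to completion: Lemma~\ref{lem:4.4} is the base case $k=2$, and applying Lemma~\ref{lem:4.5} successively for $k=2,\dots,d-1$ yields mutual orthogonality for all $I$ with $2\le|I|\le d$. Granting (a) and (b), claim (c) is routine: $\mathcal{B}^2_{\{i\}}(\lambda)$ is complete by Proposition~\ref{prop:4.1}, each $\mathcal{B}^2_{I,{\rm int}}(\lambda)$ is closed as an orthogonal complement inside $\mathcal{B}^2_I(\lambda)$, so a finite orthogonal sum of them is a closed subspace of $\mathcal{B}^2(\lambda)$; and by Proposition~\ref{prop:4.2} every $f_\mu$ is a finite sum with exactly one summand in each of these subspaces, so the orthogonal sum exhausts $\mathcal{B}^2(\lambda)$. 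The projection statement then follows from the elementary fact that in a finite orthogonal direct sum $H=\bigoplus_\alpha H_\alpha$, a decomposition $x=\sum_\alpha x_\alpha$ with $x_\alpha\in H_\alpha$ automatically has $x_{\alpha_0}$ equal to the orthogonal projection of $x$ onto $H_{\alpha_0}$, since $x\ominus x_{\alpha_0}=\bigoplus_{\alpha\ne\alpha_0}x_\alpha$ is orthogonal to $H_{\alpha_0}$; uniqueness comes from completeness of $\mathcal{B}^2_{I,{\rm int}}(\lambda)$.

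The heart of the matter, and what I expect to be the main obstacle, is claim (b). The crucial observation is that $f_{\mu,I,{\rm int}}$ projects trivially onto $\mathcal{B}^2_{I\setminus\{j\}}(\lambda)$ for every $j\in I$. To see this, combine the identity $f_{\mu,I,{\rm int}}=g_{\mu,I}\oplus\kappa^{(-1)^{|I|+1}}$ (derived in the proof of Proposition~\ref{prop:4.3}, with $\kappa=f_{\mu,\varnothing}\prop21$) with the formula $g_{\mu,I}=\bigl(\prod_{i\in I}({\rm id}-P_i)\bigr)P_{D\setminus I}(f_\mu)$ of Theorem~\ref{thm:4.1}. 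Since the $P_i$ commute and each is idempotent by Lemma~\ref{lem:4.1}, one gets $P_j\circ\prod_{i\in I}({\rm id}-P_i)=\bigl(\prod_{i\in I\setminus\{j\}}({\rm id}-P_i)\bigr)\circ(P_j-P_j\circ P_j)$, which is the zero operator; hence $P_j(g_{\mu,I})\prop21$ and therefore $P_j(f_{\mu,I,{\rm int}})\prop21$ for each $j\in I$. On the other hand, Proposition~\ref{prop:4.1} shows that for any $f\in\mathcal{B}^2_I(\lambda)$ the orthogonal projection of $f$ onto $\mathcal{B}^2_{I\setminus\{j\}}(\lambda)$ is exactly $P_j(f)$: indeed $P_{D\setminus(I\setminus\{j\})}=P_j\circ P_{D\setminus I}$, and $P_{D\setminus I}$ fixes $\mathcal{B}^2_I(\lambda)$ by Lemma~\ref{lem:4.1}. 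Consequently $f_{\mu,I,{\rm int}}=f_{\mu,I,{\rm int}}\ominus P_j(f_{\mu,I,{\rm int}})$ is orthogonal to $\mathcal{B}^2_{I\setminus\{j\}}(\lambda)$ for every $j\in I$.

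To finish claim (b), observe that each generating subspace of $\mathcal{B}^2_{I,\oplus}(\lambda)$ in~\eqref{eq:4.6} is contained in $\mathcal{B}^2_{I\setminus\{j\}}(\lambda)$ for a suitable $j\in I$: for $\mathcal{B}^2_{\{i\}}(\lambda)$ take any $j\in I\setminus\{i\}$, and for $\mathcal{B}^2_{J,{\rm int}}(\lambda)\subseteq\mathcal{B}^2_J(\lambda)$ with $J\subsetneq I$ take any $j\in I\setminus J$. Hence $f_{\mu,I,{\rm int}}\perp\mathcal{B}^2_{I,\oplus}(\lambda)$, and since $f_{\mu,I,{\rm int}}\in\mathcal{B}^2_I(\lambda)$ it lies in $\mathcal{B}^2_{I,{\rm int}}(\lambda)$ by~\eqref{eq:4.7} (and by Lemma~\ref{lem:4.4} when $|I|=2$); the membership $f_{\mu,i}\in\mathcal{B}^2_{\{i\}}(\lambda)$ is immediate from Proposition~\ref{prop:4.1}. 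The only delicate point is verifying the operator manipulations carefully — in particular that $P_{D\setminus I}$ fixes $\mathcal{B}^2_I(\lambda)$ and that $P_j$ annihilates $\prod_{i\in I}({\rm id}-P_i)$ — after which Theorem~\ref{thm:4.2} assembles from (a), (b), (c) as above.
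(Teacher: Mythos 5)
Your proposal is correct, but the route you take for the central step is genuinely different from the paper's. The paper proves the projection property by induction on $|I|$: the base case $|I|=2$ follows from Lemma~\ref{lem:4.4} together with the identity $f_{\mu,I,\mathrm{int}}=f_{\mu,I}\ominus f_{\mu,i}\ominus f_{\mu,j}$ of Proposition~\ref{prop:4.3}, and the inductive step first identifies $\bigoplus_{i\in I}f_{\mu,i}\oplus\bigoplus_{J\subsetneq I,\,|J|\ge 2}f_{\mu,J,\mathrm{int}}$ as the orthogonal projection of $f_\mu$ onto $\mathcal{B}^2_{I,\oplus}(\lambda)$ and then reads off $f_{\mu,I,\mathrm{int}}$ as the residual projection onto the orthogonal complement inside $\mathcal{B}^2_{I}(\lambda)$. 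You instead avoid any induction on the projection property: you establish the membership $f_{\mu,I,\mathrm{int}}\in\mathcal{B}^2_{I,\mathrm{int}}(\lambda)$ directly, for all $I$ at once, by exploiting the annihilation identity $P_j\circ(\mathrm{id}-P_j)=0$ from the operator calculus of Kuo et al.\ --- which the paper imports for Theorem~\ref{thm:4.1} but does not reuse in the proof of Theorem~\ref{thm:4.2} --- and then conclude via the generic Hilbert-space fact that in a decomposition over mutually orthogonal closed subspaces each summand is automatically the orthogonal projection of the whole. Both arguments rely on the same orthogonality bookkeeping from Lemmas~\ref{lem:4.4} and \ref{lem:4.5}, and your reductions (that $P_j$ computes the projection of $\mathcal{B}^2_I(\lambda)$ onto $\mathcal{B}^2_{I\setminus\{j\}}(\lambda)$, and that every generator of $\mathcal{B}^2_{I,\oplus}(\lambda)$ sits inside some $\mathcal{B}^2_{I\setminus\{j\}}(\lambda)$) are sound. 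What your version buys is a transparent, non-inductive explanation of why $f_{\mu,I,\mathrm{int}}$ is killed by every lower-order projection, which incidentally makes the margin-free property in Proposition~\ref{prop:4.4}~(ii) immediate; what the paper's version buys is an argument that runs exactly in parallel with the recursive construction of the subspaces $\mathcal{B}^2_{I,\mathrm{int}}(\lambda)$, so that completeness and well-definedness of each complement are available at precisely the stage where they are needed.
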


\begin{proof}
First note that for any $I \subseteq D$ with $|I| \ge 2$, $\mathcal{B}_{I,\rm int}^2 (\lambda)$ is a complete subspace of $\mathcal{B}^2_{ I }(\lambda)$ by construction and Proposition~\ref{prop:4.1}. The claim that for any $I \subseteq D$ with $|I| \ge 2$, $f_{\mu, I, {\rm int}}$ is the unique orthogonal projection of $f_\mu$ on $\mathcal{B}_{I,\rm int}^2 (\lambda)$ can be showed by induction. When $I = \{i, j\}$ for some distinct $i, j \in D$, the base step is a consequence of Lemma~\ref{lem:4.4}, given the fact that $f_{\mu, I, {\rm int}} = f_{\mu, I} \ominus f_{\mu,i} \ominus f_{\mu,j}$ as established in Proposition~\ref{prop:4.3}, and that $f_{\mu,i}$ and $f_{\mu,j} $ are orthogonal projections on $\mathcal{B}^2_{\{i\}}(\lambda)$ and  $\mathcal{B}^2_{\{j\}}(\lambda)$, respectively, stated in Proposition~\ref{prop:4.1}. 

To carry out the inductive step, suppose that for any $J \subsetneq D$ with $2 \le |J| \le k$, $f_{\mu, J, {\rm int}}$ is the unique orthogonal projection of $f_\mu$ on $\mathcal{B}^2_{J, \rm int}(\lambda)$. Let $I \subseteq D$ be arbitrary with $|I| =k+1$. The induction hypothesis implies that 
\[
\bigoplus_{i\in I} f_{\mu,i} \oplus\bigoplus_{J \subsetneq I, |J| \ge 2} f_{\mu, J, {\rm int}} 
\]
is the unique orthogonal projection of $f_{\mu}$ on $\mathcal{B}^2_{I,\oplus}(\lambda)$. Hence Eq.~\eqref{eq:4.6} and the expression of $f_{\mu, I, \rm int}$ in  Proposition~\ref{prop:4.3} together imply that $f_{\mu, I, \rm int}$ is the unique orthogonal projection of $f_{\mu}$ on $\mathcal{B}^2_{I,\rm int}(\lambda)$. 

Finally, the mutual orthogonality of the spaces $\mathcal{B}^2_{\{1\}}(\lambda), \ldots, \mathcal{B}^2_{\{d\}}(\lambda)$, and $\mathcal{B}_{I,\rm int}^2 (\lambda) $ with $I \subseteq D$, $2 \le |I|$ follows easily by induction from Lemma~\ref{lem:4.4} and Lemma~\ref{lem:4.5}.
\end{proof}

By analogy to Eq.~\eqref{eq:3.3} in the bivariate case, the so-called independence space as 
\[
\mathcal{B}_{\rm ind}^2 (\lambda) = \{ \mu \in \mathcal{B}^2 (\lambda) \;| \;  \exists_{\mu_1 \in \mathcal{B}^2(\lambda_1)}\; \cdots \; \exists_{\mu_d \in \mathcal{B}^2(\lambda_d)} \; \mu = \mu_1 \otimes \cdots \otimes \mu_d\}.
\]
Using analogous arguments as in the proof of Lemma~\ref{lem:3.3}, one finds that $\mathcal{B}_{\rm ind}^2 (\lambda)$ is complete and
\[
\mathcal{B}_{\rm ind}^2 (\lambda) = \mathcal{B}_{\{1\}}^2 (\lambda) \oplus \cdots \oplus  \mathcal{B}_{\{d\}}^2 (\lambda).
\]
The following final result of this subsection, which follows directly from Theorem~\ref{thm:4.2}, is the multivariate analog of Proposition~\ref{prop:3.2}.

\begin{corollary}
For any $f_\mu \in \mathcal{B}^2 (\lambda)$,  the independence part $f_{\mu, {\rm ind}}$  introduced in Proposition~\ref{prop:4.2} is the unique orthogonal projection of $f_\mu$ on $\mathcal{B}_{\rm ind}^2 (\lambda)$.
\end{corollary}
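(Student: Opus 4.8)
The plan is to read the result off directly from the orthogonal decomposition established in Theorem~\ref{thm:4.2}, with essentially no extra work. First I would record the two structural facts we need. On the one hand, $\mathcal{B}_{\rm ind}^2(\lambda)=\mathcal{B}_{\{1\}}^2(\lambda)\oplus\cdots\oplus\mathcal{B}_{\{d\}}^2(\lambda)$ is a complete subspace of $\mathcal{B}^2(\lambda)$, as already noted above, so the projection theorem guarantees the existence and uniqueness of an orthogonal projection of any $f_\mu\in\mathcal{B}^2(\lambda)$ onto it. On the other hand, Theorem~\ref{thm:4.2} tells us that the subspaces $\mathcal{B}^2_{\{1\}}(\lambda),\ldots,\mathcal{B}^2_{\{d\}}(\lambda)$ and the interaction subspaces $\mathcal{B}_{I,\rm int}^2(\lambda)$ with $|I|\ge 2$ are mutually orthogonal, and that $f_{\mu,i}$ and $f_{\mu,I,\rm int}$ are the orthogonal projections of $f_\mu$ onto $\mathcal{B}^2_{\{i\}}(\lambda)$ and $\mathcal{B}_{I,\rm int}^2(\lambda)$, respectively.

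Given this, it only remains to identify the projection with $f_{\mu,{\rm ind}}$, which I would do via the projection characterization: check that $f_{\mu,{\rm ind}}\in\mathcal{B}_{\rm ind}^2(\lambda)$ and that $f_\mu\ominus f_{\mu,{\rm ind}}$ is orthogonal to $\mathcal{B}_{\rm ind}^2(\lambda)$. The first point is immediate from Proposition~\ref{prop:4.2}, since $f_{\mu,{\rm ind}}=\bigoplus_{i=1}^d f_{\mu,i}$ with each $f_{\mu,i}\in\mathcal{B}^2_{\{i\}}(\lambda)$. For the second, I would invoke Theorem~\ref{thm:4.2} to write $f_\mu=\bigl(\bigoplus_{i=1}^d f_{\mu,i}\bigr)\oplus\bigl(\bigoplus_{I\subseteq D,\,|I|\ge 2}f_{\mu,I,\rm int}\bigr)$, so that $f_\mu\ominus f_{\mu,{\rm ind}}=\bigoplus_{I\subseteq D,\,|I|\ge 2}f_{\mu,I,\rm int}$ lies in $\bigoplus_{|I|\ge 2}\mathcal{B}_{I,\rm int}^2(\lambda)$. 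By the mutual orthogonality in Theorem~\ref{thm:4.2}, every $\mathcal{B}_{I,\rm int}^2(\lambda)$ with $|I|\ge 2$ is orthogonal to every $\mathcal{B}^2_{\{i\}}(\lambda)$, hence $f_\mu\ominus f_{\mu,{\rm ind}}$ is orthogonal to each summand of $\mathcal{B}_{\rm ind}^2(\lambda)$ and therefore, by bilinearity of the scalar product, to $\mathcal{B}_{\rm ind}^2(\lambda)$ itself. Combined with $f_{\mu,{\rm ind}}\in\mathcal{B}_{\rm ind}^2(\lambda)$ and completeness, the projection theorem yields the claim.

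There is no genuinely hard step here; the only things requiring a word of care are that orthogonality of $f_\mu\ominus f_{\mu,{\rm ind}}$ to each of the finitely many summands $\mathcal{B}^2_{\{i\}}(\lambda)$ entails orthogonality to the whole direct sum $\mathcal{B}_{\rm ind}^2(\lambda)$ (a routine bilinearity argument), and that the grouping of terms in the decomposition of Theorem~\ref{thm:4.2} is exactly such that the ``main-effect'' block coincides with $f_{\mu,{\rm ind}}$ from Proposition~\ref{prop:4.2}. An alternative, equally short route avoids naming the interaction terms altogether: for any $g=\bigoplus_{i=1}^d g_i\in\mathcal{B}_{\rm ind}^2(\lambda)$ one expands $\langle f_\mu\ominus f_{\mu,{\rm ind}},g\rangle_{\mathcal{B}^2(\lambda)}=\sum_{i=1}^d\langle f_\mu\ominus f_{\mu,{\rm ind}},g_i\rangle_{\mathcal{B}^2(\lambda)}$, and for each $i$ splits $f_\mu\ominus f_{\mu,{\rm ind}}=(f_\mu\ominus f_{\mu,i})\ominus\bigoplus_{j\neq i}f_{\mu,j}$, using Proposition~\ref{prop:4.1} to kill $\langle f_\mu\ominus f_{\mu,i},g_i\rangle_{\mathcal{B}^2(\lambda)}$ and Lemma~\ref{lem:4.2} to kill $\langle f_{\mu,j},g_i\rangle_{\mathcal{B}^2(\lambda)}$ for $j\neq i$; completeness of $\mathcal{B}_{\rm ind}^2(\lambda)$ then finishes the argument as before.
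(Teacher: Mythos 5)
Your argument is correct and is essentially the paper's own: the paper simply states that the corollary ``follows directly from Theorem~\ref{thm:4.2}'' together with the completeness of $\mathcal{B}_{\rm ind}^2 (\lambda) = \mathcal{B}_{\{1\}}^2 (\lambda) \oplus \cdots \oplus \mathcal{B}_{\{d\}}^2 (\lambda)$, and your main route spells out exactly that deduction (decompose $f_\mu$ via Theorem~\ref{thm:4.2}, note $f_\mu \ominus f_{\mu,{\rm ind}}$ lies in the sum of the interaction subspaces, and invoke mutual orthogonality plus the projection theorem). The only nit is that the fact that each $f_{\mu,i}$ is the projection onto $\mathcal{B}^2_{\{i\}}(\lambda)$ comes from Proposition~\ref{prop:4.1} rather than Theorem~\ref{thm:4.2} itself, but this does not affect the argument.
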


\subsection{Illustration\label{sec:4.5}}

To form a mental image of the results developed in this section, it may be useful to give a visual representation of the decomposition of the Bayes space $\mathcal{B}^2 (\lambda)$ in the 3-dimensional case. This is done in the example below. A concrete example of orthogonal decomposition of a $d$-variate density will then be presented in Section~\ref{sec:5} for copulas, where the reference measure $\lambda$ is Lebesgue measure on $[0, 1]^d$.

\begin{example}
\label{ex:4.1}
Figure~\ref{fig:1} displays a Venn diagram of an abstract 3-dimensional Bayes space $\mathcal{B}^2 (\lambda)$. In the picture, the sectors marked $1$, $2$, $3$ correspond to the subspaces $\mathcal{B}^2_{\{ 1 \}} (\lambda)$, $\mathcal{B}^2_{\{ 2 \}} (\lambda)$, $\mathcal{B}^2_{\{ 3 \}} (\lambda)$ of maps $f_\mu \in \mathcal{B}^2 (\lambda)$ of three variables that are respectively functions of the first, second, and third argument only. 

The orthogonality of the subspaces $\mathcal{B}^2_{\{ 1 \}} (\lambda)$, $\mathcal{B}^2_{\{ 2 \}} (\lambda)$, $\mathcal{B}^2_{\{ 3 \}} (\lambda)$, established in Lemma~\ref{lem:4.2}, is conveyed graphically by the fact that they do not intersect (though they do in reality, if only because the neutral element $1$ is common to them all). 

In the $3$-variate orthogonal decomposition of $f_\mu$, the components $f_{\mu,1}$, $f_{\mu,2}$, and $f_{\mu,3}$ are the orthogonal projections of $f_\mu$ onto these subspaces, and their direct sum, namely $f_{\mu, \rm ind} = f_{\mu,1} \oplus f_{\mu,2} \oplus f_{\mu,3}$ belongs to the subspace
\[
\mathcal{B}^2_{\rm ind} (\lambda) = \mathcal{B}^2_{\{ 1 \}} (\lambda) \oplus \mathcal{B}^2_{\{ 2 \}} (\lambda) \oplus \mathcal{B}^2_{\{ 3 \}} (\lambda),
\]
which is represented in dark grey in the figure. Therefore, the independent part of the decomposition lies in that dark grey zone.

Represented in light grey in the same graph and respectively labeled $12$, $13$, $23$ are the three orthogonal subspaces $\mathcal{B}^2_{I,{\rm int}} (\lambda)$ corresponding to the subsets $I = \{ 1, 2 \}$, $\{ 1, 3 \}$, and $\{ 2, 3 \}$. Their lack of intersection among themselves and with the sets labeled $1$, $2$, $3$ again conveys their mutual orthogonality established in Lemma~\ref{lem:4.3}.

In the $3$-variate orthogonal decomposition of $f_\mu$, the components $f_{\mu, \{ 1, 2 \}}$, $f_{\mu, \{ 1, 3 \}}$, and $f_{\mu, \{ 2, 3 \}}$ are the orthogonal projections of $f_\mu$ onto these subspaces, and their direct sum, namely $f_{\mu, \{ 1, 2 \}, {\rm int}} \oplus f_{\mu, \{ 1, 3 \}, {\rm int}} \oplus f_{\mu, \{ 2, 3 \}, {\rm int}}$ belongs to the subspace
\[
\mathcal{B}^2_{ \{ 1, 2 \}, {\rm int}} (\lambda) \oplus \mathcal{B}^2_{ \{ 1, 3 \}, {\rm int}} (\lambda) \oplus \mathcal{B}^2_{ \{ 2, 3 \}, {\rm int}} (\lambda),
\]
which is represented in light grey in the figure.

Finally, the white annulus labeled $123$ in Figure~\ref{fig:1} is $\mathcal{B}^2_{\{1,2,3\}, {\rm int}} (\lambda)$, i.e., the orthogonal complement of the direct sum $\mathcal{B}^2_{ \{ 1, 2,3 \}, \oplus} (\lambda)$ of the grey zones defined in Eq.~\eqref{eq:4.6}. Summing up, one has 
\begin{multline*}
\mathcal{B}^2 (\lambda) = \mathcal{B}^2_{\{ 1 \}} (\lambda) \oplus \mathcal{B}^2_{\{ 2 \}} (\lambda) \oplus \mathcal{B}^2_{\{ 3 \}} (\lambda) \\
\oplus \mathcal{B}^2_{\{1, 2 \}, {\rm int}} (\lambda) 
\oplus \mathcal{B}^2_{\{1, 3 \}, {\rm int}} (\lambda) \oplus \mathcal{B}^2_{\{2, 3 \}, {\rm int}} (\lambda) \oplus \mathcal{B}^2_{\{1, 2, 3 \}, {\rm int}} (\lambda),
\end{multline*}
as guaranteed by Theorem~\ref{thm:4.2}, and the orthogonal decomposition of 	any $f_\mu \in \mathcal{B}^2 (\lambda)$ takes the form
\[
f_\mu = f_{\mu, 1} \oplus f_{\mu, 2} \oplus f_{\mu, 3} \oplus f_{\mu, \{ 1, 2 \},\rm int} \oplus  f_{\mu, \{ 1, 3 \},\rm int} \oplus  f_{\mu, \{ 2, 3 \},\rm int} \oplus  f_{\mu, \{ 1, 2, 3 \},\rm int}.
\]
\end{example}

\begin{figure}[t!]
\begin{center}
\includegraphics[height=8cm]{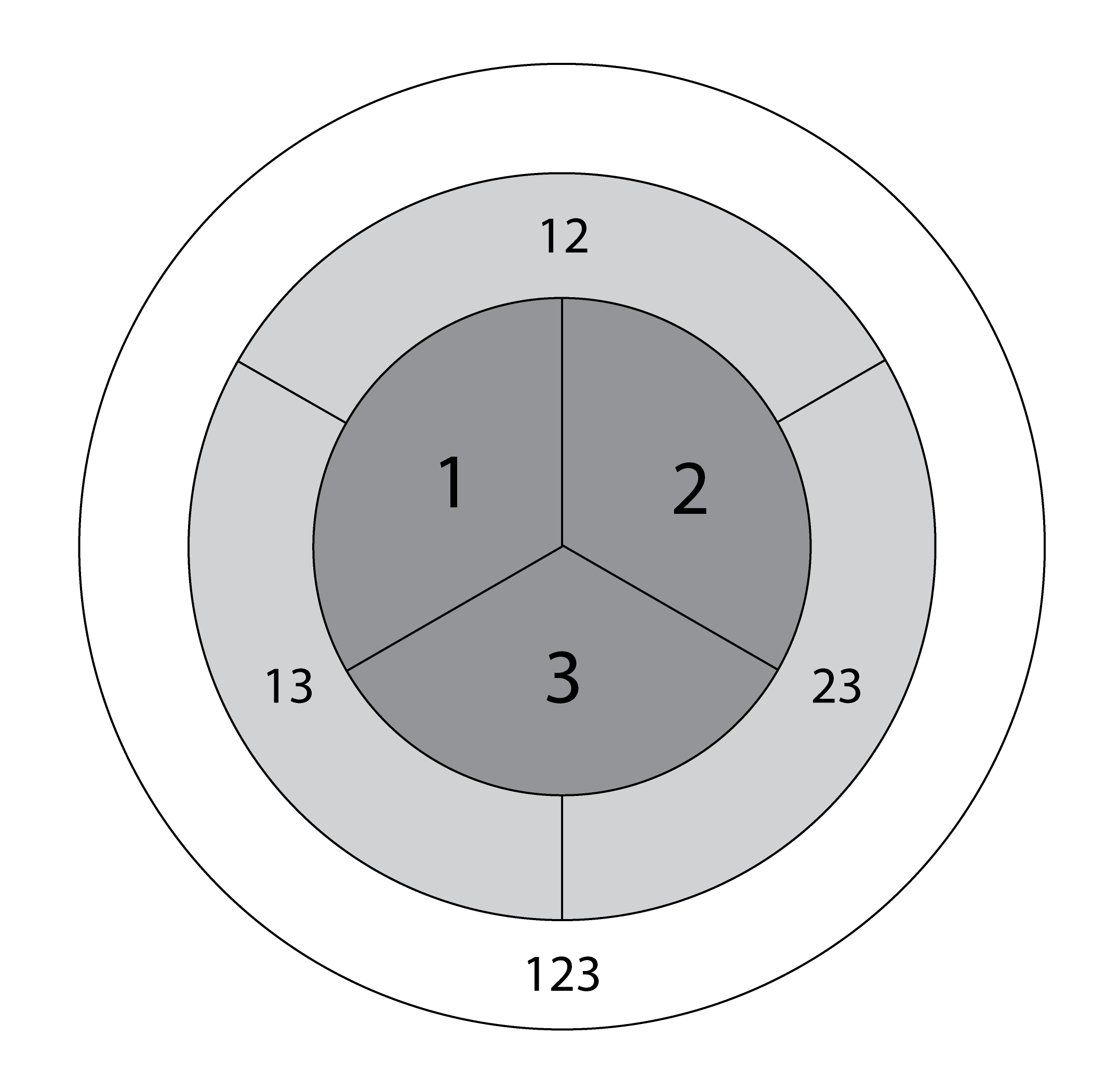}
\end{center}
\caption{Graphical representation of the decomposition of a 3-dimensional Bayes space $\mathcal{B}^2 (\lambda)$ into orthogonal subspaces. See Example~\ref{ex:4.1} for a description.\label{fig:1}}
\end{figure}

\subsection{Consequences of the decomposition of $f_\mu$\label{sec:4.6}}

In this section, consequences of the decomposition of $f_\mu$ developed in Section~\ref{sec:4.3} in combination with Theorem~\ref{thm:4.2} are drawn. This leads to the following multidimensional analog of Proposition~\ref{prop:3.3}. 
\begin{proposition}
\label{prop:4.4}
For any $f_\mu \in \mathcal{B}^2 (\lambda)$, the following statements hold true.
\begin{enumerate}
\item[(i)] 
\emph{Pythagoras' Theorem:} One always has
\[
\| f_\mu\|^2_{\mathcal{B}^2 (\lambda)} = \| f_{\mu,\rm{ind}}\|^2_{\mathcal{B}^2 (\lambda)} + \sum_{I \subseteq D, |I| \ge 2} \| f_{\mu, I,\rm{int}}\|^2_{\mathcal{B}^2 (\lambda)},
\]
where $ \| f_{\mu,\rm{ind}}\|^2_{\mathcal{B}^2 (\lambda)} = \| f_{\mu,1}\|^2_{\mathcal{B}^2 (\lambda)} + \cdots + \| f_{\mu,d}\|^2_{\mathcal{B}^2 (\lambda)}$.
\item[(ii)] 
\emph{Margin-free property:} 
For any given set $I \subseteq D$ with $|I| \ge 2$, let $f_\nu =f_{\mu, I, {\rm int}}$. Then for each $i \in D$, the $i$-th univariate geometric margin $f_{\nu,i}$ of $f_\nu$ satisfies $f_{\nu,i} \prop2 1$. Moreover, for any set $J \subsetneq I$ with $|J| \ge 2$, the $J$-th geometric margin of $f_\nu$ satisfies $f_{\nu, J} \prop2 1$.
\item[(iii)] 
\emph{Independence:}
If $f_\mu = f_{\mu_1} \cdots f_{\mu_d} \in \mathcal{B}^2_{\rm ind}(\lambda)$, then for each $i \in D$, $f_{\mu,i} \prop2 f_{\mu_i}$ and 
\[
f_\mu \in \mathcal{B}^2_{\rm ind}(\lambda) \quad \Longleftrightarrow \quad f_\mu \prop2 f_{\mu,1} \oplus \cdots \oplus f_{\mu,d} \quad \Longleftrightarrow \quad  \forall_{I \subseteq D, |I| \ge 2} \; f_{\mu, I , {\rm int}} \prop2 1.
\] 
\item[(iv)] 
\emph{Yule perturbation:} 
For $g \prop2 g_1 \cdots g_d \in  \mathcal{B}^2_{\rm ind}(\lambda)$, set $h = f_\mu \oplus g$. Then the independence and interaction parts of $h$ satisfy 
\[
h_{\rm ind} = f_{\mu, {\rm ind}} \oplus g, \quad h_{\rm int} = f_{\mu, {\rm int}},
\]
where 
\[
f_{\mu, \rm int} = \bigoplus_{I \subseteq D, | I | \ge 2} f_{\mu, I, {\rm int}}.
\]
In particular, the geometric margins of $h$ satisfy $h_i\prop2 f_{\mu,i} g_i$ for all $i \in D$.
\end{enumerate}
\end{proposition}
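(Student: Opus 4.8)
The plan is to deduce all four assertions from Theorem~\ref{thm:4.2}, Proposition~\ref{prop:4.1} and Propositions~\ref{prop:4.2}--\ref{prop:4.3}, using three structural facts: the decomposition of $f_\mu$ is a perturbation-sum of orthogonal projections onto the mutually orthogonal subspaces $\mathcal{B}^2_{\{1\}}(\lambda),\dots,\mathcal{B}^2_{\{d\}}(\lambda)$ and $\mathcal{B}^2_{I,\rm int}(\lambda)$ with $|I|\ge 2$; all the geometric-margin maps, and hence $f_\mu\mapsto f_{\mu,\rm ind}$, are linear (Lemma~\ref{lem:4.1}); and in a Hilbert space an element orthogonal to a closed subspace is sent by the associated projection to the neutral element $1$, while an element equals $1$ exactly when its norm vanishes. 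Statement~(i) is then immediate: Proposition~\ref{prop:4.2} writes $f_\mu = f_{\mu,\rm ind}\oplus\bigoplus_{I\subseteq D,\,|I|\ge 2}f_{\mu,I,\rm int}$ with the summands in mutually orthogonal subspaces by Theorem~\ref{thm:4.2}, so the squared Bayes norm is additive over them; a second application of Pythagoras to $f_{\mu,\rm ind}=\bigoplus_{i=1}^d f_{\mu,i}$, whose summands lie in the mutually orthogonal $\mathcal{B}^2_{\{i\}}(\lambda)$, gives the stated refinement.

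For statement~(ii), set $f_\nu = f_{\mu,I,\rm int}$, which belongs to $\mathcal{B}^2_{I,\rm int}(\lambda)$ by Theorem~\ref{thm:4.2}. By Proposition~\ref{prop:4.1} the margin $f_{\nu,i}$ is the orthogonal projection of $f_\nu$ onto $\mathcal{B}^2_{\{i\}}(\lambda)$; since $\mathcal{B}^2_{I,\rm int}(\lambda)$ is orthogonal to $\mathcal{B}^2_{\{i\}}(\lambda)$ by the mutual orthogonality in Theorem~\ref{thm:4.2}, this projection is $1$, i.e., $f_{\nu,i}\prop2 1$. For $J\subsetneq I$ with $|J|\ge 2$, $f_{\nu,J}$ is the orthogonal projection of $f_\nu$ onto $\mathcal{B}^2_J(\lambda)$; applying Theorem~\ref{thm:4.2} with $\mathcal{B}^2_J(\lambda)$ in place of $\mathcal{B}^2(\lambda)$ yields $\mathcal{B}^2_J(\lambda)=\bigoplus_{j\in J}\mathcal{B}^2_{\{j\}}(\lambda)\bigoplus_{K\subseteq J,\,|K|\ge 2}\mathcal{B}^2_{K,\rm int}(\lambda)$, and because $J\subsetneq I$ each summand on the right is among the summands of $\mathcal{B}^2_{I,\oplus}(\lambda)$ in Eq.~\eqref{eq:4.6}; hence $\mathcal{B}^2_J(\lambda)\subseteq\mathcal{B}^2_{I,\oplus}(\lambda)$, which is orthogonal to $\mathcal{B}^2_{I,\rm int}(\lambda)\ni f_\nu$, so $f_{\nu,J}\prop2 1$ as well.

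For statement~(iii), when $f_\mu=f_{\mu_1}\cdots f_{\mu_d}$ the identity $f_{\mu,i}\prop2 f_{\mu_i}$ follows just as in Proposition~\ref{prop:3.3}~(iii): substituting $\ln(f_\mu)=\sum_j\ln(f_{\mu_j})$ into the explicit formula for $f_{\mu,i}$ from Proposition~\ref{prop:4.1}, a Fubini argument shows the integral over $\Omega_{D\setminus\{i\}}$ collapses, up to an additive constant in $\omega_i$, to $\lambda_{D\setminus\{i\}}(\Omega_{D\setminus\{i\}})\ln(f_{\mu_i})$. Consequently $f_\mu\prop2\prod_j f_{\mu,j}=f_{\mu,\rm ind}$ whenever $f_\mu\in\mathcal{B}^2_{\rm ind}(\lambda)$, while the reverse implication is trivial since $f_{\mu,\rm ind}\in\mathcal{B}^2_{\rm ind}(\lambda)$ always; the second equivalence then comes from $f_\mu\ominus f_{\mu,\rm ind}=\bigoplus_{|I|\ge 2}f_{\mu,I,\rm int}$, whose squared norm equals $\sum_{|I|\ge 2}\|f_{\mu,I,\rm int}\|^2_{\mathcal{B}^2(\lambda)}$ by~(i), so it is $1$ exactly when every $f_{\mu,I,\rm int}\prop2 1$. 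For statement~(iv), linearity of the projection onto $\mathcal{B}^2_{\rm ind}(\lambda)=\bigoplus_{i=1}^d\mathcal{B}^2_{\{i\}}(\lambda)$ gives $h_{\rm ind}=f_{\mu,\rm ind}\oplus g$ since $g\in\mathcal{B}^2_{\rm ind}(\lambda)$ is fixed by that projection, whence $h_{\rm int}=h\ominus h_{\rm ind}=(f_\mu\oplus g)\ominus(f_{\mu,\rm ind}\oplus g)=f_{\mu,\rm int}$; likewise $h_i$, the projection of $f_\mu\oplus g$ onto $\mathcal{B}^2_{\{i\}}(\lambda)$, equals $f_{\mu,i}\oplus g_i$ because the $i$-th geometric margin of $g=g_1\cdots g_d$ is $\prop2 g_i$ by~(iii).

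The only step requiring genuine care is the containment $\mathcal{B}^2_J(\lambda)\subseteq\mathcal{B}^2_{I,\oplus}(\lambda)$ invoked in statement~(ii): it rests on the recursive construction of the interaction subspaces in Eq.~\eqref{eq:4.6} together with the fact that the decomposition of Theorem~\ref{thm:4.2} applies, after the obvious identification of $\mathcal{B}^2_J(\lambda)$ with $\mathcal{B}^2(\lambda_J)$, to the smaller index set $J$. Everything else reduces to Pythagoras, linearity of the projections, and the elementary observation that an element orthogonal to a subspace projects to that subspace's neutral element.
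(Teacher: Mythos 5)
Your proposal is correct and follows the route the paper intends: the paper states Proposition~\ref{prop:4.4} without a separate proof, treating all four parts as direct consequences of the decomposition of Section~\ref{sec:4.3}, Proposition~\ref{prop:4.1}, and the mutual orthogonality in Theorem~\ref{thm:4.2}, which is exactly the machinery you invoke. Your filled-in details, including the containment $\mathcal{B}^2_J(\lambda)\subseteq\mathcal{B}^2_{I,\oplus}(\lambda)$ needed for the margin-free property, are sound.
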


Finally, the decomposition of $f_\mu$ in Section~\ref{sec:4.3} can be mirrored into the clr space in an obvious way. From Proposition~\ref{prop:4.2}, any $f_\mu \in \mathcal{B}^2 (\lambda)$ satisfies 
\[
\clr (f_\mu) =  \clr (f_{\mu, {\rm ind}}) + \clr (f_{\mu, \rm int}),
\]
where 
\[
\clr (f_{\mu, {\rm ind}}) = \sum_{ i \in D} \clr (f_{\mu, i}), \quad \clr (f_{\mu, \rm int}) = \sum_{I \subseteq D, | I | \ge 2} \clr (f_{\mu, I, {\rm int}})
\]
and for any $I \subseteq D$ with $| I | \ge 2$, 
\[
\clr (f_{\mu, I, {\rm int}} ) =  \sum_{J \subseteq I, J \neq \varnothing}   (-1)^{|I \setminus J|}\clr (f_{\mu, J}) = \clr (f_{\mu, I})- \sum_{J \subsetneq I, |J| \ge 2} \clr (f_{\mu, J, {\rm int}}) - \sum_{i\in I} \clr (f_{\mu,i}),
\]
where the last equality follows from Proposition~\ref{prop:4.3}. Furthermore, 
\[
\| \clr (f_\mu)  \|^2_{L_2(\lambda)} = \sum_{i=1}^d \| \clr (f_{\mu,i})\|^2_{L^2 (\lambda)} + \sum_{I \subseteq D, |I| \ge 2} \|\clr (f_{\mu, I,\rm{int}})\|^2_{L^2 (\lambda)}
\]
and for any $I \subseteq D$ with $|I| \ge 2$,
\[
\|\clr (f_{\mu, I,\rm{int}})\|^2_{L^2 (\lambda)} = \sum_{i\in I} \| \clr (f_{\mu,i})\|^2_{L^2 (\lambda)} + \sum_{J \subsetneq I, |J| \ge 2} \|\clr (f_{\mu, J,\rm{int}})\|^2_{L^2 (\lambda)}.
\]

\section{Applications to probability densities\label{sec:5}}

This section explores implications of the results in Section~\ref{sec:4} to probability densities. Given a continuous random vector $\boldsymbol{X} = (X_1, \ldots, X_d)^\top$ with cumulative distribution function $F$, let $f$ be its Lebesgue density $f$ and for each $i \in D$, let $F_i$ and $f_i$ respectively denote the cumulative distribution function and Lebesgue density of the $i$th component $X_i$ of $\boldsymbol{X}$. Furthermore, set $\boldsymbol{F} = (F_1, \ldots, F_d)$.

Sklar's decomposition theorem \citep{Sklar:1959, Nelsen:2006} implies the existence of a function $C: [0,1]^d \to [0,1]$ such that, for all $x_1,\ldots, x_d \in \mathbb{R}$,
\[
\Pr(X_1 \le x_1,\ldots, X_d \le x_d) = C\{ F_1(x_1),\ldots, F_d(x_d)\}.
\]
The map $C: [0,1]^d \to [0,1]$, called a copula, is a multivariate distribution function with standard univariate margins. Assuming that $C$ has a Lebesgue density $c$, one then has that, for almost all $x_1, \ldots, x_d \in \mathbb{R}$, 
\begin{equation}
\label{eq:5.1}
f(x_1,\ldots, x_d) = c\{ F_1(x_1),\ldots, F_d(x_d)\} \prod_{i=1}^d f_i(x_i).
\end{equation} 
In Sections~\ref{sec:5.1} and \ref{sec:5.2}, the methodology developed in this paper is investigated for two different choices of reference measure $\lambda$.

\subsection{Densities with respect to the Lebesgue measure\label{sec:5.1}}

Suppose that the reference measure $\lambda$ is chosen to be the Lebesgue measure on $\mathbb{R}^d$ endowed with the usual Borel $\sigma$-field $\mathbb{B}^d$. Then $\lambda = \lambda_1 \times \cdots \times \lambda_d$, as required in Section~\ref{sec:4}, where for each $i \in D$, $\lambda_i$ is the Lebesgue measure on $(\mathbb{R},\mathbb{B})$. As the Lebesgue measure on $(\mathbb{R},\mathbb{B})$ is not finite, the results on Bayes spaces developed in that section are then only applicable when the following assumption holds.

\begin{assumption}
\label{ass:5.1}
The $d$-variate random vector $\boldsymbol{X}$ has a support of the form $\Omega = \Omega_1 \times \cdots \times \Omega_d$ such that $\lambda_i(\Omega_i) < \infty$ for all $i \in D$, and $\boldsymbol{X}$ has a $\lambda$-density $f$ such that $\ln (f)$ is defined almost everywhere and square-integrable on $\Omega$.
\end{assumption}

Note that in practice, Assumption~\ref{ass:5.1} is less restrictive than it may seem. As frequently done in functional data analysis, densities are typically truncated so as to be supported on a hyper-rectangle.  For example, \cite{Hron/etal:2020} proceed this way to investigate the bivariate Gaussian distribution and analyze real data. 

When the random variables $X_1,\ldots, X_d$ are mutually independent, Proposition~\ref{prop:4.4} (iii) implies that the ordinary (or arithmetic) margins $f_1, \ldots, f_d$ of $f$ are proportional to the univariate geometric margins of $f$, denoted $f_{\mu, 1}, \ldots, f_{\mu, d}$, where ${\rm \mu}$ stands for the probability measure associated with $\boldsymbol{X}$.

When the variables $X_1,\ldots, X_d$ are independent, the link between their marginal densities and their geometric margins is more subtle. In addition to Assumption~\ref{ass:5.1}, suppose that for each $i \in D$, $\ln (f_i)$ is square-integrable on $\Omega_i$. In view of Eq.~\eqref{eq:5.1}, the copula density $c$ is then strictly positive almost everywhere on $[0,1]^d$ and, by Minkowski's inequality, the map $\ln \{c (F_1, \ldots, F_d) \}$ is square integrable on~$\Omega$.

From Proposition~\ref{prop:4.1} and Eq.~\eqref{eq:5.1}, one has, for every $i \in D$,
\[
f_{\mu, i}=  \exp \left\{ \frac{1}{\lambda_{D \setminus \{i\}}(\Omega_{D \setminus \{ i \}})}  \int_{\Omega_{D \setminus \{i\}}}\Bigl[ \ln \{ c (F_1,\ldots, F_d) \} + \sum_{j=1}^d \ln (f_j) \Bigr] d \lambda_{D \setminus \{ i \}} \right\}.
\]
Because for any distinct $i, j \in D$, the integral of $\ln(f_j)$ over $\Omega_{D \setminus \{i\}}$ is a constant, one finds that $f_{\mu, i} \prop2 f_i g_i$, where
\[
 g_i =  \exp \left\{ \frac{1}{\lambda_{D \setminus \{i\}}(\Omega_{D \setminus \{i\}})}  \int_{\Omega_{D \setminus \{i\} }} \ln \{ c (F_1,\ldots, F_d) \} d \lambda_{D \setminus \{ i \} } \right\}
\]
depends on both the copula and on the marginals $F_1,\ldots, F_d$. A concrete illustration of this fact is provided below.

\begin{example}
\label{ex:5.1}
Consider the bivariate Gaussian copula with dependence parameter $\rho \in (-1, 1)$,  i.e., a bivariate distribution with density given, for all $u_1, u_2 \in (0,1)$, by
\[
c_\rho (u_1, u_2) = \frac{1}{\sqrt{1- \rho^2}} \exp \left[ -\frac{1}{2(1-\rho^2)} \, \{ \rho^2 (x_1^2 + x_2^2) - 2 \rho x_1 x_2 \} \right] ,
\]
where $x_1 = \Phi^{-1} (u_1)$, $x_2 = \Phi^{-1} (u_2)$, and $\Phi^{-1}$ stands for the quantile function of the univariate Gaussian distribution with zero mean and unit variance, denoted $\mathcal{N} (0, 1)$.

Straightforward calculations based on the formulas given in Section~\ref{sec:3} yield algebraically closed expressions for the geometric margins and the interaction of the Gaussian copula. Namely, one finds, for all $u_1, u_2 \in (0,1)$ and $i \in \{ 1, 2 \}$, 
\[
\clr (c_{\rho, i}) (x_i) =  -\frac{\rho^2}{2 (1 - \rho^2)} \Bigl[  \{ \Phi^{-1}(u_i) \} ^2-1\Bigr]
\]
for the two geometric margins and
\[
\clr (c_{\rho,{\rm int}}) (u_1, u_2) = \frac{\rho}{1-\rho^2} \, \Phi^{-1} (u_1) \Phi^{-1} (u_2)
\]
for the interaction. It is clear from these calculations that the geometric margins of $c_\rho$ depend on the value of $\rho$, and hence differ from the arithmetic margins unless $\rho = 0$. The latter case corresponds to independence; one then has $c_\rho=1$. These calculations can also be obtained as a special case of Example~\ref{ex:5.2} below. 
\end{example}

The calculations in Example~\ref{ex:5.1}, which will also obtain as a special case of Example~\ref{ex:5.2} below, are in line with those derived for the truncated normal case considered by~\cite{Hron/etal:2020}, where additional constants are present due to the need to restrict the density to a finite support.

\subsection{Densities with respect to the product of their margins\label{sec:5.2}}

Choosing the Lebesgue measure as the reference measure in Section~\ref{sec:5.1} restricts the random vector $\boldsymbol{X}$ to have a bounded support. Other than by truncation, this limitation can be overcome by choosing a different reference measure. To this end, consider the following assumption. 

\begin{assumption}
\label{ass:5.2}
The $d$-variate random vector $\boldsymbol{X}$ has a support of the form $\Omega = \Omega_1 \times \cdots \times \Omega_d$ and is absolutely continuous with respect to the Lebesgue measure $\lambda$. Furthermore, for every $i \in D$, the marginal density $f_i$ of $X_i$ is strictly positive on $\Omega_i$ and the unique copula $C$ of $\boldsymbol{X}$ has a $\lambda$-density $c$ such that $\ln (c)$ is defined almost everywhere and square-integrable on $[0,1]^d$.
\end{assumption}

Note that in contrast to Assumption~\ref{ass:5.1}, Assumption~\ref{ass:5.2} can be satisfied by a random vector $\boldsymbol{X}$ without its support being necessarily bounded. Let $\pi_{\boldsymbol{X}}$ denote the probability measure of the random vector $\boldsymbol{X}$ and, for each $i \in D$, let $\pi_{X_i}$ be the probability measure corresponding to its $i$th component $X_i$. Further set
\[
\pi_{\perp} = \pi_{X_1} \otimes \cdots \otimes \pi_{X_d}.
\]
Given that the copula $c$ density is strictly positive almost everywhere on $[0,1]^d$, it then follows from Eq.~\eqref{eq:5.1} that $\pi_{\boldsymbol{X}}$ is equivalent to $\pi_\perp$ and that it has a $\pi_\perp$-density $c \circ \boldsymbol{F}$ given, for all $(x_1,\ldots, x_d) \in \Omega$ by 
\begin{equation}
\label{eq:5.2}
(c \circ \boldsymbol{F}) (x_1, \ldots, x_d) = c \{ F_1 (x_1),\ldots, F_d (x_d) \}.
\end{equation}
Upon changing variables, one can see that
\[
\int_{\Omega} \{\ln(c \circ \boldsymbol{F})\}^2 d \pi_\perp = \int_{[0,1]^d} \{\ln(c)\}^2 d \lambda < \infty
\]
because of Assumption~\ref{ass:5.2}, and hence $\pi_{\boldsymbol{X}}$ --- or equivalently $c \circ \boldsymbol{F}$ --- is an element of the Bayes space $\mathcal{B}^2(\pi_\perp)$ on $\Omega$ with reference measure $\pi_\perp$. 

At the same time, the copula density $c$ is an element of the Bayes space $\mathcal{B}^2(\lambda)$ on $[0,1]^d$ with reference measure $\lambda$. For any set $I \subseteq D$, let $c_I$ denote the $I$-th geometric margin of $c$ as per Definition~\ref{def:4.1}.  From Proposition~\ref{prop:4.2}, $c$ can be decomposed as 
\[
c = c_{\rm ind} \oplus \bigoplus_{I \subseteq D , |I| \ge 2} c_{I, \rm int}.
\]
In this expression, $c_{\rm ind}$ refers to the independence part of $c$ while for any set $I \subseteq D$ with $|I| \ge 2$, $c_{I, \rm int}$ refers to the $I$-th interaction part of $c$, viz.  
\[
c_{\rm ind} = \bigoplus_{i=1}^d c_i, \quad c_{I, \rm int} =\bigoplus_{J \subseteq I, J \neq \varnothing}  \{ (-1)^{|I \setminus J|}\} \odot c_{J}.
\]

Now observe that by a change of variables, one has that for any set $I \subsetneq D$ with $|I| \ge 2$, the $I$-th geometric margin $(c \circ \boldsymbol{F})_I$ of $c \circ \boldsymbol{F}$ is given by
\[
(c \circ \boldsymbol{F}) = \exp \left\{ \int_{\Omega_{D \setminus I}} \ln (c \circ \boldsymbol{F}) d \pi_{D \setminus I} \right\} = c_I\circ \boldsymbol{F},
\]
where for any $(x_1,\ldots, x_d) \in \Omega$, $c_I\circ \boldsymbol{F}(x_1,\ldots, x_d) = c_I \{F_1(x_1),\ldots, F_d(x_d) \}$. Proposition~\ref{prop:4.2} then directly implies the following result.

\begin{proposition}
\label{prop:5.1}
Suppose that an absolutely continuous $d$-variate random vector $\boldsymbol{X}$ with univariate margins $F_1, \ldots, F_d$ and unique copula density $c$ satisfies Assumption~\ref{ass:5.2}. Then the map $c\circ \boldsymbol{F}$ defined by Eq.~\eqref{eq:5.2} can be decomposed as
\[
c\circ \boldsymbol{F} = c_{\rm ind}\circ \boldsymbol{F} \oplus \bigoplus_{I \subseteq D , |I| \ge 2} c_{I, \rm int} \circ \boldsymbol{F}.
\]
\end{proposition}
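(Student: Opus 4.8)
The plan is to realize the composition map $g \mapsto g \circ \boldsymbol{F}$ as an isometric isomorphism between the two Bayes spaces at play and then transport the decomposition of Proposition~\ref{prop:4.2} across it. Under Assumption~\ref{ass:5.2}, each $F_i$ is continuous and, because $f_i > 0$ on $\Omega_i$, strictly increasing there, hence a bijection of $\Omega_i$ onto $(0,1)$ (up to boundary points, a $\lambda_i$-null set) with inverse $F_i^{-1}$; moreover $F_i$ pushes the probability measure $\pi_{X_i}$ forward to Lebesgue measure on $(0,1)$. Therefore $\boldsymbol{F} = (F_1, \ldots, F_d)$ pushes $\pi_\perp$ forward to the Lebesgue measure $\lambda$ on $[0,1]^d$, and the change-of-variables formula shows that $\Psi \colon g \mapsto g \circ \boldsymbol{F}$ is a well-defined bijection from $\mathcal{B}^2(\lambda)$ onto $\mathcal{B}^2(\pi_\perp)$ which preserves the scalar product \eqref{eq:2.3}. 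Being a composition, $\Psi$ also satisfies $\Psi(g \oplus h) = \Psi(g) \oplus \Psi(h)$ and $\Psi(\alpha \odot g) = \alpha \odot \Psi(g)$; in other words, $\Psi$ is a unitary vector-space isomorphism.

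The second step is to verify that $\Psi$ intertwines the geometric-margin operators. For any non-empty $I \subseteq D$ and $g \in \mathcal{B}^2_I(\lambda)$, the function $\Psi(g) = g \circ \boldsymbol{F}$ depends only on the coordinates $x_i$ with $i \in I$, so $\Psi$ maps $\mathcal{B}^2_I(\lambda)$ into $\mathcal{B}^2_I(\pi_\perp)$, and the inverse change of variables shows that this image is all of $\mathcal{B}^2_I(\pi_\perp)$. Since $\Psi$ is a unitary isomorphism carrying one complete subspace onto the other, it maps orthogonal projections to orthogonal projections, so by Proposition~\ref{prop:4.1} one gets $(g \circ \boldsymbol{F})_I = g_I \circ \boldsymbol{F}$ for every such $I$. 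This is the identity already recorded just before the statement, which can equally be obtained directly: by the change of variables and Fubini's theorem,
\[
(c \circ \boldsymbol{F})_I = \exp \Big\{ \int_{\Omega_{D \setminus I}} \ln (c \circ \boldsymbol{F})\, d \pi_{D \setminus I} \Big\} = c_I \circ \boldsymbol{F}.
\]

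Finally, I would apply Proposition~\ref{prop:4.2} to the element $c \circ \boldsymbol{F}$ of $\mathcal{B}^2(\pi_\perp)$, which gives
\[
c \circ \boldsymbol{F} = (c \circ \boldsymbol{F})_{\rm ind} \oplus \bigoplus_{I \subseteq D,\ |I| \ge 2} (c \circ \boldsymbol{F})_{I, \rm int},
\]
and then rewrite each term using the second step together with the fact that $\Psi$ respects $\oplus$ and $\odot$:
\[
(c \circ \boldsymbol{F})_{\rm ind} = \bigoplus_{i=1}^d (c \circ \boldsymbol{F})_i = \bigoplus_{i=1}^d (c_i \circ \boldsymbol{F}) = \Big( \bigoplus_{i=1}^d c_i \Big) \circ \boldsymbol{F} = c_{\rm ind} \circ \boldsymbol{F},
\]
and, in the same way, $(c \circ \boldsymbol{F})_{I, \rm int} = \bigoplus_{J \subseteq I,\ J \neq \varnothing} \{ (-1)^{|I \setminus J|} \} \odot (c_J \circ \boldsymbol{F}) = c_{I, \rm int} \circ \boldsymbol{F}$ for each $I$ with $|I| \ge 2$. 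Substituting these into the displayed decomposition yields the assertion. The only genuinely delicate point is the first step, namely setting up the change of variables cleanly and checking that $\Psi$ is a measure-preserving (hence scalar-product-preserving) bijection; once Assumption~\ref{ass:5.2} is granted this is routine, and all subsequent steps are formal manipulations of a decomposition already established in Section~\ref{sec:4}.
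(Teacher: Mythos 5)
Your argument is correct and follows essentially the same route as the paper: the key step in both is the change-of-variables identity $(c \circ \boldsymbol{F})_I = c_I \circ \boldsymbol{F}$, after which Proposition~\ref{prop:4.2} applied to $c \circ \boldsymbol{F}$ in $\mathcal{B}^2(\pi_\perp)$ yields the claim. The isometry $\Psi$ you set up as scaffolding is the map the paper itself introduces (as $\varphi^{-1}$) in Section~\ref{sec:5.3}, so this is added rigour rather than a different proof.
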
 

What Proposition~\ref{prop:5.1} effectively means is that when working with the reference measure $\pi_\perp$, the decomposition $c \circ \boldsymbol{F}$ of the density of $\boldsymbol{X}$ with respect to $\pi_\perp$ can be computed in two steps, in the same spirit as Sklar's representation theorem.
\begin{itemize}
\item [] \textbf{Step 1:}
Derive the orthogonal decomposition of the density $c$ of the random vector $\boldsymbol{U} = (F_1 (X_1),\ldots, F_d (X_d))$ whose distribution is the copula $C$ of $\boldsymbol{X}$. 
\item [] \textbf{Step 2:}
Deduce the orthogonal decomposition of the density of $\boldsymbol{X}$ by injecting its marginal distributions $F_1, \ldots, F_d$ into the decomposition of $c$. 
\end{itemize}

The advantage of working with copulas is thus two-fold. First, the requirement that $\boldsymbol{X}$ has a bounded support is not needed given that the Lebesgue measure is a probability measure on $[0,1]^d$. Second, the copula $C$ does not depend on the individual behavior of the components $X_1, \ldots, X_d$ of $\boldsymbol{X}$. 

This is illustrated below for the multivariate Gaussian copula. 

\begin{example}
\label{ex:5.2}
As an extension of Example~\ref{ex:5.1} to arbitrary dimension $d \ge 2$, consider a uniform random vector $\boldsymbol{U} = (U_1, \ldots, U_d)^\top$ whose distribution is the Gaussian copula with invertible $d \times d$ correlation matrix~$\Sigma$. If $I_d$ denotes the identity matrix in dimension $d$, the joint density of $\boldsymbol{U}$ is given, for all $u_1, \ldots, u_d \in (0,1)$, by  
\[
c_{\Sigma} (u_1 ,\dots, u_d) = {\rm det} (2 \pi \Sigma)^{-1/2} \exp \left( -\frac{1}{2} \, \boldsymbol{x}^\top A \boldsymbol{x} \right),
\]
where $A = \Sigma^{-1} - I_d$ and $\boldsymbol{x} = (x_1, \ldots, x_d)^\top = (\Phi^{-1} (u_1), \ldots, \Phi^{-1} (u_d))^\top$ with $\Phi^{-1}$ standing for the quantile function of the $\mathcal{N} (0,1)$ distribution, as in Example~\ref{ex:5.1}. 

A straightforward change of variables implies that, up to an additive constant,
\begin{align*}
\int_{[0,1]^d} \ln \{ c_\Sigma (u_1, \ldots, u_d) \} du_1 \cdots du_d & = -\frac{1}{2} \int_{\mathbb{R}^d} \boldsymbol{x}^\top A \, \boldsymbol{x} \left( \prod_{i=1}^d e^{-x_i^2/2} \right) dx_1 \cdots dx_d \\
& = - \frac{1}{2} \, {\rm E} (\boldsymbol{X}^\top A\boldsymbol{X}),
\end{align*}
where the components of the random vector $\boldsymbol{X} = (X_1, \ldots, X_d)^\top$ are mutually independent and $\mathcal{N} (0, 1)$. Accordingly, the above integral reduces to
\[
- \frac{1}{2} \, \mathrm{tr} [ \mathrm{E} \{ \boldsymbol{X}^\top A \, \boldsymbol{X} \}] = - \frac{1}{2} \, \mathrm{tr} (A),
\]
and hence the clr transform of $c_\Sigma$ is given, for all $u_1, \ldots, u_d \in (0, 1)$, by
\[
\clr(c_\Sigma)(u_1,\ldots, u_d) = - \frac{1}{2} \, \boldsymbol{x}^\top A \, \boldsymbol{x} + \frac{1}{2} \, \mathrm{tr} (A).
\]

Letting $A = (a_{ij})$, one can also check easily that for any $i \in D$, one has
\[
\mathrm{E} ( \boldsymbol {X}^\top A \boldsymbol {X} \mid X_i) = a_{ii} \{ \Phi^{-1}(u_i)\}^2 + \mathrm{tr} (A) - a_{ii},
\]
and for arbitrary nonempty set $I \subsetneq D$ and $X_I = \{ X_i: i \in I \}$, one also has
\[
\mathrm{E} (\mathbf{X}^\top A \mathbf{X} \mid X_I) = \sum_{i \notin I} a_{ii} \{\Phi^{-1}(u_i)\}^2 + 2 \sum_{i \notin I}\sum_{j \in I} \Phi^{-1}(u_i)\Phi^{-1}(u_j)
+ \sum_{i \notin I} a_{ii}.
\]
It follows that, for every distinct $i, j \in D$ and for all $u_i, u_j \in [0,1]$, one has
\[
\clr(c_i)(u_i) = -\frac{1}{2} \, [ a_{ii} \{ \Phi^{-1} (u_i) \}^2 - a_{ii} ],
\]
and $\clr(c_{i,j})(u_i, u_j) = -a_{ij}\Phi^{-1} (u_i) \Phi^{-1} (u_j)$, while for arbitrary integer $k \in \{ 3, \ldots, d\}$ and $1 < i_1 \leq \cdots \leq i_k < d$, $\clr(c_{i_1, \ldots, i_k}) \equiv 0$.

This result could perhaps be expected, given that the dependence structure of the random vector $\boldsymbol{U}$ is completely characterized by the pair-wise interactions between its components. See, e.g., \cite{Genest/etal:2007}, for an analogous finding concerning the asymptotic power of rank-based tests of multivariate independence constructed from combinations of Cram\'er--von Mises statistics derived from a M\"obius decomposition of the empirical copula process.
\end{example}

\subsection{Connection with Fr\'echet spaces\label{sec:5.3}}

Working with the reference measure $\pi_\perp$ as was done in Section~\ref{sec:5.2} makes sense for all $d$-variate random vectors $\boldsymbol{X}$ sharing the same univariate marginals $F_1,\ldots, F_d$. The class of all such random vectors is called a Fr\'echet class \citep{Joe:1997}. 

Within the Fr\'echet class with margins $F_1, \ldots, F_d$, let $\mathcal{F}$ denote those distributions that satisfy Assumption~\ref{ass:5.2}. The set of all densities of elements in this class, viz.
\[
\mathcal{B}^2_{\mathcal{F}}(\pi) = \Bigl\{ f \in \mathcal{B}^2(\pi) \; {\Bigl |} \; \int_{\Omega_{D\setminus\{i\}}} f d \pi_{D\setminus \{i\}} \prop2 1 \Bigr\},
\]
is then a subset of the Bayes space $\mathcal{B}^2(\pi)$. Indeed, the requirement 
\[
\int_{\Omega_{D\setminus\{i\}}} f d \pi_{D\setminus \{i\}} \prop2 1
\]
guarantees that elements in $\mathcal{B}^2_{\mathcal{F}}(\pi)$ are in fact proportional to probability distributions in $\mathcal{F}$ whose copula $C$ has a $\lambda$-density $c$ which is positive almost everywhere on $[0,1]$ and is such that $\ln(c)$ is square integrable on $[0,1]^d$. 

Moreover, the set $\mathcal{B}^2_{\mathcal{F}}(\pi)$ is in one-to-one correspondence with the subset
\[
\mathcal{B}^2_{\mathcal{C}} (\lambda) = \Bigl\{ f \in \mathcal{B}^2(\lambda) \; {\Bigl |} \; \int_{[0,1]^{d-1}} f d \lambda_{D\setminus \{i\}} \prop2 1 \Bigr\}
\]
of the Bayes space $\mathcal{B}^2(\lambda)$ on $[0,1]^d$ with reference measure $\lambda$. This correspondence is ensured by the map $\varphi : \mathcal{B}^2_{\mathcal{F}}(\pi) \to \mathcal{B}^2_{\mathcal{C}} (\lambda)$ defined by
\[
f \mapsto \varphi(f) = f(F_1^{-1}, \ldots, F_d^{-1}). 
\]
This map has an inverse $\varphi^{-1}: \mathcal{B}^2_{\mathcal{C}} (\lambda) \to \mathcal{B}^2_{\mathcal{F}}(\pi)$ given by $c \mapsto  \varphi^{-1}(c) = c(F_1,\ldots, F_d)$. This is because for each $i \in D$, the map $F_i$ is continuous and strictly increasing on $\Omega_i$, so that one has $F_i \circ F_i^{-1} (u) = u$ for all $u \in [0,1]$ as well as $F_i^{-1}\circ F_i(x) = x$ for all $x \in \Omega_i$.  It is easily seen that $\varphi$ is in fact an isometry.

\section{Conclusions and outlook\label{sec:6}}

\cite{Hron/etal:2020} have recently opened new perspectives for dependence modeling in a functional analysis context by introducing an orthogonal decomposition of bivariate probability densities into an independent and a dependent part. Their work, which is cast in the theory of Bayes spaces laid out by~\cite{Egozcue/etal:2006} and \cite{Boogaart/etal:2010, Boogaart/etal:2014}, takes its roots in the foundational work of \cite{Aitchison:1982} on the analysis of compositional data. 

These efforts were further expanded here by interpreting the components in the orthogonal decomposition of \cite{Hron/etal:2020} as projections, and by showing how a decomposition formula due to \cite{Kuo/etal:2010} can be used to extend these findings to the multivariate case. Any strictly positive $d$-variate density whose logarithm is square-integrable with respect to a finite reference measure could then be decomposed in a unique way into a direct sum of $2^d - 1$ mutually orthogonal projections. 

The terms in the expansion derived here, which are inspired by the Hoeffding--Sobol and M\"obius decomposition formulas \citep{Mercadier/etal:2022}, are analogous to main effects, called geometric margins by \cite{Hron/etal:2020}, and interactions of all possible orders in analysis of variance. A connection was also made between this approach and the decomposition theorem of \cite{Sklar:1959} which is at the root of the popular copula modeling methodology.

The results reported in this paper provide an analytical framework within which multivariate probability densities viewed as data objects could be modeled. As already transpired from the work of \cite{Hron/etal:2020}, this approach looks quite promising for the acquisition of new theoretical insights and the design of innovative inferential methods for functional data analysis of dependence structures. The few examples discussed herein provide a proof of concept in the multivariate case. 

Access to efficient computational techniques adapted to the multivariate Bayes space framework will be a key element to the future successful deployment of this methodology. The B-spline representation used by \cite{Hron/etal:2020} to represent the clr transformation of bivariate densities seems well suited to the task and efforts are currently underway to extend it to the multivariate context.  

\section*{Acknowledgments}

Funding in support of this work was provided by the Canada Research Chairs Program (Grant 950--231937), the Natural Sciences and Engineering Research Council of Canada (Grants RGPIN--2016--04720 and RGPIN-2022-03614), and the Czech Science Foundation (Grantov\'a Agentura \v{C}esk\'e Republiky, Grant  22-15684L). Part of this work was completed while the second author visited the Centre de recherches math\'ematiques (CRM) in Montr\'eal (Qu\'ebec, Canada) in June 2022 as part of the CRM--Simons Scholar-in-Residence program.


\bibliographystyle{apalike}
\bibliography{ref}

\end{document}